\theoremstyle{plain}
\newtheorem{thm}{Theorem}[section] 
\newtheorem{theorem}[thm]{Theorem}
\newtheorem{lem}[thm]{Lemma}
\newtheorem{lemma}[thm]{Lemma}
\newtheorem{cor}[thm]{Corollary}
\newtheorem{prop}[thm]{Proposition}
\newtheorem{proposition}[thm]{Proposition}
\newtheorem*{claim*}{Claim} 
\newtheorem*{thm*}{Theorem}
\theoremstyle{definition}
\theoremstyle{remark}
\newtheorem{rmk}[thm]{Remark}
\newtheorem{remark}[thm]{Remark}
\newtheorem{example}[thm]{Example}
\newtheorem{question}[thm]{Question}
\newcommand{\Gl}{\operatorname{GL}}
\newcommand{\Mat}{\operatorname{M}}
\newcommand{\Mor}{\operatorname{Mor}}
\newcommand{\tensor}{\otimes}
\newcommand{\into}{\hookrightarrow}
\newcommand{\Gm}{{\mathbb{G}_m}}
\newcommand{\Ga}{{\mathbb{G}_a}}
 \newcommand{\Rank}{\operatorname{Rank}}
\renewcommand{\P}{\operatorname{P}}
\newcommand{\sh}[1]{\mathcal{#1}}
\newcommand{\weq}{\simeq}
\newcommand{\GL}{\Gl}
\newcommand{\SO}{\operatorname{SO}}
\newcommand{\Hoh}{\mathrm{H}}
\newcommand{\id}{\mathrm{id}}
\newcommand{\isom}{\cong}
\newcommand{\iso}{\isom}
\newcommand{\mto}[1]{\stackrel{#1}{\longrightarrow}}
\newcommand{\isomto}{\mto{\isom}}
\newcommand{\sm}{-}
\newcommand{\C}{\mathbb{C}}
\newcommand{\Q}{\mathbb{Q}}
\newcommand{\Z}{\mathbb{Z}}
\newcommand{\ZZ}{\Z}
\newcommand{\PP}{\mathbb{P}}
\newcommand{\CC}{\C}
\newcommand{\QQ}{\Q}
\newcommand{\bd}{\partial}
\newcommand{\Aut}{\operatorname{Aut}}
\newcommand{\Aone}{{{\mathbb A}^1}}
\newcommand{\Ker}{\operatorname{Ker}}
\newcommand{\nullity}{\operatorname{null}}
\newcommand{\Gr}{\operatorname{Gr}}
\newcommand{\Kdim}{\operatorname{Kdim}}
\newcommand{\bbP}{\mathbb P}
\renewcommand{\P}{\mathbb P}
\newcommand{\Spec}{\operatorname{Spec}}
\newcommand{\Span}{\operatorname{Span}}
\newcommand{\A}{\mathbb A}
\renewcommand{\phi}{\varphi}
\newcommand{\on}[1]{\operatorname{#1}}
\newcommand{\PGL}{\operatorname{PGL}}
\newcommand{\catC}{{\mathscr{C}}}
\newcommand{\gen}{\operatorname{gen}}
\newcommand{\Floor}[1]{\left\lfloor{#1}\right\rfloor}
\renewcommand{\le}{\leqslant}
\renewcommand{\ge}{\geqslant}
\renewcommand{\theta}{\vartheta}
\newcommand{\benw}[2][]{\todo[linecolor=Green,backgroundcolor=Green!25,bordercolor=Green,#1]{#2---Ben W.}}
\numberwithin{equation}{section}
\author{Uriya A. First}
\address
{Department of Mathematics \\
Faculty of Natural Sciences, University of Haifa, 
Mount Carmel, Haifa, 3498838 \\
Israel}
\email{ufirst@univ.haifa.ac.il}
\author{Zinovy Reichstein}
\address{Department of Mathematics\\ University of British Columbia\\ Vancouver BC V6T 1Z2\\ Canada}
\email{reichst@math.ubc.ca} \email{tbjw@math.ubc.ca}
\author{Ben Williams}
\thanks {The second and third authors have been partially 
supported by 
Discovery Grants from the Natural Sciences and Engineering Research Council of Canada.}
\subjclass[2020]{Primary
14L30, 
16H05, 
16S15, 
14F25, 
55R40. 
Secondary 
17A36, 
13E15, 
20G10, 
14M17. 
}
\keywords{Algebra over a ring, multialgebra, Azumaya algebra, number of generators, versal torsor, classifying space, equivariant cohomology}
\begin{document}
\frenchspacing

\title[The number of generators of an algebra]{On the number of generators of an algebra over a commutative ring}

\begin{abstract} A theorem of O.~Forster says that if $R$ is a noetherian ring of Krull dimension $d$, then every projective $R$-module of rank $n$ can be generated by $d+n$ elements. S.~Chase and R.~Swan subsequently showed that this bound is sharp: there exist examples that cannot be generated 
by fewer than $d+n$ elements. We view rank-$n$ projective $R$-modules as \emph{$R$-forms} of the non-unital $R$-algebra $R^n$ where the product of any two elements  
is $0$. The first two authors generalized Forster's theorem to forms of other algebras (not necessarily commutative, associative or unital); 
A. Shukla and the third author then showed that this generalized Forster bound is optimal for \'etale algebras.

In this paper, we prove new upper and lower bounds on the number of generators of an $R$-form of a $k$-algebra, 
where $k$ is an infinite field and $R$ is a finitely generated $k$-ring of Krull dimension $d$. In particular, we show that, 
contrary to expectations, for most types of algebras, the generalized Forster bound is far from optimal. Our results 
are particularly detailed in the case of Azumaya algebras. Our proofs are based on reinterpreting the problem as a question about
approximating the classifying stack $BG$, where $G$ is the automorphism group of the algebra in question, by algebraic spaces of 
a certain type.
\end{abstract}

\maketitle


\section{Introduction} 

Throughout this paper $k$ will denote a base field, $A$ a finite-dimensional $k$-algebra, $R$ a $k$-ring, and $B$ a
finitely generated $R$-algebra.  By a \textit{ring} we will always mean a commutative associative unital ring. In
contrast, an \textit{algebra} will not be required to be commutative, associative, or unital.  We write $\gen_R(B)$ for
the smallest cardinality of a 
set of generators of $B$ as an $R$-algebra.

Recall that an \textit{$R$-form} of $A$ is an $R$-algebra $B$ such that there exists 
a faithfully flat $R$-ring $S$ for which $A \otimes_k S \cong B \otimes_R S$ as $S$-algebras.
Many important classes of algebras arise as forms of particular $k$-algebras.
For example: 
\begin{enumerate} 
	\item[(i)] $R$-forms of $A = k^n$ with the zero multiplication are projective $R$-modules of rank $n$.
	\item[(ii)] $R$-forms of $A = k^n$ with componentwise multiplication 
	are finite \'etale $R$-algebras of rank $n$. 
	\item[(iii)] $R$-forms of $A = \Mat_s(k)$ are Azumaya algebras of degree $s$ over $R$.
	\item[(iv)] $R$-forms of the split octonion $k$-algebra are octonion algebras over $R$.
\end{enumerate}

\smallskip
O.~Forster~\cite{forster} showed that a projective module of rank $n$ over 
a noetherian ring $R$ can be generated by $\Kdim R + n$ elements, where
$\Kdim R $ denotes the Krull dimension of $R$; see also \cite{swan}.
In~\cite[Theorem~1.2 and Proposition~4.1]{fr}, the first two authors extended this result to algebras as follows.

\begin{thm}[Generalized Forster bound]\label{thm.fr} 
Assume $k$ is infinite. Let $A$ be a finite-dimensional $k$-algebra. If
$R$ is a noetherian $k$-ring and $B$ is an $R$-form of $A$, then
$\gen_R(B) \leqslant \Kdim R + \gen_k(A)$.
\end{thm}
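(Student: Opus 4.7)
The plan is to adapt the Forster--Swan bound on the generation of modules to the algebra setting. Set $d = \Kdim R$, $N = \dim_k A$, and $n = \gen_k(A)$. The argument naturally splits into two parts: a fiber-wise bound showing that $\gen_{\kappa(p)}(B \otimes_R \kappa(p)) \leq n$ at every prime $p \in \Spec R$, followed by a globalization step producing $n+d$ generators by an iterative basic-elements construction in the style of Swan.

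For the fiber-wise bound, consider the open $k$-subscheme $\operatorname{Gen}_n(A) \subset A^n$ of $n$-tuples that generate $A$ as a $k$-algebra. This is non-empty by the definition of $n$, and is stable under the linear action of $G = \Aut_{k\text{-alg}}(A)$ on $A^n$. Since $B$ is an $R$-form of $A$, it is classified by a $G$-torsor $T \to \Spec R$, and $B^n$ is identified with the rank-$nN$ vector bundle $A^n \times^G T$ over $\Spec R$; accordingly, $\operatorname{Gen}_n(B) := \operatorname{Gen}_n(A) \times^G T$ is an open $R$-subscheme of $B^n$. At each $p \in \Spec R$, the fiber of this open is a Zariski open in $(B \otimes_R \kappa(p))^n \cong \A^{nN}_{\kappa(p)}$, non-empty because the $G$-torsor trivializes after passage to an algebraic closure $\overline{\kappa(p)}$ and $\operatorname{Gen}_n(A \otimes \overline{\kappa(p)})$ is non-empty by hypothesis. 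Since $\kappa(p)$ is an extension of the infinite field $k$, it is itself infinite, so this non-empty open in affine space admits a $\kappa(p)$-rational point. Therefore $B \otimes_R \kappa(p)$ is generated by $n$ elements as a $\kappa(p)$-algebra.

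To globalize, I would construct $b_1, \ldots, b_{n+d} \in B$ one at a time, paralleling Swan's proof of Forster--Swan. The invariant to maintain is: after the $r$-th stage, the $R$-subalgebra $R\langle b_1, \ldots, b_r\rangle$ coincides with $B$ modulo every prime $p$ with $n + \dim \overline{\{p\}} \leq r$. The inductive step rests on an algebra analog of the Eisenbud--Evans--Swan basic-element lemma. At a prime $p$ where the current subalgebra is a proper subalgebra of $B(p) := B \otimes_R \kappa(p)$, the locus of $b \in B(p)$ failing to enlarge the subalgebra generated by the previous $b_i \bmod p$ together with $b$ is contained in a proper closed $\kappa(p)$-subvariety of $B(p) \cong \A^N_{\kappa(p)}$ --- proper because the fiber-wise bound guarantees the existence of elements which do enlarge it --- hence of codimension at least one. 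A generalized prime-avoidance argument over the noetherian stratification of $\Spec R$ by dimension of closure then produces a global $b_{r+1} \in B$ enlarging the subalgebra at every prime of the appropriate closure dimension simultaneously. After $n+d$ iterations, the subalgebra agrees with $B$ modulo every prime of $\Spec R$, and Nakayama upgrades this to an equality of $R$-modules, hence of $R$-algebras.

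The main obstacle is the globalization step: one must formulate and prove the algebra version of Swan's basic-element lemma, establish a codimension estimate for the ``non-enlarging'' locus uniformly as the subalgebra $R\langle b_1, \ldots, b_r\rangle$ varies across primes, and execute the prime avoidance inside the twisted vector bundle $B$ rather than in a plain affine $R$-space. By comparison, the fiber-wise bound of the first step follows softly from the $G$-equivariance of $\operatorname{Gen}_n(A)$ together with the infinitude of the residue fields of $R$.
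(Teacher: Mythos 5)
This is not the paper's route. The paper imports the noetherian statement from the earlier work of the first two authors and, in Section~\ref{sect.forster}, reproves only the finite-type variant (Theorem~\ref{thm.fr1}) by a completely different mechanism: the codimension estimate $c_A(r) \geqslant r - \gen_k(A) + 1$ for the non-generating locus $Z_r$ (Proposition~\ref{prop:cAr-increasing}), fed into the $d$-versality of $U_r$ (Proposition~\ref{prop.uriya's-conjecture}) and the torsor dictionary (Proposition~\ref{prop.equivariant-map-corres}). What you propose is instead the Forster--Swan local-to-global strategy of the original source \cite{fr}, which is indeed the way to reach the full noetherian statement. Your fibre-wise step is correct: the generation locus in $A^n$ is a nonempty $G$-stable open subscheme of affine space, it descends along the torsor to an open subscheme of the affine space $(B\otimes_R\kappa(p))^n$ that is nonempty over $\overline{\kappa(p)}$, and the infinitude of $\kappa(p)$ supplies a rational point.

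The globalization, however, has a genuine gap: the inductive invariant you propose cannot be maintained. At stage $r=n$ it requires $R\langle b_1,\dots,b_n\rangle$ to surject onto $B\otimes_R\kappa(\mathfrak m)$ for every maximal ideal $\mathfrak m$ (these are exactly the primes with $\dim\overline{\{p\}}=0$); since $B/R\langle b_1,\dots,b_n\rangle$ is a finitely generated module over a noetherian ring, Nakayama then forces $R\langle b_1,\dots,b_n\rangle=B$, so your invariant at stage $n$ is already equivalent to $\gen_R(B)\leqslant n$ --- false in general by the Chase--Swan examples. The same defect recurs at every later stage: the primes newly constrained at stage $r+1$ (those in the support of $B/C_r$ with $\dim\overline{\{p\}}=r+1-n$) form an infinite set in general, they need not be minimal primes of that support, and the invariant gives no lower bound on the image of $C_r$ in their fibres, so no single $b_{r+1}$ can be expected to complete $C_r(p)$ to all of $B(p)$ at each of them (if $C_r(p)$ is small and $n\geqslant 2$, no one element does). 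Your observation that the ``non-enlarging locus'' is a proper closed subset is true but insufficient: enlarging the subalgebra by one step is not the same as finishing it. What is needed is the quantitative Eisenbud--Evans--Swan bookkeeping adapted to algebras --- a basicness condition recording, at \emph{every} prime, how many further general elements are needed to complete $C_r(p)$ to $B(p)$, together with semicontinuity of that count and prime avoidance at the finitely many minimal primes of each critical stratum. That machinery is the actual content of the proof in \cite[Section~3]{fr}, and your sketch does not supply a substitute for it.
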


\medskip
In case (i) above, Theorem~\ref{thm.fr} specializes to Forster's bound, since in this case $R$-forms of $A$ are projective $R$-modules of rank $n$.
Forster's bound was shown to 
be optimal by S.~Chase and R.~Swan~\cite[Theorem~4]{swan_counterex}. More precisely, they showed that for every $n \geqslant 1$ and $d\geqslant 0$
there exist a $d$-dimensional ring $R$ of finite type (i.e., generated by finitely many elements) over the field of real numbers 
and a projective $R$-module $B$ of rank $n$ such that $\gen_R(B) = d + n$.

In case (ii), Theorem~\ref{thm.fr} shows that $\gen(B) \leqslant \Kdim R + 1$ for every finite \'etale algebra $B$ over $R$. 
This upper bound was recently shown to be optimal by~A.~Shukla and the third author~\cite{Shukla2020}. 
Specifically, for every pair of integers $n \geqslant 2$ and $d \geqslant 0$, they constructed a finitely generated ring $R$ 
over the field of real numbers, of dimension $d$, and a finite \'etale $R$-algebra $B$ of rank $n$ such 
that $\gen_R(B) = d + 1$.\footnote{A similar example was independently found by M.~Ojanguren (2017, unpublished).} 

In this paper we will focus on the case where $R$ is of finite type  over an infinite field $k$
and will address the following question:

\begin{question} \label{q.main} Is the generalized Forster bound of Theorem~\ref{thm.fr} 
optimal for every finite-dimensional $k$-algebra $A$?
\end{question}

We will show that the answer to this question is ``no", contrary to what one might expect based 
on~\cite{swan_counterex} and~\cite{Shukla2020}. In particular, we will show that the following bounds
given by Theorem~\ref{thm.fr}, 
\begin{enumerate}
\item[(iii)] $\gen_R(B) \leqslant \Kdim R + 2$ for every Azumaya algebra $B$ over  $R$  
and 
\item[(iv)] $\gen_R(B)\leqslant \Kdim R + 3$ for every octonion algebra $B$ over $R$;
\end{enumerate}
(see \cite[Corollary~4.2]{fr}), are far from optimal.

Our first main result is:

\begin{thm} \label{thm.main2} 
Assume that $k$ is an infinite field, $A$ is an $n$-dimensional $k$-algebra, and $R$ is a finite type $k$-ring of Krull dimension $d$. 
Let $\overline{k}$ be an algebraic closure of $k$ and set $\overline{A}=A\otimes_k \overline{k}$. 
If the maximal dimension of a proper $\overline{k}$-subalgebra of $\overline{A}$ is $n_{\rm max}$, then
\[ \gen_R(B) \leqslant \Floor{ \frac{d}{n-n_{\rm max}}} + n_{\rm max} + 1 \]
for any $R$-form $B$ of $A$.
\end{thm}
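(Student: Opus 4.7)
The plan is to recast the problem as one about sections of a twisted vector bundle on $\Spec R$. Let $G = \underline{\Aut}_k(A)$ be the algebraic group of $k$-algebra automorphisms of $A$; by faithfully flat descent, $R$-forms $B$ of $A$ correspond to $G$-torsors $P \to \Spec R$. Because $G$ acts $k$-linearly on $A^m$ (diagonally, through $G \hookrightarrow \GL(A)$), twisting produces a rank-$nm$ vector bundle $\calV := P \times^G A^m$ over $\Spec R$, whose global sections are in natural bijection with $m$-tuples $(b_1,\ldots,b_m) \in B^m$. The open subscheme $U_m \subseteq A^m$ of $m$-tuples that generate $A$ as a $k$-algebra is $G$-stable, with twisted counterpart $\calU_m := P \times^G U_m \subseteq \calV$. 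Finding $m$ algebra generators for $B$ over $R$ is thus equivalent to exhibiting a global section of $\calV$ factoring through $\calU_m$.

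Next, I would control the codimension of the closed subscheme $Z_m := A^m \setminus U_m$. After base change to $\overline{k}$, a point of $\overline{Z}_m$ is an $m$-tuple whose entries lie in some proper $\overline{k}$-subalgebra of $\overline{A}$, necessarily of some dimension $n' \le n_{\rm max}$. For each such $n'$, the $n'$-dimensional subalgebras of $\overline{A}$ form a closed subscheme of the Grassmannian $\Gr(n', \overline{A})$ (closedness under multiplication being a closed condition), of dimension at most $n'(n - n')$; the incidence fiber over any such $A'$ is $(A')^m$ of dimension $n'm$. Maximizing $n'(n+m-n')$ over $n' \in \{0,1,\ldots,n_{\rm max}\}$, with the maximum attained at $n' = n_{\rm max}$ in the relevant range of $m$, yields
\[
\dim(\overline{Z}_m) \;\le\; n_{\rm max}(n + m - n_{\rm max}),
\]
and hence
\[
\codim(Z_m, A^m) \;\ge\; (n - n_{\rm max})(m - n_{\rm max}).
\]
The twisted closed subscheme $\calZ_m := \calV \setminus \calU_m$ has the same fiberwise codimension in $\calV$.

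The last step is a moving-lemma for twisted vector bundles: over a finite-type affine $k$-scheme $\Spec R$ of Krull dimension $d$, with $k$ infinite, any vector bundle $\calV$ equipped with a closed subscheme $\calZ \subseteq \calV$ of fiberwise codimension at least $d+1$ admits a global section disjoint from $\calZ$. Applied to the pair $(\calV, \calZ_m)$, this produces a generating $m$-tuple for $B$ whenever
\[
(n - n_{\rm max})(m - n_{\rm max}) \;\ge\; d + 1.
\]
The arithmetic identity $\lceil (d+1)/e \rceil = \Floor{d/e} + 1$, valid for every positive integer $e$, then converts the smallest admissible $m$ into the bound $\Floor{d/(n-n_{\rm max})} + n_{\rm max} + 1$ asserted by the theorem.

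I expect the main obstacle to be packaging the moving lemma cleanly: it must apply to an arbitrary closed subscheme of a not-necessarily-trivial vector bundle with uniform fiberwise codimension, and the proof proceeds by a Serre/Forster-style induction on $\dim R$, with the infinitude of $k$ entering precisely to supply sufficiently generic $k$-rational linear combinations at each step. A secondary, milder obstacle is confirming that the subalgebra scheme inside $\Gr(n', \overline{A})$ really has dimension at most $n'(n-n')$; this is routine since it is cut out of the Grassmannian by closed conditions. The dimension-count/moving-lemma dichotomy is also essentially what is encoded by the authors' framework of ``approximating $BG$ by algebraic spaces'', since the quotient $[U_m/G]$ (on which $G$ acts freely, as an automorphism fixing a generating tuple is trivial) is an algebraic space, and having $m$ generators amounts to lifting $\Spec R \to BG$ through $[U_m/G] \to BG$.
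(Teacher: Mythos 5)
Your proposal is correct in outline and follows the same architecture as the paper's proof, with one substantive divergence in how the key lemma is established. The translation of ``$B$ is generated by $m$ elements'' into ``the twisted object maps into the twisted generating locus'' is Proposition~\ref{prop.equivariant-map-corres} (stated there via $G$-equivariant morphisms $T \to U_m$ from the torsor rather than sections of $P \times^G A^m$, but these are the same data by descent), and your estimate $\codim(Z_m, A^m) \geqslant (n - n_{\max})(m - n_{\max})$ for $m > n_{\max}$ is exactly Lemma~\ref{lem.main2}(b); the paper gets it by stratifying $Z_m$ according to the dimension of the \emph{linear span} of the tuple, fibred over the full Grassmannian, whereas you fibre over the scheme of subalgebras, but both give $\dim Z_m \leqslant n_{\max}m + n_{\max}(n - n_{\max})$ and the ensuing arithmetic is identical. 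The real crux is your ``moving lemma,'' which is the paper's Proposition~\ref{prop.uriya's-conjecture} ($d$-versality of $V - Z$) in bundle language. The paper proves it on the torsor: embed $T$ $G$-equivariantly into $\Mat^0_{n\times m}$ via Lemma~\ref{lem.gln2}, then apply Kleiman transversality to the right $\GL_m$-action, using the infinitude of $k$ to pick a rational point in the dense open set of good translates. Your proposed route --- a Forster/Eisenbud--Evans induction on $\Kdim R$ --- is the step you do not carry out, and for an arbitrary closed $\calZ$ with no determinantal structure the inductive step is delicate; but it can be replaced by a short genericity argument very close to what you already invoke: choose a surjection $\calO_R^M \twoheadrightarrow \calV$, pull $\calZ$ back to $\A^M \times \Spec R$ (fibrewise codimension is preserved, so the preimage has total dimension at most $M-1$), note that its projection to $\A^M$ is not dominant, and take the section induced by a generic constant vector $v \in k^M$, which exists because $k$ is infinite. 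This is the paper's generic translate $s \in \GL_m(k)$ in disguise. With that step supplied, your argument is complete and gives the stated bound $\Floor{d/(n - n_{\max})} + n_{\max} + 1$.
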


If $n_{\rm max} \leqslant n - 2$, as in examples (iii) and (iv) above, then Theorem~\ref{thm.main2} yields 
an asymptotically stronger upper bound on $\gen_R(B)$ than Theorem~\ref{thm.fr} (for large $d$).

Our second main result is a lower bound on $\gen_R(B)$ for certain rings $R$ and certain $R$-forms of $A$.
While this lower bound does not quite match the upper bound of Theorem~\ref{thm.main2}, 
it is not far off in the sense that both are linear in $d = \Kdim R$.

\begin{theorem} \label{thm.main3} Let $k$ be a field of characteristic $0$ and let $A$ be an $n$-dimensional $k$-algebra
  such that the group $G=\Aut_{k}(A)$ is not unipotent. Then there exists a positive integer $\rho_G$ with the following property:
  For any positive integer $d$, there exist a finite type $k$-ring $R$ of Krull dimension $d$ and an $R$-form $B$ of $A$ such that 
   \[ \gen_R(B) \geqslant  \Floor{\frac{d + 2 \dim G - \rho_G}{2n} } + 1. \]
Here $\rho_G$ depends only on $G$ and not on 
$A$. Moreover, 
writing $H$ for the quotient of the identity connected component of $G$ by its unipotent radical, we may
take $\rho_G=4$ if $\dim H>0$ and $\rho_G=2$ if $\dim Z(H)>0$.
\end{theorem}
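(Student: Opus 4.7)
The plan is to translate $\gen_R(B)\le m$ into a lifting condition for classifying morphisms, and then to produce a topological obstruction to the lifting. Set $G=\Aut_k(A)$, and for each integer $m\ge 0$ let $U_m \subseteq A^m$ denote the open $k$-subvariety of $m$-tuples that generate $A$ as a $k$-algebra; $G$ acts diagonally on $U_m$, and freely on a dense open subset. The stack morphism $U_m/G\to BG$ has the crucial property that, for an $R$-form $B$ of $A$ with classifying morphism $f_B\colon \Spec R \to BG$, one has $\gen_R(B)\le m$ if and only if $f_B$ factors through $U_m/G\to BG$. It therefore suffices, for each $d$, to exhibit a finite type $k$-ring $R$ of Krull dimension $d$ and an $R$-form $B$ whose classifying map does not so factor when $m = \Floor{(d + 2\dim G - \rho_G)/(2n)}$.

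To construct $R$ and $B$, I would choose a faithful linear representation $V$ of $G$ of sufficiently large dimension, let $W \subseteq V$ be the locus of free $G$-action, and set $X := W/G$; this is a smooth quasi-projective $k$-variety carrying the versal $G$-torsor $W \to X$. Inside $X$, pick a smooth affine subvariety $Y$ of dimension $d$ in sufficiently general position---for example, a generic complete intersection in a suitable compactification of $X$. Set $R := \calO(Y)$ and let $B$ be the restriction of the versal form; then $R$ is of finite type with $\Kdim R = d$, and $B$ is an $R$-form of $A$. After enlarging $k$ (which affects neither $\Kdim R$ nor $\gen_R(B)$) we may assume $k \subseteq \C$, and work with singular cohomology $H^*(-;\Lambda)$ of complex-analytic points for a suitable coefficient ring $\Lambda$.

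The sought obstruction is a class $\alpha \in H^e(BG;\Lambda)$ satisfying (i) its pullback to $(U_m/G)^{\an}$ vanishes, and (ii) its pullback to $Y^{\an}$ is nonzero. Since $U_m/G$ is a variety of complex dimension $mn-\dim G$, property (i) holds whenever $e > 2(mn - \dim G)$; since $Y$ is smooth affine of complex dimension $d$, the Andreotti--Frankel theorem gives $H^e(Y^{\an};\Lambda)=0$ for $e > d$, so for (ii) we must take $e \le d$. When $\dim Z(H)>0$, a nontrivial character of $Z(H)$ yields a class $c \in H^2(BG;\Z)$, and its power $c^j$ lies in $H^{2j}(BG;\Z)$; versality of the torsor on $X$ (arranged by choosing $V$ with $\codim(V\setminus W)$ large, so that $H^e(X) \cong H^e(BG)$ in the relevant range) together with a Bertini-type argument for $Y\subseteq X$ ensures $c^j|_Y \ne 0$ whenever $2j \le d$. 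Taking $2j = 2\lfloor d/2\rfloor \ge d-1$ yields the bound with $\rho_G=2$. When only $\dim H>0$, the smallest positive degree of a nontrivial characteristic class on $BH$ is $4$, yielding classes in degrees divisible by $4$, and the largest such that is $\leq d$ is $e = 4\lfloor d/4\rfloor \ge d - 3$, accounting for $\rho_G=4$. Combining (i) and (ii) gives $2mn < e + 2\dim G$, which for $e \ge d - \rho_G + 1$ yields $m < (d + 2\dim G - \rho_G + 1)/(2n)$, establishing the claim.

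\textbf{Main obstacle.} The principal technical hurdle is ensuring that the characteristic class survives both restrictions $H^*(BG) \to H^*(X) \to H^*(Y)$. The first is controlled by Totaro-style approximation: one must take $V$ with $\codim(V \setminus W)$ large enough that $H^e(BG) \to H^e(X)$ is an isomorphism in the degrees of interest. The second requires that $Y$ be cut out by a sufficiently generic complete intersection inside a compactification of $X$, so that weak-Lefschetz/Bertini arguments guarantee $H^e(X) \to H^e(Y)$ is injective for $e \le \dim Y$. A secondary subtlety is to choose the characteristic class intrinsically (from $BG$ or $BH$) so that $\rho_G$ depends only on $G$, not on $A$ or on $V$.
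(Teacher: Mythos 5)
Your overall strategy coincides with the paper's: interpret $\gen_R(B)\le m$ as a lifting of the classifying map through $U_m/G\to BG$, kill a candidate obstruction class on $U_m/G$ for dimension reasons (the quotient has real dimension $2mn-2\dim G$), produce a nonzero class in $\Hoh^e(BG)$ with $e$ in a window of width $\rho_G$ just below $d$, and realize it on a $d$-dimensional affine base cut out of a versal torsor; the final arithmetic also matches. But there are genuine gaps in three places. (1) You only treat $\dim Z(H)>0$ and $\dim H>0$; when $G$ is not unipotent but $H$ is a nontrivial \emph{finite} group the theorem still asserts that some $\rho_G$ exists, and this needs the fact that a nontrivial finite group has nonvanishing cohomology in degrees with bounded gaps (the paper cites Benson--Carlson). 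Relatedly, you assert but do not prove that the powers $c^j$ are nonzero: the paper gets non-nilpotence from $\Hoh^*(BG;\QQ)\iso \Hoh^*(BT;\QQ)^W$ sitting inside a polynomial ring, and obtains the degree-$2$ class not from a character of $Z(H)$ (which lives on $BZ(H)$, not on $BG$) but from the Serre spectral sequence of $BZ'\to BG\to BG^{\mathrm{ss}}$.

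(2) The reduction to $k\subseteq\CC$ is done in the wrong direction. An arbitrary characteristic-zero field need not embed in $\CC$, and ``enlarging $k$'' would in any case produce a ring over the larger field rather than over $k$; moreover base change can only \emph{decrease} $\gen$, so the equality you claim is not automatic. The correct move (Section 9 of the paper) is to descend $A$ to a finitely generated subfield $k_0\subseteq\CC$, build the example there, base-change up to $k$, and use the Lefschetz principle to transfer the lower bound. (3) Your $Y$ must be affine for $R=\calO(Y)$ to be a finite-type $d$-dimensional ring and for the Andreotti--Frankel vanishing and the Lefschetz step to apply; a generic complete intersection inside a compactification of the quasi-projective $X=W/G$ is not affine, and the statement you need ($\Hoh^e(X)\to\Hoh^e(Y)$ injective for $e\le\dim Y$) is the \emph{affine} Lefschetz hyperplane theorem, which requires first replacing $X$ by an affine variety via Jouanolou's device, exactly as in the paper's Theorem \ref{thm:produceCounterexample}. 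Each gap is repairable by the paper's own arguments, but as written the proposal does not close them.
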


 Theorems~\ref{thm.main2} and~\ref{thm.main3} remain valid in the more general setting where 
we allow $A$ and $B$ to be \emph{multialgebras}, rather than algebras. The same is true for most of the other results of this paper
where the structure of the algebra is not specified. Here by a \emph{multialgebra} over a ring $R$ we mean a module $B$ together 
with a (not necessarily finite) collection of $R$-multilinear maps $\{m_i:B^{n_i}\to B\}_{i\in I}$, where each $n_i$ is a non-negative 
integer.\footnote{This definition
of multialgebra is more restrictive than the one used in \cite{fr}.} For example, a usual binary (algebra) product on $B$ can be specified 
by an $R$-bilinear map $B^2\to B$, an involution by an $R$-linear map $B^1\to B$, and a unit element in $B$ by a map $B^0\to B$ (which is always 
multilinear). A multialgebra structure on $B$ naturally induces one on the tensor product $B \otimes_R S$ for any $R$-ring $S$. Subalgebras,
forms and generating sets of multialgebras are defined in the obvious manner. Viewing $B$ as a multialgebra is useful even in the case 
where $B$ is an $R$-algebra in the usual sense, with a unit element. Here 
we may view $B$ as a unital algebra or a non-unital algebra, and the multialgebra structure of $B$ allows us to distinguish between these two settings.
(See also Remark~\ref{rem.unital} below.)
Aside from usual algebras, unital and non-unital, the multialgebras of greatest interest to us will be algebras with involution.

Our third main result sharpens the upper and lower bounds of Theorems~\ref{thm.main2} and~\ref{thm.main3} in the case of Azumaya algebras.

\begin{theorem} \label{thm.azumaya} Let $k$ be an infinite field.
\begin{enumerate}
\item[(a)] Suppose $R$ is a finite type $k$-ring of Krull dimension $d$, and $B$ is an Azumaya $R$-algebra of degree $s$.
Then $\gen_R(B) \leqslant \Floor{\dfrac{d}{s - 1}} + 2$.
\item[(b)] On the other hand, if $\operatorname{char}(k) = 0$, then
  for any $s \ge 3$ and $d \ge 0$ there exists a finite type $k$-ring $R$ of Krull dimension $d$ 
  and an Azumaya $R$-algebra $B$ of degree $s$ such that $\displaystyle \gen_R(B) \geqslant  \Floor{\dfrac{d}{2(s-1)}} + 2$. 
\end{enumerate}
\end{theorem}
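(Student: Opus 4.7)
Both parts flow from the classifying-stack approximation for $G=\PGL_s$. Let $G$ act on $\Mat_s^r$ by simultaneous conjugation, and let $U_r\subseteq\Mat_s^r$ denote the open subset of $r$-tuples that generate $\Mat_s$ as a unital $k$-algebra. The action on $U_r$ is free (any $g\in\PGL_s$ centralizing a generating tuple must centralize $\Mat_s$, hence be trivial in $\PGL_s$), so $[U_r/G]$ is an algebraic space. An Azumaya $R$-algebra $B$ of degree $s$ admits $r$ generators if and only if its classifying morphism $f\colon\Spec R\to\B\PGL_s$ factors through $[U_r/G]$, and both bounds in the theorem hinge on $c_r := \codim_{\Mat_s^r}(\Mat_s^r\setminus U_r)$.

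\emph{Upper bound (a).} My plan is to prove $c_r=(r-1)(s-1)$ for $r\ge2$ and then apply the moving lemma underlying Theorem~\ref{thm.main2}. An $r$-tuple fails to generate iff it is contained in some proper unital subalgebra of $\Mat_s(\bar k)$. The maximal such subalgebras of largest dimension are the parabolics stabilizing either a line or a hyperplane in $\bar k^s$: each has dimension $n_{\max}=s^2-s+1$ and forms a family parameterized by a copy of $\P^{s-1}$. The corresponding locus of tuples in $\Mat_s^r$ has dimension $r(s^2-s+1)+(s-1)$, i.e., codimension exactly $(r-1)(s-1)$; a parallel computation for the parabolic of type $(p,s-p)$ gives codimension $(r-1)p(s-p)\ge(r-1)(s-1)$ for $1\le p\le s-1$, so no other stratum beats this. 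The moving lemma now guarantees that whenever $d<c_r$, a sufficiently generic section of the twisted bundle $B^r = P\times^{\PGL_s}\Mat_s^r$ (where $P$ is the $\PGL_s$-torsor corresponding to $B$) avoids the bad locus and generates $B$. Setting $r=\lfloor d/(s-1)\rfloor+2$ yields $c_r\ge d+1$, proving the upper bound.

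\emph{Lower bound (b).} My plan is to detect the failure of lifting via an \'etale-cohomology obstruction. The codimension-$(r-1)(s-1)$ complement of $[U_r/G]$ in $\B\PGL_s$ carries a characteristic class $\omega_r$ in degree $2(r-1)(s-1)$---essentially $\alpha^{(r-1)(s-1)}$, where $\alpha\in H^2_\et(\B\PGL_s,\mu_s)$ is the tautological Brauer class---which vanishes on $[U_r/G]$, so $f^*\omega_r=0$ whenever $B$ is $r$-generated. To obstruct this vanishing I would construct $\Spec R$ as a smooth affine open inside a product of low-dimensional $k$-varieties (e.g., smooth projective surfaces or Severi--Brauer varieties), each factor contributing one nontrivial summand of order $s$ to the Brauer class of $B$, and choose a degree-$s$ Azumaya algebra $B$ whose Brauer class is $\alpha=\alpha_1+\cdots+\alpha_M$. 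The K\"unneth formula produces a nonzero cross-term $\alpha_1\cdots\alpha_M$ in $H^{2M}_\et(R,\mu_s^{\otimes M})$, which forces $\alpha^{(r-1)(s-1)}\ne0$ as long as $M\ge(r-1)(s-1)$ and the ambient dimension satisfies $2(r-1)(s-1)\le d$. Combined with codimension-degree duality (algebraic cycles of codimension $c$ produce cohomology classes in degree $2c$), this is exactly where the $2$ in the denominator appears: for every $r\le\lfloor d/(2(s-1))\rfloor+1$ we obtain $\gen_R(B)>r$, yielding $\gen_R(B)\ge\lfloor d/(2(s-1))\rfloor+2$.

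\emph{Main obstacles.} In (a), the technical crux is globalizing the affine codimension count to the twisted bundle $B^r$; the infinitude of $k$ supplies the genericity required for the Bertini-style moving argument. In (b), the principal difficulty is a precise construction of $R$ and $B$: one must realize a degree-$s$ Azumaya algebra whose Brauer class has a nontrivial $(r-1)(s-1)$-st power in the appropriate twisted \'etale cohomology, while controlling both the index (so the representing algebra has degree exactly $s$) and the multinomial coefficients that arise when expanding $(\alpha_1+\cdots+\alpha_M)^{(r-1)(s-1)}$---this is presumably where the hypothesis $s\ge3$ enters, since for $s=2$ the leading cross-term coefficient already vanishes in $\FF_2$.
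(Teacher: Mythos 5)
Your part (a) is essentially the paper's argument: you compute $c_{\Mat_s}(r)=(r-1)(s-1)$ by stratifying the non-generating locus according to the dimension of a common invariant subspace (the paper does this via Burnside's theorem and an incidence variety over the Grassmannian, Proposition~\ref{prop.Z}), and then apply the versality/moving argument of Theorem~\ref{thm.main} with $r=\Floor{d/(s-1)}+2$. That part is correct.

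Part (b) has a genuine gap. Your obstruction class is a power of the tautological Brauer class $\alpha\in \Hoh^2_{\et}(\B\PGL_s,\mu_s)$, and you assert that $\alpha^{(r-1)(s-1)}$ vanishes on $U_r/G$. No argument is given for this vanishing, and it is not the class the paper uses: the class killed by restriction to $U_r/G$ is, by the Thom--Gysin sequence, something in the image of the Gysin pushforward from $X_1$, and the paper identifies such a nonzero class by restricting to a one-parameter subgroup $\Gm\subset\PGL_s$ and exhibiting an explicit affine slice $Y$ meeting $X_1$ transversely in a single reduced point, so that the pushforward of $1_{X_1-T_2}$ restricts to $\theta^{(s-1)(r-1)}\in\Hoh^*(\B\Gm)$. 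In particular the paper's obstruction survives with $\QQ$-coefficients (Remark~\ref{rem:prMorNotInjNeeds3}), whereas any power of the Brauer class is $s$-torsion; so your candidate class cannot play the same role, and its vanishing on $U_r/G$ would have to be established by a separate (and nonobvious) argument. The hypothesis $s\ge 3$ also enters differently than you guess: it is needed to build the transverse slice (one needs an invertible $(s-1)\times(s-1)$ matrix none of whose standard basis vectors is an eigenvector), not because of multinomial coefficients mod $2$.

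The second gap in (b) is the construction of the pair $(R,B)$. You propose a product of varieties carrying Brauer classes $\alpha_1,\dots,\alpha_M$ with $\alpha=\sum\alpha_i$, but you would then need a degree-$s$ Azumaya algebra representing $\alpha$, i.e., you must bound the index of a class of period $s$ on a $d$-dimensional affine variety by $s$ itself --- a period--index problem that is out of reach in this generality and that you acknowledge but do not resolve. The paper avoids this entirely: it builds $B$ by twisting $\Mat_s$ by a torsor pulled back from the versal torsor $U_{r+t}\to U_{r+t}/G$ (made affine by the Jouanolou construction), so the existence of a degree-$s$ representative is automatic, and it then cuts the Krull dimension down to exactly $2(s-1)(r-1)$ using the Affine Lefschetz Hyperplane Theorem while preserving the cohomological obstruction. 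Without that mechanism (Theorem~\ref{thm:produceCounterexample}), your outline does not produce a ring of the stated dimension carrying an algebra that provably requires more than $r$ generators.
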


An important role in this paper will be played by the 
automorphism group $G = \Aut_k(A)$ (viewed as a linear algebraic group defined over $k$) 
and by the $k$-schemes $U_r$ and $Z_r$ of $r$-tuples
$(a_1, \ldots, a_r)$ of elements of $A$ that do and do not generate $A$, respectively. It is well known that the category of $R$-forms of $A$ is equivalent to the category
of  $G$-torsors over $\Spec R$, or equivalently, to the category of $R$-points of the classifying stack $BG$. On the other hand, the 
pairs $(B, (b_1, \ldots, b_r))$, consisting of an $R$-form $B$ of $A$ and generators $b_1, \ldots, b_r$ of the $R$-algebra $B$,
are in a natural bijective correspondence with $R$-points of the quotient stack $U_r/G$; see Proposition~\ref{prop.equivariant-map-corres}. 
In fact, $U_r/G$ is an algebraic space; see~Remark~\ref{rem.alg-space}. There is an evident morphism $U_r/G \to BG$ sending 
the pair $(B, (b_1, \ldots, b_r))$ to $B$. An $R$-algebra $B$ can be generated by $r$ elements if and only if the $R$-point of $BG$ corresponding to $B$ can be
lifted to $U_r/G$. Informally speaking, much of this paper revolves around the following question: how well does $U_r/G$ approximate $BG$?
The upper bounds on $\gen_R(B)$ in Theorems~\ref{thm.main2} and~\ref{thm.azumaya}(a) are proved by constructing explicit liftings when
$\Kdim R$ is suitably small. On the other hand, the lower bounds of Theorems~\ref{thm.main3} and~\ref{thm.azumaya}(b) are proved by exhibiting 
topological obstructions to the existence of a such liftings.

The remainder of this paper is structured as follows. In Section~\ref{sect.notations} we collect notational conventions used throughout the paper. 
Sections~\ref{sec:forms-of-algebras-and-gens}-\ref{sect.forster} are devoted to upper bounds on the number of generators.
In Sections~\ref{sec:forms-of-algebras-and-gens} and~\ref{sect.equivariant-maps} we study 
the spaces $U_r$ and $Z_r$ of generators and non-generators of a (multi)algebra $A$.
In Section~\ref{sect.versal} we introduce the notion of a \emph{$d$-versal torsor}. This is a variant of the usual notion of versality 
from~\cite[Section 5]{serre-ci} or~\cite{duncan-reichstein}, where instead of torsors over fields we consider torsors over rings of bounded Krull dimension. In Section~\ref{sect.upper} we prove an upper bound on $\gen_R(B)$ for any $R$-form $B$ of $A$ 
in terms of the codimension of $Z_r$ inside $A^r$ (Theorem~\ref{thm.main}) and use it to deduce Theorem~\ref{thm.main2}. 
In Sections~\ref{sect.azumaya} and~\ref{sect.octonion} we use Theorem~\ref{thm.main} to prove 
new upper bounds on the number of generators of an Azumaya algebra (Theorem~\ref{thm.azumaya}(a)), an Azumaya algebra with involution (Proposition~\ref{prop.involution}) and an octonion algebra (Proposition~\ref{prop.octonion}).
These bounds are of the same form,
\[ \gen_R(B) \leqslant \dfrac{d}{n - n_{\rm max}} + O(1), \]
as the bound given by Theorem~\ref{thm.main2} but they are stronger, because the additive constant is smaller. 
In Section~\ref{sect.forster} we use Theorem~\ref{thm.main} to give a new short proof of the generalized Forster 
bound (Theorem~\ref{thm.fr}) in the case where $R$ is a $k$-ring of finite type.
The second half of the paper, starting from Section~\ref{sect.lefschetz-principle}, is devoted to lower bounds on the number of generators.
In Section~\ref{sect.lefschetz-principle}, we prove a version of the Lefschetz principle that will allow us
to assume that $k$ is a subfield of $\mathbb C$ in these proofs, thus opening the door to the use of non-algebraic techniques  
such as singular cohomology. In Section~\ref{s:equivCohoUr}, we prove Theorem \ref{thm:produceCounterexample}, which is a general device for
constructing examples of $R$-forms $B$ of a $k$-algebra $A$ requiring many generators. We use this device to prove Theorem~\ref{thm.main3} in Section~\ref{sect.main3} and Theorem~\ref{thm.azumaya}(b) in Sections~\ref{sect.matrix-tuples} and~\ref{sect.azumaya(b)}. 
In Appendix~\ref{sect.topological-preliminaries} we give proofs of some results on 
equivariant cohomology of nonsingular varieties that are used in the second half of this paper.
These results are largely known but are not always documented in the literature in the form we need.


\begin{remark} \label{rem.unital} Let $R$ be a $k$-ring and $B$ be an $R$-algebra (in the usual sense) with a unit
  element. Unless otherwise specified, we will view $B$ as a unital algebra, and similarly for algebras with
  involution. Note that if $B_0$ is $B$ viewed as a non-unital algebra and $B_1$ is $B$ viewed as a unital
  algebra, then clearly $\gen_R(B_1) \leqslant \gen_R(B_0) \leqslant \gen_R(B_1) + 1$, and we often have
\begin{equation} \label{e.unital} \gen_R(B_1) = \gen_R(B_0);
\end{equation}
see Lemma~\ref{lem.unital2}.
In particular, equality~\eqref{e.unital} holds for the algebras we consider in Sections~\ref{sect.azumaya} and~\ref{sect.octonion}: Azumaya algebras of degree $\geqslant 2$ with and without involution and octonion algebras. On the other hand, if $B = R$ (viewed as a rank $1$ \'etale algebra over $R$), then~\eqref{e.unital} fails:
$\gen_R(B_0) = 1$, whereas $\gen_R(B_1) = 0$.
\end{remark}

\begin{remark} We prove the lower bounds of Theorems~\ref{thm.main2} and~\ref{thm.azumaya}(b) in characteristic $0$ only.
The main reason is that our arguments rely on the 
Affine Lefschetz Hyperplane Theorem (Theorem~\ref{th:affineLefschetz}) which requires the characteristic-$0$ assumption. 
Since we are forced to work in characteristic $0$, we liberally use non-algebraic techniques 
in our proofs of Theorems~\ref{thm.main3} and~\ref{thm.azumaya}(b). In particular, we work with 
singular cohomology of complex algebraic varieties and treat the complex points of the algebraic space $U_r/G$ 
as a differentiable manifold. 
It is likely that the use of these non-algebraic techniques can be avoided
by replacing singular cohomology with another oriented (as axiomatized in \cite{Panin2003} and \cite{Panin2009}), 
bounded-below, $\Aone$-invariant cohomology theory of smooth varieties, for instance $\ell$-adic \'etale cohomology. 
If one can prove an Affine Lefschetz Hyperplane Theorem for such a cohomology theory, 
then one should be able to establish lower bounds similar to Theorems~\ref{thm.main3} and~\ref{thm.azumaya}(b) in positive characteristic.
\end{remark}

\begin{remark} The results of  Forster \cite{forster} and the first two authors \cite{fr} assume only that $R$ is noetherian, 
whereas Theorem~\ref{thm.main2} and~\ref{thm.azumaya}(a) assume that $R$ is a finite type $k$-ring for some infinite field $k$.
We do not know whether the upper bounds of Theorem~\ref{thm.main2} and Theorem~\ref{thm.azumaya}(a) remain valid for 
an arbitrary noetherian ring $R$.
\end{remark}

\begin{remark}
  The argument we use to establish the lower bounds of Theorem~\ref{thm.azumaya}(b) does not extend to the case of
  quaternion algebras, i.e., when $s=2$. Proposition \ref{prop.XcapY} does not hold when $s=2$ which means
  that our proof of Lemma \ref{pr:MorNotInj} breaks down. In fact, Lemma \ref{pr:MorNotInj} fails for $s = 2$; see
  Remark \ref{rem:prMorNotInjNeeds3}. Nonetheless, the thesis of W.~S.~Gant \cite[Theorem 5.2]{Gant2020} 
  shows that when $s=2$ and $r$ is odd, Lemma~\ref{pr:MorNotInj} continues to hold. Arguing as 
  in the proof of Theorem~\ref{thm.azumaya}(b), we obtain the following:
  \begin{quote}
  If $\operatorname{char}(k) = 0$ and $d \ge 0$, then there exists a finitely generated $k$-ring $R$ of Krull dimension $d$ 
    and an Azumaya $R$-algebra $B$ of degree $2$ such that \[ \displaystyle \gen_R(B) \geqslant 2 \Floor{\frac{d+2}{4}} + 2. \] 
  \end{quote}
  The lower bound on $\gen_R(B)$ here is the same as that of Theorem~\ref{thm.azumaya}(b) when $d$ is congruent to $2$ or $3$
  modulo $4$, and is lower by $1$ when $d$ is congruent to $0$ or $1$.
\end{remark}

\section{Notational conventions}
\label{sect.notations}

Throughout this paper:

\begin{itemize}
\smallskip
\item $k$ will denote a field.
All schemes, varieties, algebraic groups, groups actions and morphisms are assumed to be defined over $k$.
\end{itemize}

By a variety (or equivalently, a $k$-variety) we shall mean a finite-type separated and reduced scheme over $k$. An algebraic group is a group scheme of finite type over $k$.

\begin{itemize}
\smallskip
\item
$A$ will denote a finite-dimensional $k$-algebra, or more generally, a finite-dimensional $k$-multialgebra.

\smallskip
\item
$n$ will denote the dimension of $A$ as a $k$-vector space, unless otherwise stated.

\smallskip
\item
$G$ will denote $\Aut(A)$, the automorphism group  scheme  of $A$ defined over $k$. 
\end{itemize}

By definition, $G$ is a closed subgroup of $\GL(A) \cong \GL_n$. In positive characteristic $G$ need not be smooth. 
If $k$ contains
at least $8$ elements, then
every affine group scheme of finite type arises as the automorphism group scheme of some $k$-algebra;
see \cite{Gordeev2003} in the case $k$ is of characteristic $0$, and \cite{milne-aut} for the general case.
The algebras in question may not be associative, but they are algebras in the
  usual sense (and not just multialgebras). 


\begin{itemize}
\smallskip
\item
$R$ will denote a $k$-ring, i.e., a commutative associative unital $k$-algebra. 

\smallskip
\item
$B$ will denote an $R$-form of $A$. This means that there exists 
a faithfully flat $R$-ring $S$ for which $A \otimes_k S \iso B \otimes_R S$ as $S$-algebras.

\smallskip
\item
$V_1$ will denote the affine space $\mathbb A(A)$ underlying our finite-dimensional $k$-algebra $A$. 
More generally, $V_r$ will denote the $r$-fold  product $V_1 \times_k \ldots \times_k V_1$ for any $r \geqslant 1$. 
If $R$ is a $k$-ring, then $R$-points of $V_r$ are in a natural bijection with $r$-tuples of elements in $A_R = A \otimes_k R$.

\smallskip
\item
$Z_r$ will denote the closed subscheme of $V_r$ consisting of $r$-tuples $a_1, \ldots, a_r \in A$ that do not generate $A$. 
\end{itemize}

More precisely, $Z_r$ is defined as follows.
Denote the multialgebra operations on $A$ by $\{m_i:A^{r_i}\to A\}_{i\in I}$ and recall that $n=\dim_k A$.
Let $W_r$ denote the collection of $\{m_i\}_{i\in I}$-monomials 
in the letters $x_1,\dots,x_r$. 
That is, $W_r= \bigcup_{t=1}^\infty W_{r,t}$, where $W_{r,1}=\{x_1,\dots,x_r\}$ and for any $t > 1$, $W_{r,t}$   
is the set of formal expressions obtained by substituting elements from $W_{r,t-1}$ in the $\{m_i\}_{i\in I}$ in every possible way.
Each $w\in W_r$ determines a morphism $w:V_r\to V_1$, given by substitution.
By definition, $\overline{a}=(a_1,\dots,a_r) \in V_r(k)=A^r$ 
generates $A$ if and only if there exists $\overline{w}  =(w_1,\dots,w_n)\in (W_r)^n$
such that $w_1(\overline{a}),\dots,w_n(\overline{a})$ form a $k$-basis of $A$.
For every $n$-tuple of monomials $\overline{w}=(w_1,\dots,w_n)\in (W_r)^n$, denote by
$f_{\overline{w}}$ the composition of $(w_1,\dots,w_n):V_r\to V_n\cong \Mat_{n\times n}$
with the determinant morphism $\Mat_{n\times n}\to \mathbb{A}^1_k$.
The scheme of non-generators $Z_r$ is defined as the closed subscheme of $V_r$ cut by   the equations $f_{\overline{w}}=0$ as
$\overline{w}$ ranges over $(W_r)^n$.

\begin{remark}
Note that $Z_r$ may not be reduced, and the natural $G$-action on $Z_r$ may not restrict to the associated reduced scheme $Z_r^{\mathrm{red}}$.
For an example, take $r=1$ and let $A$ be the non-unital algebra $k[\varepsilon]/(\varepsilon^2)$ when $k$ is a field of characteristic $2$.
\end{remark}

\begin{itemize}
\smallskip
\item
$U_r=V_r- Z_r$ will denote the open subscheme of $V_r$ consisting of $r$-tuples $a_1, \ldots, a_r \in A$ that generate $A$.
\end{itemize}
The group scheme $G$ acts on $V_r$ from the left by $g\cdot (a_1,\dots,a_r)=(g(a_1),\dots,g(a_r))$, and the action restricts to $Z_r$ and $U_r$. 

\begin{itemize}
\smallskip
\item
$\gen_R(B)$ will denote the minimal cardinality of all sets of elements of $B$ that generate $B$ as an $R$-algebra (or multialgebra).
\end{itemize}

\begin{lemma} \label{lem.unital2}
Let $A$ be a finite-dimensional unital $k$-algebra (in the usual sense, here we do not allow $A$ to be a general multialgebra), 
$R$ be a $k$-ring, and $B$ be an $R$-form of $A$. 
Let $\overline{k}$ be an algebraic closure of $k$ and 
suppose $A\otimes_k\overline{k}$ does not have an augmentation, i.e.,  
there does not exist a morphism  $A \otimes_k \overline{k} \to \overline{k}$ of unital $\overline{k}$-algebras. 
Write $B_1$, resp.\ $B_0$, to denote $B$ considered as a unital, resp.\ non-unital, algebra. Then
$\gen_R(B_1) = \gen_R(B_0)$.
\end{lemma}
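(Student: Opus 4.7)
The inequality $\gen_R(B_1)\le\gen_R(B_0)$ is immediate. To establish the reverse, the plan is to show directly that any $r$-tuple $b_1,\dots,b_r\in B$ generating $B$ as a unital $R$-algebra also generates $B$ as a non-unital one, using the no-augmentation hypothesis to rule out the single potential obstruction — that the non-unital subalgebra generated has codimension exactly one.

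Concretely, let $B_0\subseteq B$ denote the non-unital $R$-subalgebra generated by $b_1,\dots,b_r$. The unital hypothesis gives $B=B_0+R\cdot 1_B$, and a short calculation (writing an arbitrary $y\in B$ as $y_0+r\cdot 1_B$ and noting that for $x\in B_0$ one has $xy=xy_0+rx\in B_0$, and symmetrically for $yx$) will show that $B_0$ is a two-sided ideal of $B$. The quotient $B/B_0$ is therefore a cyclic $R$-module, and in fact isomorphic as a unital $R$-algebra to $R/I$ for some ideal $I\subseteq R$. The task thus reduces to proving $I=R$.

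For this, I would argue by contradiction. Suppose $I$ is proper, pick a maximal ideal $\m\supseteq I$, and let $\overline{K}$ be an algebraic closure of $R/\m$. Then the composition $B_{\overline{K}}:=B\otimes_R\overline{K}\twoheadrightarrow (B/B_0)\otimes_R\overline{K}\isom\overline{K}$ is an augmentation of the unital $\overline{K}$-algebra $B_{\overline{K}}$. The main subtlety now is descending this augmentation to one of $A\otimes_k\overline{k}$: since $G=\Aut_k(A)$ need not be smooth, $B_{\overline{K}}$ is not in general isomorphic to $A\otimes_k\overline{K}$ over $\overline{K}$, so I pass to a faithfully flat $\overline{K}$-algebra $S$ trivializing the form, $B_{\overline{K}}\otimes_{\overline{K}}S\isom A\otimes_kS$. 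Base-changing the augmentation to $S$ yields a unital $S$-algebra map $A\otimes_kS\to S$, equivalently a $k$-algebra map $\varphi\colon A\to S$. Since $A$ is finite-dimensional, the image of $\varphi$ is a finite-dimensional $k$-subalgebra of $S$; composing $\varphi$ with the projection $S\to S/\mathfrak{n}$ for some maximal ideal $\mathfrak{n}\subset S$ produces a nonzero $k$-algebra homomorphism from $A$ to a field, whose image is a finite field extension of $k$ and hence embeds in $\overline{k}$. The resulting $k$-algebra homomorphism $A\to\overline{k}$ is the forbidden augmentation of $A\otimes_k\overline{k}$, contradicting the hypothesis. Hence $I=R$, so $B_0=B$, which completes the proof.
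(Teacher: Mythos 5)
Your proof is correct and follows the same overall strategy as the paper's: show the non-unital subalgebra generated by a unital generating set is a two-sided ideal with quotient $R/I$, and derive from a proper $I$ an augmentation that contradicts the hypothesis. The one place you genuinely diverge is the final descent to $\overline{k}$: the paper tensors the map $B \to R/I$ with the splitting ring $S$ and a residue field $K$ of $S$ to get an augmentation $A_K \to K$, then introduces the $k$-scheme $X$ of augmentations of $A$ and invokes the Nullstellensatz ($X(K)\neq\emptyset$ implies $X(\overline{k})\neq\emptyset$) to land in $\overline{k}$. You instead observe that the image of the resulting unital $k$-algebra map $A \to S/\mathfrak{n}$ is a finite-dimensional $k$-subalgebra of a field, hence a finite field extension of $k$, hence embeds in $\overline{k}$. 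Your route is slightly more elementary (no scheme of augmentations, no Nullstellensatz) and your caution about $B_{\overline{K}}$ not being split over $\overline{K}$ when $\Aut_k(A)$ is non-smooth is well placed; the paper's scheme-theoretic formulation has the mild advantage of packaging the descent as a general statement about nonempty finite-type $k$-schemes. Both arguments are complete.
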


\begin{proof}
  The inequality $\gen_R(B_1) \leqslant \gen_R(B_0)$ is obvious, since any generating set for $B_0$ also generates
  $B_1$.  To prove the opposite inequality, we argue by contradiction. Assume the contrary: there exist elements
  $b_1,\dots,b_r\in B$ that generate $B_1$ but not $B_0$. Let $B'$ be the non-unital subalgebra generated by
  $b_1, \ldots, b_r$. Then $B'$ is a proper two-sided ideal of $B_1$ and the $R$-algebra $ B/B'$ is generated as an $R$-module
  by the identity element $1_B$ of $B$. In other words, there a is unital $R$-algebra homomorphism $\epsilon: B\to B/B'\cong R/I$ for some proper ideal $I$ of $R$.
  
  Since we are assuming that $B$ is an $R$-form of $A$, there exists 
  a faithfully flat $R$-ring $S$ such that $B \otimes_R S$ is isomorphic to $A_S = A \otimes_k S$ as an $S$-algebra.
  Tensoring the map $\epsilon: B \to R/I$ with $S$, we obtain a map $\epsilon_S \colon A_S \to S/IS$. Tensoring further with a field $K = S/M$,
  where $M$ is a maximal ideal of $S$ containing $IS$, we obtain an augmentation $\epsilon_K \colon A_K \to K$, where $K$ is a field containing $k$. 
  
  Now let $X$ be the $k$-scheme of 
  unital $k$-algebra homomorphisms from
  $A$ to $k$; it is a closed subscheme of the affine space associated to the dual space of $A$.
  By what we have shown,  $X$ has a $K$-point, namely, the augmentation $\epsilon_K:A_K\to K$. We conclude that $X \neq \emptyset$ and hence, $X$ has a $\overline k$-point. 
  Equivalently $A_{\overline{k}}$, has an augmentation $A_{\overline{k}} \to  \overline{k}$, contradicting our assumption. This contradiction shows that  $\gen_R(B_0) = \gen_R(B_1)$.
%
\end{proof}

All $G$-torsors in this paper
are \emph{left} $G$-torsors $\pi:T\to X$ such that $\pi$ admits a  section after base-changing along some faithfully flat quasi-compact morphism $X'\to X$ (that is, we consider left $G$-torsors for the fpqc topology).

\begin{itemize}
\smallskip
\item
For a $G$-torsor
$T\to \Spec R$ 
(with $G=\Aut_k(A)$ and $R$ a $k$-ring), we let
${}^T\! A$ denote the twist of $A$ by $T$.
\end{itemize}

Recall that ${\,}^T\!A$ is an $R$-form of $A$ defined as follows.
Elements of ${\,}^T \! A$ are $R$-points of the twisted variety ${\, }^T \! V_1$ over $\Spec R$,
which is the quotient of $T\times V_1$ by the  diagonal action of $G$. When $A$ is an algebra (in the usual sense), the $R$-algebra operations on ${\, }^T\! A$ 
are given by twisting each of the following 
operations on $A$ by $T$:
\begin{itemize}
\item[{$\circ$}]
addition, $+ \colon  V_1 \times V _1 \to V_1$, given by $(a_1, a_2) \mapsto a_1 + a_2$;
\item[{$\circ$}]
scalar multiplication $\mathbb A^1 \times V_1 \to V_1$ given by $(c, a) \mapsto c a$; and
\item[{$\circ$}]
algebra multiplication $\cdot \,\colon V_1 \times V_1 \to V_1$ given by $(a_1, a_2) \mapsto a_1 \cdot a_2$.
\end{itemize}
\smallskip
\noindent
Here $G$ acts trivially on $\mathbb A^1$. For details, see \cite[Section III.2.3]{giraud}.
The same construction works for multialgebras.

\smallskip
\begin{itemize}
\item
$\Hoh^i(X)$ will denote the singular cohomology of a topological space $X$ with coefficients in the ring $\ZZ$.

\item
$\Hoh^*(X)$ will denote the graded ring $\bigoplus_{i=0}^\infty \Hoh^i(X)$.
\end{itemize}
If different coefficients are required, they will be specified, e.g., $\Hoh^i(X; \QQ)$. 

\begin{itemize}
\item
If $Y$ is a variety over a field $k$ and $k$ is equipped with an embedding $k \hookrightarrow \C$, then $Y(\C)$ will denote
the topological space underlying the associated analytic space of $Y_\C$,
\cite[Sec.~2]{SerreGeometriealgebriquegeometrie1955}. If $Y$ is such a variety, then notation $\Hoh^i(Y)$ will be taken to
mean $\Hoh^i(Y(\C))$.

\item
If $\Gamma$ is a topological group, then $E \Gamma \to B \Gamma$ will denote a universal principal $\Gamma$-bundle, and
if $\Gamma$ acts on a space $X$, then $\Hoh^i_\Gamma(X)$ will denote the Borel equivariant cohomology group $\Hoh^i((X
\times E\Gamma)/\Gamma)$, \cite[Chapter III]{Hsiang1975}. 
\end{itemize}
The notation $\Hoh^i_G(Y)$ will be used in place of
$\Hoh^i_{G(\C)}(Y(\C))$ when $G$ is a linear algebraic group acting on a variety $Y$ over a field $k \hookrightarrow \C$.

\begin{itemize}
\item
The notation $\Gm$ is reserved for the multiplicative group-scheme $\GL_1$.
\end{itemize}

\section{Preliminaries on schemes of generators and non-generators}
\label{sec:forms-of-algebras-and-gens}

Let $A$ be a finite-dimensional (multi)algebra over a field $k$. Recall from Section~\ref{sect.notations} that we denote the affine space of ordered $r$-tuples of elements in $A$ by $V_r$, the closed subscheme of $r$-tuples not generating $A$  by $Z_r \subset V_r$ and the open subscheme of $r$-tuples of generators by $U_r = V_r - Z_r$. In this section, we will discuss some basic properties of these schemes. We write $A_R$ for $A\otimes_k R$.

\begin{lemma}\label{lem:Ur-sections}
 Let $R$ be a $k$-ring (not necessarily of finite type). Then
 $U_r(R)$ is the set of $r$-tuples $(b_1,\dots,b_r)\in V_r(R)=A_R^r$ that generate $A_R$ as an $R$-algebra.
\end{lemma}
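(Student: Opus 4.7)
The plan is to reduce both sides of the claimed equality to a common pointwise condition at the residue fields of maximal ideals of $R$, using the defining equations of $Z_r$ on the scheme-theoretic side and Nakayama's lemma on the algebraic side.

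First I would unpack what it means for a tuple $(b_1, \ldots, b_r) \in V_r(R) = A_R^r$ to lie in $U_r(R)$. Viewing the tuple as a morphism $x \colon \Spec R \to V_r$, membership in $U_r(R)$ is equivalent to $x$ factoring through the open subscheme $U_r = V_r \setminus Z_r$. Since the ideal of $Z_r$ in $V_r$ is generated by the polynomial functions $f_{\overline{w}}$ for $\overline{w} \in (W_r)^n$, this factorization holds if and only if the ideal $\bigl(f_{\overline{w}}(b_1, \ldots, b_r)\bigr)_{\overline{w}} \subseteq R$ is the unit ideal; equivalently, for every maximal ideal $\m \subseteq R$ there is some $\overline{w}$ with $f_{\overline{w}}(b_1, \ldots, b_r) \notin \m$. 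By the field-case definition of generators recalled earlier in the paper, this is exactly the statement that the images of $b_1, \ldots, b_r$ in $A_{\kappa(\m)}$ generate $A_{\kappa(\m)}$ as a $\kappa(\m)$-algebra, for every maximal $\m$.

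Next I would treat the right-hand side. Let $A' \subseteq A_R$ denote the $R$-submodule spanned by all multialgebra monomials $w(b_1, \ldots, b_r)$ for $w \in W_r$, so $(b_1, \ldots, b_r)$ generate $A_R$ as an $R$-(multi)algebra precisely when $A_R / A' = 0$. Since $A_R$ is free of rank $n$ over $R$, the quotient $A_R / A'$ is a finitely generated $R$-module; by Nakayama's lemma applied to each localization $R_\m$ (no noetherian hypothesis on $R$ being needed), it vanishes if and only if $(A_R/A') \otimes_R \kappa(\m) = 0$ for every maximal $\m$. The image of $A'$ under the reduction $A_R \twoheadrightarrow A_{\kappa(\m)}$ is visibly the $\kappa(\m)$-subalgebra generated by the reductions of the $b_i$, so this condition again amounts to: for every maximal $\m$, the reductions of $b_1, \ldots, b_r$ generate $A_{\kappa(\m)}$ as a $\kappa(\m)$-algebra.

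Comparing the two characterizations yields the lemma. The step requiring the most attention is the Nakayama reduction, since $R$ is only assumed to be a commutative $k$-ring; one must use the form of Nakayama that works over an arbitrary commutative ring, and verify that $A_R/A'$ is finitely generated, which is automatic from freeness of $A_R$. The remainder is a direct unpacking of the definitions of $Z_r$, of the monomial functions $f_{\overline{w}}$, and of the generation condition over a field.
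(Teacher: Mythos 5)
Your proof is correct and follows the same basic strategy as the paper's: both arguments reduce the question to a fibrewise generation condition over the residue fields of points of $\Spec R$, using the defining functions $f_{\overline{w}}$ of $Z_r$ to translate between the scheme-theoretic and algebraic sides. The one substantive difference is in how the direction ``fibrewise generation at every point implies global generation of $A_R$'' is handled: the paper simply cites \cite[Lemma~2.1]{fr} for this, whereas you prove it directly by introducing the $R$-span $A'$ of all monomials in the $b_i$ and applying Nakayama's lemma to the finitely generated module $A_R/A'$ at each maximal ideal. This makes your argument self-contained where the paper's is not, and your observations that no noetherian hypothesis is needed and that the span of monomials is closed under the multilinear operations (hence equals the generated subalgebra) are exactly the points that need checking; the paper's version, by contrast, works with all primes rather than just maximal ideals, but the two are interchangeable here since an ideal of $R$ is the unit ideal if and only if it lies in no maximal ideal.
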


\begin{proof}
  Let $\overline{b}=(b_1,\dots,b_r)\in V_r(R)$ and let $t:\Spec R\to V_r$ be the corresponding morphism.

  Suppose that $\overline{b}$ generates $A_R$. Let $P\in \Spec R$, let $A(P)=A\otimes_k \mathrm{Frac}(R/P)$
  and let $b_i(P)$ denote the image of $b_i$ in $A(P)$. Then $\overline{b}(P):=(b_1(P),\dots,b_r(P))$ generates $A(P)$. By 
  the construction of $Z_r$ in Section~\ref{sect.notations}, there is $\overline{w}\in (W_r)^n$
  such that $f_{\overline{w}}(\overline{b}(P))\neq 0$ in $\mathrm{Frac}(R/P)$. This means that the image 
  of $t$ does not meet the $P$-fibre of $Z_r$. As this holds for all $P\in\Spec R$, the image of $t:\Spec R\to V_r$ 
  is contained $V_r-Z_r=U_r$ and $\overline{b}\in U_r(R)$.
  
  Conversely, assume $\overline{b}\in U_r(R)$. Then for every $P\in \Spec R$, we have
  $t(P)\notin Z_r$. This means that there is  $\overline{w}\in (W_r)^n$
  such that $f_{\overline{w}}(\overline{b}(P))\neq 0$, hence $\overline{b}(P)$ generates $A(P)$. This holds for all $P\in \Spec R$, so by \cite[Lemma~2.1]{fr}, $\overline{b}$ generates $A_R$.
\end{proof}

Recall from Section~\ref{sect.notations} that the group scheme $G=\Aut_k(A)$ acts on $V_r$ from the left, and the action restricts to $Z_r$ and $U_r$.

\begin{lemma} \label{pr:schemeTheoreticallyFree}
The action of $G$ on $U_r$ is scheme-theoretically free, i.e., 
the morphism
\begin{equation}
   \label{eq:5}  \Phi: G \times U_r \longrightarrow U_r \times U_r,
\end{equation}
given by $\Phi(g,(a_1, \ldots, a_r)) = ((g(a_1), \ldots, g(a_r)), (a_1, \ldots, a_r))$ is a closed embedding.
\end{lemma}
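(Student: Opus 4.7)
The plan is to establish that $\Phi$ is a closed immersion by working locally on $U_r\times U_r$ with respect to the second factor. Morally, since an $r$-tuple $\overline{a}\in U_r$ generates $A$, any $g\in G$ with $g\cdot\overline{a}=\overline{b}$ is uniquely determined by the pair $(\overline{a},\overline{b})$, and in fact can be recovered as a regular function of this pair. Making this precise amounts to producing, on a suitable open cover of $U_r\times U_r$, an explicit regular inverse to $\Phi$ whose image is a closed subscheme.

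The first step will be to use the open cover $\{U'_{\overline{w}}\}_{\overline{w}\in (W_r)^n}$ of $U_r$, where $U'_{\overline{w}}=\{f_{\overline{w}}\neq 0\}\subset V_r$; by the very definition of $Z_r$ in Section~\ref{sect.notations}, these open subschemes cover $U_r$. Taking products yields an open cover $\{U_r\times U'_{\overline{w}}\}$ of $U_r\times U_r$ whose preimage under $\Phi$ is $G\times U'_{\overline{w}}$. Since being a closed immersion is a property local on the target, it suffices to show that each restriction $\Phi_{\overline{w}}\colon G\times U'_{\overline{w}}\to U_r\times U'_{\overline{w}}$ is a closed immersion.

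The second step is to construct the local inverse. On $U'_{\overline{w}}$, the monomials $(w_1(\overline{a}),\dots,w_n(\overline{a}))$ form a $k$-basis of $A$, so the formula $\tau(w_i(\overline{a}))=w_i(\overline{b})$ defines a regular morphism $\tau_{\overline{w}}\colon U'_{\overline{w}}\times V_r\to \End(A)$. I will then define $C_{\overline{w}}\subset U_r\times U'_{\overline{w}}$ to be the closed subscheme cut out by the polynomial conditions that $\tau_{\overline{w}}$ preserve each multialgebra operation of $A$ and that $\tau_{\overline{w}}\cdot\overline{a}=\overline{b}$. On $C_{\overline{w}}$ the candidate inverse is $(\overline{b},\overline{a})\mapsto (\tau_{\overline{w}}(\overline{a},\overline{b}),\overline{a})$, and a short calculation using the identity $w_i(g\cdot\overline{a})=g(w_i(\overline{a}))$ confirms that this map is inverse to $\Phi_{\overline{w}}$, viewed as a map $G\times U'_{\overline{w}}\to C_{\overline{w}}$.

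The main obstacle is to verify that on $C_{\overline{w}}$ the endomorphism $\tau_{\overline{w}}$ really lies in $G=\Aut(A)$ rather than merely among multialgebra endomorphisms of $A$; this is subtle because $\Aut(A)$ is in general only locally closed in $\End(A)$. The key point is that for $(\overline{b},\overline{a})\in C_{\overline{w}}$ we have $\overline{b}\in U_r$, so by Lemma~\ref{lem:Ur-sections} the tuple $\overline{b}$ generates $A$; combined with $\tau\cdot\overline{a}=\overline{b}$ and the fact that $\tau$ is a multialgebra morphism, this forces the image of $\tau$ to contain the generating set $\overline{b}$ of $A$, so $\tau$ is surjective and hence invertible since $A$ is finite-dimensional. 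Once this automatic invertibility is established, the candidate inverse lands in $G\times U'_{\overline{w}}$, each $\Phi_{\overline{w}}$ becomes an isomorphism onto the closed subscheme $C_{\overline{w}}\subset U_r\times U'_{\overline{w}}$, and $\Phi$ is therefore a closed immersion.
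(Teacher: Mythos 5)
Your proof is correct, and it takes a genuinely different route from the paper's. The paper shows $\Phi$ is a closed embedding by invoking the criterion that a closed embedding is the same as a proper monomorphism \cite[Cor.~18.12.6]{Grothendieck1964}: injectivity on $R$-points follows from the freeness of the $G(R)$-action on generating tuples, and properness is verified via the valuative criterion, where the key step --- writing each basis element $e_i$ as a polynomial $f_i(a_1,\dots,a_r)$ with coefficients in the valuation ring, so that $g(e_i)=f_i(ga_1,\dots,ga_r)$ is integral --- is precisely the ``recover $g$ from $(\overline{a},g\cdot\overline{a})$'' idea you exploit. You instead make that idea regular rather than merely pointwise: on the basic open set where $f_{\overline{w}}\neq 0$ the matrix with columns $w_i(\overline{a})$ is invertible, so $\tau_{\overline{w}}$ is a morphism to $\End(A)$, and the image of $\Phi_{\overline{w}}$ is exhibited as the explicit closed subscheme $C_{\overline{w}}$ with a regular two-sided inverse. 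The one delicate point --- that a multialgebra endomorphism $\tau$ with $\tau\cdot\overline{a}=\overline{b}$ and $\overline{b}$ generating is automatically an automorphism --- you handle correctly, and it does go through over an arbitrary $k$-ring $R$ (as it must, for the scheme-theoretic statement): the image of $\tau$ is an $R$-subalgebra of $A_R$ containing the generators $b_j$, hence equals $A_R$ by Lemma~\ref{lem:Ur-sections}, and a surjective endomorphism of a finite free $R$-module is bijective, with inverse again a multialgebra morphism. Your approach buys a self-contained, constructive argument avoiding both the valuative criterion and the EGA characterization, at the cost of the explicit bookkeeping with the cover $\{f_{\overline{w}}\neq 0\}$ and the closed conditions cutting out $C_{\overline{w}}$; the paper's version is shorter to write because it outsources the closedness of the image to general theory.
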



\begin{proof} By~\cite[Cor.~18.12.6]{Grothendieck1964},
  showing that $\Phi$ is a closed embedding is equivalent to showing that $\Phi$ is a proper monomorphism.
  To see that $\Phi$ is a monomorphism, it suffices to test that
  \begin{equation}
    \label{eq:8}
    \Phi(R) : G(R) \times U_r(R) \to U_r(R) \times U_r(R)
  \end{equation}
  is injective for every $k$-ring
  $R$. Here, thanks to Lemma~\ref{lem:Ur-sections}, $U_r(R)$ is the set of all $r$-tuples $(b_1, \dots, b_r)$ of generators for $A_R$. 
  The action of $G(R)=\Aut_R(A_R)$ on $U_r(R)$ 
  is free because $g \in G(R)$ fixes $(b_1, \dots, b_r) \in U_r(R)$ if
  and only if $g$ fixes the $R$-algebra generated by the $b_i$, i.e., the entire algebra $A_R$. 
  The element $g$ fixes the entire algebra
  $A_R$ if and only if $g$ is the identity automorphism. 
  It follows that for any $R$, the map $\Phi(R)$ in \eqref{eq:8}
  is an injection, as claimed. 

  It remains to show that $\Phi$ is a proper map. We use the valuative criterion for properness. Suppose $R$ is a
  valuation ring over $k$ with fraction field $K$, and let $\varphi$, $\psi$ be morphisms as in the following
  commutative diagram. We need to show the existence of the dotted arrow.
  \begin{equation*}
    \begin{tikzcd}
      \Spec K \rar{\phi} \dar & G \times U_r \dar{\Phi} \\ \Spec R \rar{\psi} \ar[ur, dashed] & U_r \times  U_r.
    \end{tikzcd}
  \end{equation*}
  We remark that $A_R \subset A_K$, and $U_r(R) \subset U_r(K)$.  The arrow $\phi$ represents an automorphism
  $g \in G(K)$ and an $r$-tuple $(a_1, \dots, a_r) \in U_r(K)$; the arrow $\psi$ represents a pair of $r$-tuples in
  $U_r(R)$, and commutativity of the diagram implies that these $r$-tuples are $(ga_1, \dots, ga_r)$ and
  $(a_1, \dots, a_r)$. In particular, both $(ga_1, \dots, ga_r)$ and $(a_1, \dots, a_r)$ lie in $U_r(R)$. We must show
  that $g$ is defined over $R$. If $e_1, \ldots, e_n$ is a $k$-basis of $A$, we claim that $g(e_i) \in A_R$ for every
  $i = 1, \ldots, n$. This will prove that $g \in \GL_n(R) \cap G(K) = G(R)$.
  
  To prove the claim, note that since $(a_1, \ldots, a_r) \in U_r(R)$,
  it is possible to find polynomials $f_1, \dots, f_n$ in $r$ non-commuting variables with coefficients in $R$
 such that
  \[ f_i (a_1, \dots, a_r) = e_i \quad \forall i \in \{1, \dots, n\}. \]
(When $A$ is a multialgebra, the $f_i$
  should be taken to be formal linear combinations
  of members of $W_r$
  from Section~\ref{sect.notations}.)
  Now consider the result of applying $g$ to $e_i$. Since $g$ is known to lie
  in $G(K)$, $g(e_i)$ is a priori an element of $A_K$, but using the polynomials $f_i$, we see that
  \[ g ( e_i) = g (f_i (a_1, \dots, a_r)) = f_i(g a_1, \dots, g a_r),\]
so $g(e_i)$ actually lies in $A_R$, as claimed. This allows us to produce a lift, so the valuative criterion for properness is satisfied.
\end{proof}

\begin{rmk} \label{rem.alg-space}
  Regard $U_r/G$ as a sheaf on the large fppf site of $\Spec k$. Since $G$ is a linear algebraic group, being by definition a closed subgroup of $\GL(A)$, we may regard $U_r/G$ as an Artin stack.
  Moreover, since the $G$-action on $U_r$ is free, $U_r/G$ is an algebraic space over $k$;
  see~\cite[Tag 04SZ]{stacks_project}.
\end{rmk}

\section{Generators and equivariant maps}
\label{sect.equivariant-maps}

Recall from the introduction that an $R$-form of $A$ is an $R$-algebra $B$ for which 
there exists a faithfully flat $R$-ring $S$ such that $A \otimes_k S \iso B \otimes_R S$ as $S$-algebras. 
It is well known that the category of $G$-torsors over $\Spec R $ is equivalent to the category of $R$-forms of $A$,
functorially in $R$. The equivalence is given by sending an $R$-algebra $B$ to 
$\operatorname{Iso}_{R\text{-}\mathrm{alg}}(A_R, B)$, regarded as an $R$-scheme and 
endowed with the \emph{left} $G_R$-action given by $g\cdot \psi=\psi \circ g^{-1}$ on sections.
Conversely, a $G$-torsor $T \to \Spec R $ corresponds to the twisted algebra ${\,}^T \! A$ defined in Section~\ref{sect.notations}.

\begin{prop} 
\label{prop.equivariant-map-corres}
	Let $R$ be a $k$-ring (not necessarily of finite type), let $B$ be an $R$-form of $A$, and let $T\to \Spec R$ be its associated $G$-torsor.
	Then there is a natural (in $R$) bijective correspondence between the following sets:
	\begin{enumerate}
	\item[(a)] $G$-equivariant $k$-morphisms from $T$ to $U_r$, and
	\item[(b)] $r$-tuples $(b_1,\dots,b_r)\in B^r$ generating $B$ as an $R$-algebra.
	\end{enumerate}	 
\end{prop}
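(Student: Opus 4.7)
The plan is to identify both sets with sections of an appropriate open subscheme of the twisted variety $^{T}V_r$. The starting point is the standard equivalence between $G$-equivariant $k$-morphisms $T\to X$, for $X$ any $k$-scheme with $G$-action, and morphisms $\Spec R\to (T\times X)/G={}^{T}\!X$: given a $G$-equivariant map $\phi:T\to X$, the graph $(\id_T,\phi):T\to T\times X$ is $G$-equivariant and descends along the torsor $T\to \Spec R$ to a section of $^{T}\!X\to \Spec R$, and this assignment is a natural bijection (see \cite[Section III.2.3]{giraud}). Applied to $X=V_r$, this identifies $G$-equivariant morphisms $T\to V_r$ with sections of $^{T}V_r=({}^{T}V_1)^{r}\to \Spec R$, that is, with $r$-tuples in $B^r$; applied to $X=U_r$, it identifies $G$-equivariant morphisms $T\to U_r$ with sections of $^{T}U_r\subseteq {}^{T}V_r$.

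Since $U_r\subseteq V_r$ is an open $G$-stable subscheme, $^{T}U_r$ is an open subscheme of $^{T}V_r$, and a section $\Spec R\to {}^{T}V_r$ factors through $^{T}U_r$ if and only if it does so after base change along some faithfully flat quasi-compact morphism $\Spec S\to \Spec R$. Choose such an $S$ that trivialises $T$, so that $T\times_R S\cong G\times_k S$ and we have a compatible isomorphism $B\otimes_R S\cong A\otimes_k S$ under which $^{T}V_r\times_R S\cong V_r\times_k S$ and $^{T}U_r\times_R S\cong U_r\times_k S$. Thus the section corresponding to $(b_1,\dots,b_r)\in B^r$ factors through $^{T}U_r$ if and only if the image tuple $(b_1,\dots,b_r)\otimes 1$ in $(A\otimes_k S)^{r}$ lies in $U_r(S)$.

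By Lemma~\ref{lem:Ur-sections} applied to $A$ over $k$, the condition $(b_1,\dots,b_r)\otimes 1\in U_r(S)$ is equivalent to $(b_1,\dots,b_r)\otimes 1$ generating $A\otimes_k S\cong B\otimes_R S$ as an $S$-algebra. Finally, being a generating set is faithfully flat local: by \cite[Lemma~2.1]{fr} (already invoked in the proof of Lemma~\ref{lem:Ur-sections}), $(b_1,\dots,b_r)$ generates $B$ as an $R$-algebra if and only if its image in $B\otimes_R S$ generates $B\otimes_R S$ as an $S$-algebra. Combining these equivalences yields the desired bijection, and naturality in $R$ follows because every step is given by pullback.

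The only real content beyond bookkeeping is the descent step in the last paragraph, i.e., showing that the ``lies in $U_r$'' condition for a morphism out of $T$ is detected after trivialising $T$; this is precisely where Lemma~\ref{lem:Ur-sections} (and the accompanying fpqc-local characterisation of generation from \cite[Lemma~2.1]{fr}) does the work, so no additional obstacle is expected.
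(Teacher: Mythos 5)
Your proposal is correct and follows essentially the same route as the paper: both identify $r$-tuples in $B^r$ with $G$-equivariant morphisms $T\to V_r$ by descending graphs along the torsor (the paper's Lemma~\ref{lem.torsors}/Lemma~\ref{lem.mor}), then reduce to the split case by a faithfully flat base change and invoke Lemma~\ref{lem:Ur-sections} together with the fpqc-locality of generation. The only difference is cosmetic: you phrase the final step as a section of ${}^{T}V_r$ factoring through the open subscheme ${}^{T}U_r$, while the paper checks directly that $\phi(T)\subseteq U_r$ using a section of the trivialized torsor.
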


Our proof of Proposition~\ref{prop.equivariant-map-corres} will rely on the following lemmas. 

\begin{lem} \label{lem.torsors} 
Let $X$ and $Y$ be $k$-schemes, $G$ a group scheme over $k$,
and
\[  \xymatrix{ V \ar@{->}[d]_{\alpha}  
&     & W  \ar@{->}[d]^{\beta} \ar@{->}[ll]_{\pi}  \\
 X   
 &      &    Y \ar@{->}[ll]^{\overline{\pi}}}  \]
be a cartesian diagram over $k$, where $\alpha$ and $\beta$ are $G$-torsors and $\pi$ is $G$-equivariant. 
Then there is a natural (in the morphism $X \to Y$) bijection
between sections $\overline{f} \colon X \to Y$ of $\overline{\pi}$ and $G$-equivariant sections $f \colon V \to W$ of $\pi$
such that $\overline{f} \circ \alpha=\beta\circ f$ whenever $f$ corresponds to $\overline{f}$.
\end{lem}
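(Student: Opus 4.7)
The plan is to exploit the universal property of the cartesian square, together with descent along the $G$-torsor $\alpha\colon V\to X$. In one direction, given a section $\overline{f}\colon X\to Y$ of $\overline{\pi}$, I observe that the pair of morphisms $\id_V\colon V\to V$ and $\overline{f}\circ\alpha\colon V\to Y$ satisfies $\alpha\circ\id_V = \overline{\pi}\circ(\overline{f}\circ\alpha)$, so by the cartesian property they factor through a unique morphism $f\colon V\to W$ with $\pi\circ f=\id_V$ and $\beta\circ f=\overline{f}\circ\alpha$. Under the identification $W=V\times_X Y$, the $G$-action on $W$ is the one inherited from the action on the first factor (because pulling back the $G$-torsor $\alpha$ along $\overline{\pi}$ produces the $G$-torsor $\beta$, with $\pi$ being the torsor projection); hence on points $f(v)=(v,\overline{f}(\alpha(v)))$, which is manifestly $G$-equivariant since $\alpha(g\cdot v)=\alpha(v)$.

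In the other direction, given a $G$-equivariant section $f\colon V\to W$ of $\pi$, I would produce $\overline{f}$ as the unique morphism $X\to Y$ whose pullback along $\alpha$ is $\beta\circ f$. This is legitimate because $\beta\circ f$ is $G$-invariant (the composite of a $G$-equivariant map with the quotient map of a $G$-torsor), and $\alpha$ realises $X$ as the sheaf quotient $V/G$; so by fpqc descent such an $\overline{f}$ exists and is unique. The identity $\overline{\pi}\circ\overline{f}=\id_X$ then follows by composing with $\alpha$: $\overline{\pi}\circ\overline{f}\circ\alpha = \overline{\pi}\circ\beta\circ f = \alpha\circ\pi\circ f = \alpha$, and cancelling the fpqc cover $\alpha$.

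That the two constructions are mutually inverse, as well as naturality in the given morphism, follow from the uniqueness clauses built into the universal property of the fiber product and into descent. The main obstacle, such as it is, is bookkeeping around the $G$-action on $W$: one must verify that the action making $\beta$ into a $G$-torsor really coincides with the action inherited from the first factor of $V\times_X Y$, so that the notions of $G$-equivariance on the two sides of the bijection line up. Once this identification is in hand, the rest of the argument is a routine diagram chase.
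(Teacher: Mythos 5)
Your proof is correct. It differs in presentation from the paper's: the paper embeds the diagram into fpqc sheaves on $\Spec k$ and reduces the claim to its set-theoretic analogue evaluated on $S$-points, whereas you argue directly with the schemes, using the universal property of the fibre product $W = V\times_X Y$ for the direction $\overline f \mapsto f$ and fpqc descent along the cover $\alpha\colon V\to X$ (which exhibits $X$ as the quotient sheaf $V/G$) for the direction $f\mapsto\overline f$. The mathematical content is the same --- both arguments ultimately rest on $\alpha$ presenting $X$ as $V/G$ in the fpqc topology --- but your version makes explicit the descent step that the paper's sectionwise reduction leaves implicit; this is worthwhile, since $\alpha(S)\colon V(S)\to X(S)$ need not be surjective for an individual $S$, so the ``set-theoretic analogue'' cannot simply be applied one $S$ at a time without the sheaf-theoretic gluing your descent argument supplies. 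The one point you flag but do not fully verify --- that the $G$-action on $W$ making $\beta$ a torsor coincides with the action on the first factor of $V\times_X Y$ --- is immediate from the hypotheses: $\pi$ is $G$-equivariant and $\beta$ is $G$-invariant (being the structure map of a torsor), so under the identification $w\mapsto(\pi(w),\beta(w))$ one has $g\cdot(v,y)=(g\cdot v,y)$.
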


\begin{proof} 
It is enough to prove the corresponding statement for sheaves over the large fpqc site of $\Spec k$.
To that end, it is enough to show that for every $k$-scheme $S$,
there is a unique (and hence natural in $S$, $X$, $Y$) bijection between
$G(S)$-equivariant $\pi(S)$-sections   $f:V(S)\to W(S)$
and $\overline{\pi}(S)$-sections $\overline{f}:X(S)\to Y(S)$ such that 
$\overline{f}\circ \alpha(S)=\beta(S)\circ f$ whenever $f$ corresponds to $\overline{f}$.
That is, it is enough to prove the set-theoretic analogue of the lemma, which is straightforward.
\end{proof} 

\begin{lemma} \label{lem.mor} Let $R$ be a $k$-ring and $T \to \Spec R$ be a $G$-torsor. Then there is a natural isomorphism
between the twisted $R$-(multi)algebra $B = {\, }^T A$ and the $R$-(multi)algebra 
$\Mor_G(T, V_1)$ of $G$-equivariant morphisms $T \to V_1$. Here ``natural" means 
that if $S$ is an
$R$-ring, then this isomorphism is compatible with the base change from $R$ to $S$.
\end{lemma}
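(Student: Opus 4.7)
The plan is to apply Lemma \ref{lem.torsors} to the cartesian square
\[
\begin{array}{ccc}
T \times_k V_1 & \longrightarrow & T \\
\downarrow & & \downarrow \\
{}^T V_1 & \longrightarrow & \Spec R
\end{array}
\]
where the top horizontal map is the first projection, the vertical maps are $G$-torsors (with $G$ acting diagonally on $T\times_k V_1$), and the bottom map is the structure morphism of the twisted variety. First I would verify that this square is cartesian: this can be checked after base change along a trivializing fpqc cover $\Spec R' \to \Spec R$ for $T$, where $T$ becomes $G\times \Spec R'$, and the square reduces to the obvious one.

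Next, by Lemma \ref{lem.torsors} applied to this cartesian square, sections of ${}^T V_1 \to \Spec R$ are in natural bijection with $G$-equivariant sections of $T\times V_1 \to T$. A section of the first projection $T \times V_1 \to T$ has the form $t\mapsto (t,f(t))$ for a morphism $f\colon T\to V_1$, and the $G$-equivariance of the section under the diagonal action translates precisely into the condition that $f$ is $G$-equivariant. Thus we obtain a natural bijection
\[
{}^TA \;=\; \Hom_R(\Spec R, {}^T V_1) \;\longleftrightarrow\; \Mor_G(T, V_1).
\]

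Then I would upgrade this bijection to an isomorphism of (multi)algebras. Each structural operation $m_i\colon V_1^{r_i}\to V_1$ of $A$ is by construction $G$-equivariant, so it twists to a morphism $({}^T V_1)^{r_i} \cong {}^T(V_1^{r_i}) \to {}^T V_1$ defining the corresponding operation on ${}^TA$. On $\Mor_G(T,V_1)$, the operation is pointwise: $m_i(f_1,\dots,f_{r_i})(t) = m_i(f_1(t),\dots,f_{r_i}(t))$, which is again $G$-equivariant. Chasing the construction of the bijection through the functorial correspondence of Lemma \ref{lem.torsors} (applied simultaneously to $V_1$ and to $V_1^{r_i}$) shows that these two operations are intertwined; $R$-linearity in particular arises from the trivial operations coming from $\mathbb{A}^1$ with trivial $G$-action.

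Finally, naturality in $R$ is automatic from the functoriality in Lemma \ref{lem.torsors}: for any $R$-ring $S$, base-changing the cartesian square along $\Spec S\to \Spec R$ yields the analogous cartesian square for $T_S := T\times_{\Spec R}\Spec S$, and the resulting bijections fit into a commutative diagram with the obvious restriction maps ${}^TA\otimes_R S \to {}^{T_S}A$ and $\Mor_G(T,V_1) \to \Mor_G(T_S,V_1)$. The main (minor) obstacle is verifying the square is cartesian and checking that ``$G$-equivariant section of a first projection'' really corresponds to ``$G$-equivariant morphism'', both of which reduce to local computations after trivializing $T$.
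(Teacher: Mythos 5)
Your proposal is correct and follows essentially the same route as the paper: both apply Lemma~\ref{lem.torsors} to the cartesian square relating $T \times V_1 \to T$ and ${}^T V_1 \to \Spec R$ to get the bijection $B \leftrightarrow \Mor_G(T,V_1)$, then check compatibility with the (multi)algebra operations and base change by the same functoriality argument. No gaps.
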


The (multi)algebra structure on $\Mor_G(T, V_1)$ is induced from the (multi)algebra structure on $A$. For example, to add $G$-equivariant morphisms
$\phi_1 \colon T \to V_1$ and $\phi_2 \colon T \to V_1$, we compose $(\phi_1, \phi_2) \colon T \to V_1 \times V_1$ with the addition map $+ \colon V_1 \times V_1 \to V_1$.
 
\begin{proof}
Consider the cartesian diagram
\[  \xymatrix{ T \ar@{->}[d]_{\alpha}  &     & T \times V_r  \ar@{->}[d]^{\beta_r} \ar@{->}[ll]_{\pi_r}  \\
  \Spec R    &      &    {\, }^T V_r  \ar@{->}[ll]^{\overline{\pi_r}}  } \]
where $\pi_r \colon T \times V_r \to T$ is projection to the first component. Now set $r = 1$.
A $G$-equivariant morphism $\phi \colon T \to V_1$ gives rise to a $G$-equivariant section $(\id, \phi) \colon T \to T \times V_1$ of $\pi_1$.
The latter descends to a section $b \colon \Spec R  \to {\, }^T V_1$, which we view as an element of $B$. 
This gives us a map $F \colon \Mor_G(T, V_1) \to B$ taking $\phi$ to $b$. By Lemma~\ref{lem.torsors}, $F$ is an isomorphism of sets.

Using the definition of the (multi)algebra structure on $B = {\, }^T A$, one verifies that $F$ is in fact a
homomorphism, and thus an isomorphism, of (multi)algebras.  For example, to show that $F$ is a homomorphism of additive
groups, we consider the diagram:
\[  \xymatrix{ T \ar@{->}[d]_{\alpha}  &     & T \times V_2  \ar@{->}[d]^{\beta_2} \ar@{->}[ll]_{\pi_2} \ar@{->}[rr]^{(\id, +)} &  & T \times V_1 \ar@{->}[d]^{\beta_1}  \\
  \Spec R    &      &    {\, }^T V_2  \ar@{->}[ll]^{\overline{\pi_2}}  \ar@{->}[rr]^{+} & & {\, }^T V_1 } \]
Note that by definition, $V_2 = V_1 \times V_1$, and thus ${\, }^T V_2 = {\, }^T V_1 \times {\, }^T V_1$.  
Similar arguments with $+$ replaced by scalar multiplication and (multi)algebra operations in $A$ show that $F$ is a homomorphism of (multi)algebras. 

Finally, to show that $F$ is natural in $R$, we consider a $k$-ring morphism $R\to S$ and  examine the diagram
\[  \xymatrix{ 
T_S \ar@{->}[d]_{\alpha_S} \ar@{->}[r] & T \ar@{->}[d]_{\alpha}  &     & T \times V_1  \ar@{->}[d]^{\beta_1} \ar@{->}[ll]_{\pi_1}    \\
 \Spec S  \ar@{->}[r] & \Spec R     &      &    {\, }^T V_1  \ar@{->}[ll]^{\overline{\pi_1}, }   } \]
 where the square on the left is Cartesian.
 Since ${\, }^{T_S} V_1 \iso ({\, }^T V_1) \times_{\Spec R } \Spec S $, 
 the $S$-points of the $R$-scheme ${\, }^T V_1$ are in a natural bijection with the $S$-points of the $S$-scheme ${\, }^{T_S} V_1$.  
\end{proof}

\begin{proof}[Proof of Proposition~\ref{prop.equivariant-map-corres}] 
By Lemma~\ref{lem.mor}, an $r$-tuple $b = (b_1, \ldots, b_r)$ of elements of $B = {\, }^T A$ may be viewed as an $r$-tuple of
$G$-equivariant maps $\phi_1, \ldots, \phi_r \colon T \to V_1$ or equivalently, as a $G$-equivariant morphism 
$\phi = (\phi_1, \ldots, \phi_r) \colon T \to V_r = (V_1)^r$. It remains to prove the following.

\smallskip
{\bf Claim.} $b_1, \ldots,b_r$ generate $B$ as an $R$-algebra if and only if $\phi(T)$ is contained in $U_r$.

\smallskip
To prove this claim, let $S/R$ be a faithfully flat ring extension that splits $T$. 
By functoriality ${\, }^{T_S} A = ({\, }^T A) \otimes_R S$. Since $S$ is faithfully flat over $R$, the elements $b_1, \ldots, b_r$
generate ${\, }^T A$ over $R$ if and only if they generate ${\, }^{T_S} A$ over $S$. Without loss of generality, 
we may replace $R$ by $S$ and
thus assume that $T$ is split over $R$, i.e., that $T$ has a section $\psi \colon \Spec R \to T$.
This induces an isomorphism $G\times\Spec R\to T$ given sectionwise by
$(g,x)\mapsto g\cdot \psi(x)$, and an associated algebra isomorphism
$\Mor_G(T,V_1)\iso B= {}^T A\iso {}^{G\times\Spec R}A= A_R$
which, after
unfolding the definitions,
assumes the following simple form: a $G$-equivariant morphism $\phi_i \colon T \to V_1$ corresponds to the element 
$b_i = \phi_i \circ \psi \colon \Spec R  \to V_1$ of $A_R$. By Lemma~\ref{lem:Ur-sections}, $b_1, \ldots, b_r$ generate $B = A_R$ if and only if
the image of 
\[ b = (b_1, \ldots, b_r) = (\phi_1 \circ \psi, \ldots, \phi_r \circ \psi) = \phi \circ \psi \colon \Spec R \to V_r \]
lies in $U_r$. Since $G \times_k \Spec R$ is isomorphic to $T$ via $(g, x) \mapsto g \cdot \psi(x)$, 
$\phi \colon T \to V_r$ is a $G$-equivariant map, and $U_r$ is a $G$-invariant subvariety of $V_r$,
we see that $\phi(T)$ lies in $U_r$ if and only if  $(\phi \circ \psi)(\Spec R) = b(\Spec R)$ lies in $U_r$. This completes the proof of the claim and thus of
Proposition~\ref{prop.equivariant-map-corres}.
\end{proof}

\section{Versal group actions}
\label{sect.versal}

Let $k$ be a field, $G$ be a group scheme over $k$, and $\catC$ be a class of $k$-schemes. 
A $G$-scheme is a $k$-scheme equipped with a $G$-action $G\times_{  k}X\to X$.
We will say that a $G$-scheme $X$ is \emph{versal for $\catC$} if for every
$G$-torsor $\tau \colon T \to Y $ with $Y\in\catC$, there exists a $G$-equivariant $k$-morphism $T \to X$.
We will be particularly interested in the the class $\catC$ 
of affine $k$-schemes  $Y = \Spec R $, where $R$ is a finite type $k$-algebra of Krull dimension $\leqslant d$. 
If $X$ is versal with respect to this class of $k$-schemes, we will say that $X$ is \emph{$d$-versal}.

\begin{remark} The notion of versality has been previously studied in the case where $\catC$ consists of schemes of the form $\Spec K $ for some field $K$
containing $k$; see,~\cite[Section 5]{serre-ci} or~\cite{duncan-reichstein}. Our definition of $d$-versality here is analogous to ``weak versality" 
in~\cite{duncan-reichstein}. For notational simplicity we will use the term ``$d$-versal" instead of ``weakly $d$-versal", even 
though the latter may be more consistent with the literature.
\end{remark}

\begin{prop} \label{prop.uriya's-conjecture}
Let $G$ be an affine algebraic group over an infinite field $k$, $\rho: G \to \GL(V)$ be  a faithful finite-dimensional representation, and
$Z$ be  a closed $G$-subscheme of $V$ of dimension $\leqslant \dim V-(d+1)$ for some  integer $d \geqslant 0$. Then $U=V- Z$
is a $d$-versal $G$-scheme. 
\end{prop}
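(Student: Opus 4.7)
The plan is to prove the proposition by a direct dimension count applied to the space of sections of the twisted vector bundle. Let $T \to \Spec R$ be a $G$-torsor with $R$ a finite-type $k$-algebra of Krull dimension at most $d$; I need to produce a $G$-equivariant $k$-morphism $T \to U$. By Lemma~\ref{lem.torsors} (applied to the cartesian diagram with $\alpha \colon T \to \Spec R$ and $\beta \colon T \times_k V \to {\,}^T V$), $G$-equivariant morphisms $T \to V$ are in natural bijection with sections of the structure map $\pi \colon {\,}^T V \to \Spec R$, and those landing in the open subscheme $U$ correspond precisely to sections whose image avoids the closed subscheme $W := {\,}^T Z \subseteq {\,}^T V$. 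Since $\rho$ is a linear representation, $\pi$ is a vector bundle of rank $n := \dim V$, so its $R$-module of sections $M$ is finitely generated; fix a finite generating set $s_1, \dots, s_N \in M$.

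The key step is to form the universal linear combination
\[ e \colon \A^N_k \times_k \Spec R \longrightarrow {\,}^T V, \qquad (\lambda_1, \dots, \lambda_N, x) \longmapsto \sum_{i=1}^N \lambda_i\, s_i(x), \]
set $\Sigma := e^{-1}(W)$, and bound $\dim \Sigma$ from above. For each $x \in \Spec R$, the morphism $e$ restricts on the $x$-fiber to a linear map $\kappa(x)^N \to ({\,}^T V)_x \cong \kappa(x)^n$, which is surjective because $s_1, \dots, s_N$ generate $M$. After choosing a faithfully flat extension $R \to R'$ that trivializes $T$, the base change $W \otimes_R R'$ becomes isomorphic to $Z \times_k \Spec R'$, so $\dim W_x \le \dim Z \le n - d - 1$ by hypothesis. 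Consequently each fiber of $\Sigma \to \Spec R$ has dimension at most $(N - n) + (n - d - 1) = N - d - 1$, and combined with $\Kdim R \le d$ this yields $\dim \Sigma \le N - 1$.

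Projecting $\Sigma$ to $\A^N_k$, the image has dimension at most $N - 1$ and is therefore contained in a proper closed subset of $\A^N_k$. Since $k$ is infinite there exists $\lambda = (\lambda_1, \dots, \lambda_N) \in k^N$ outside this subset, and for such $\lambda$ the section $s_\lambda := \sum_i \lambda_i s_i$ of $\pi$ has image disjoint from $W$; under the correspondence of the first paragraph, $s_\lambda$ yields the desired $G$-equivariant morphism $T \to U$. I expect the main technical obstacle to be the fiber-dimension bound on $\Sigma$: one must correctly interpret the fiberwise dimension of the possibly nonreduced twist ${\,}^T Z$ and invoke the standard inequality $\dim \Sigma \le \Kdim R + \max_x \dim \Sigma_x$ for morphisms of finite-type $k$-schemes, where $x$ ranges over the closed points of $\Spec R$.
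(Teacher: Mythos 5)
Your proof is correct, and it takes a genuinely different route from the paper's. Both arguments start from the same input --- the module of sections of the twisted bundle ${\,}^T V \to \Spec R$ is finitely generated (in the paper this is packaged as Lemma~\ref{lem.gln2}, producing a $G$-equivariant map $T \to \Mat^0_{n\times m}$) --- and in fact both end up producing the same kind of map: your generic linear combination $\sum_i \lambda_i s_i$ is precisely the first column of a generic right $\GL_m$-translate $r_s\circ F$ in the paper's notation. The difference lies entirely in how genericity is certified. The paper stays on the untwisted side and invokes the Kleiman Transversality Theorem for the right $\GL_m$-action on $\Mat^0_{n\times m}$; this forces a chain of reductions (passing to $\overline{k}$, replacing $G$ by the identity component of $G^{\mathrm{red}}$, treating irreducible components separately) and crucially uses the freeness of the left $G$-action on full-rank matrices to exclude the nonempty case of Kleiman's theorem. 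You instead descend to the twisted side via Lemma~\ref{lem.torsors}, form the incidence scheme $\Sigma \subset \A^N \times \Spec R$, and bound its dimension directly: the fibrewise surjectivity of the evaluation map plus $\dim({\,}^T Z)_x = \dim Z \leqslant \dim V - d - 1$ gives $\dim \Sigma \leqslant N-1$, so the projection to $\A^N$ is not dominant and an infinite field supplies a good $k$-point. Your argument is more elementary (no Kleiman, no reduction to algebraically closed or reduced connected $G$, no appeal to freeness of the action) and is insensitive to non-reducedness of $Z$ and non-smoothness of $G$, since only fibre dimensions enter; what the paper's formulation buys is that the entire construction remains phrased in terms of equivariant maps into spaces of matrix tuples, matching the framework used throughout the rest of the paper. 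The one step you flag as delicate --- the inequality $\dim\Sigma \leqslant \Kdim R + \max_x \dim\Sigma_x$ --- is indeed the standard fibre-dimension bound applied to each irreducible component of $\Sigma$ over the closure of its image, and your uniform bound on fibres over all (not just closed) points makes it go through.
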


\begin{remark} \label{rem.approximation}
It is well known that for any finite type affine group scheme $G$ over $k$ and any $d \geqslant 0$, there exists a faithful
representation $G\to \GL(V)$ and a closed $G$-invariant subvariety $Z\subseteq V$ of codimension $> d$ such that $V - Z$ is the
total space of a torsor $(V - Z) \to X$ for some scheme $X$; see~\cite[Lemma~9]{EdidinEquivariantintersectiontheory1998}.
In fact, we may even assume that $X$ is a quasi-projective variety; see~\cite[Remark~1.4]{Totaro}.
In particular, Proposition~\ref{prop.uriya's-conjecture} shows that $d$-versal $G$-schemes with a free action of $G$
exist for every $d \geqslant 0$ and every linear algebraic group $G$.
\end{remark}

\begin{remark}
  Proposition~\ref{prop.uriya's-conjecture} also holds for $Z=\emptyset$ with the convention that $\dim \emptyset = -\infty$. 
  More generally, if $0 \not \in Z$, then $V - Z$ is $d$-versal for every $d \geqslant 0$ simply because every
  $G$-torsor $T\to \Spec(R)$ admits the zero morphism $T\to V$ which is obviously $G$-equivariant. 
\end{remark}

Our proof of Proposition~\ref{prop.uriya's-conjecture} will rely on the following lemma.
Denote the affine space of $a \times b$-matrices over $k$ by $\Mat_{a \times b}$. If $b \geqslant a$,
we will denote the Zariski open subvariety of $a \times b$ matrices of rank $a$ by $\Mat_{a \times b}^{0}$.
We regard $\GL_n$ as an algebraic group over $k$. 

\begin{lem} \label{lem.gln2} Let $G$ be a closed subgroup scheme of $\GL_n$, $R$ be a noetherian $k$-ring, 
and $\tau \colon T \to \Spec R $ be a $G$-torsor. Then there exists an integer 
$m \geqslant n$ and a $G$-equivariant $k$-morphism $F \colon T \to \Mat_{n \times m }^0$.
Here $G$ acts on $\Mat_{n \times m}^0$ via multiplication on the left.
\end{lem}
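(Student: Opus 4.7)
The plan is to reduce the statement to the special case $G = \GL_n$, and then to recognize the desired morphism as encoding a finite generating set of a rank-$n$ projective $R$-module. I would proceed in three steps.

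First, I would extend $T$ along the inclusion $G \hookrightarrow \GL_n$ to the induced $\GL_n$-torsor $T' := T \times^{G} \GL_n$. The canonical morphism $\iota \colon T \to T'$ sending $t \mapsto [t,1]$ is $G$-equivariant with respect to the restriction of the left $\GL_n$-action on $T'$. Consequently, any $\GL_n$-equivariant morphism $F' \colon T' \to \Mat_{n \times m}^0$ yields, after composition, a $G$-equivariant morphism $F := F' \circ \iota \colon T \to \Mat_{n \times m}^0$. It thus suffices to construct $F'$.

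Next I would translate the existence of $F'$ into a statement about projective modules, paralleling the descent argument of Lemma~\ref{lem.mor} and Proposition~\ref{prop.equivariant-map-corres}. Let $V_1 = \mathbb{A}^n$ denote the standard representation of $\GL_n$, so that $\Mat_{n \times m} \isom V_1^m$ as $\GL_n$-schemes under the left multiplication action. The $\GL_n$-torsor $T'$ corresponds to a locally free $R$-module $P$ of rank $n$, namely the twist $P = ({\,}^{T'}V_1)(\Spec R)$. Unwinding the definitions gives a natural bijection between $\GL_n$-equivariant $k$-morphisms $T' \to \Mat_{n \times m}$ and $m$-tuples of elements of $P$; moreover, such an $m$-tuple has image inside the open subvariety $\Mat_{n \times m}^0$ of matrices of rank $n$ if and only if it generates $P$ as an $R$-module.

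Finally, since $R$ is noetherian, $P$ is finitely generated, so a generating $m$-tuple $(p_1, \ldots, p_m)$ exists for some $m$; the inequality $m \geqslant n$ is automatic because $P$ has rank $n$. The corresponding $\GL_n$-equivariant morphism $T' \to \Mat_{n \times m}^0$ then gives the required $F$. The only step that requires genuine work is the descent bijection in the second paragraph, but this is essentially the $\GL_n$-analogue of Lemma~\ref{lem.mor} applied to the representation $V_1^m$ of $\GL_n$, with the open condition tracked separately; no serious obstacle should arise.
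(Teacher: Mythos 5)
Your proposal is correct and follows essentially the same route as the paper: pass to the induced $\GL_n$-torsor $T' = T \times^G \GL_n$, identify $\GL_n$-equivariant maps $T' \to \Mat_{n\times m}^0$ with generating $m$-tuples of the associated rank-$n$ projective module, and restrict to $T$. The only cosmetic difference is that the paper avoids reproving the descent bijection by viewing $k^n$ as a non-unital algebra with zero multiplication, so that $\Mat_{n\times m}^0 = U_m$ and Proposition~\ref{prop.equivariant-map-corres} applies verbatim.
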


\begin{proof} We will view the $n$-dimensional $k$-vector space $A = k^n$ as a (non-unital) 
algebra with trivial multiplication, $a_1 \cdot a_2 = 0$ for every $a_1, a_2 \in A$. Then $\Mat_{n \times m}$
is the variety $V_m$ of $m$-tuples of elements of $A$ (each column of $\Mat_{n \times m}$ is identified with $A$),  
$\Mat_{n \times m}^0 $ is precisely the variety $U_m$ of $m$-tuples of generators of $A$, and $\Aut_k(A) = \GL(A) = \GL_n$.
Twisted $R$-forms of $A$ are precisely the projective $R$-modules $B$ of rank $n$. 

It is well known that 
every projective $R$-module of finite rank is generated by finitely many elements. In particular, consider
the $\GL_n$-torsor $T'= T \times^G \GL_n \to \Spec R$ and set $B = {\,}^{T'} \! A$ to be the $R$-module corresponding to $T'$.
Suppose $B$ is generated by $m$ elements. Then by Proposition~\ref{prop.equivariant-map-corres}, there exist
a $\GL_n$-equivariant morphism $F \colon T' \to U_m = \Mat_{n \times m}^0$. Restricting $F$ to $T \subset T'$, we obtain a desired
$G$-equivatiant map $T \to \Mat_{n \times m}^0$.
\end{proof}

\begin{proof}[Proof of Proposition~\ref{prop.uriya's-conjecture}]
Recall that $\rho:G\to \GL(V)$ is a closed embedding~\cite[Theorem~5.34]{milne_alg_groups}.
Let $R$ be a finite type $k$-ring of Krull dimension $\leqslant d$,
and let $\tau \colon T \to \Spec R $ be a $G$-torsor.
Our goal is to establish the existence of
a $G$-equivariant morphism $f:T\to U$.

Let $n=\dim_k V$.
We begin by identifying $V$ with $\Mat_{n\times 1}$, $\GL(V)$ with $\GL_n$ and $G$ with a closed subgroup of $\GL_n$ via $\rho$. 
Observe that $\GL(V) = \GL_n$ acts on $\Mat_{n \times m}$ by multiplication on the left, and so does $G \subset \GL_n$, whereas
$\GL_{m}$ acts on $\Mat_{n \times m}$ via multiplication on the right. These actions commute, and both of them
restrict to $\Mat_{n \times m}^0$. Note also that $\Mat_{n \times m}^0$ is a homogeneous space under the action
of $\GL_{m}$ for all $m \geqslant n$.

Lemma~\ref{lem.gln2} tells us that there exists a $G$-equivariant morphism $F \colon T \to \Mat_{n \times m}^0$ for some integer $m \geqslant n$.
Let $Z'$ be the preimage of $Z \subset V$ under the projection map $p \colon \Mat_{n \times m}^0 \to V$ to the first column.
(Recall that each column of $\Mat_{n \times m}$ is $G$-equivariantly identified with $V$.)
In other words, $Z'$ consists of $n \times m$ matrices of rank $n$ whose first column lies in $Z$.
For $s\in \GL_{m}({k})$, let $r_s:\Mat_{n\times m}^0\to \Mat_{n\times m}^0$ 
denote the $G$-equivariant morphism given by multiplying with $s$ on the right, and let
$sT$ denote $T$, regarded as an $\Mat_{n\times m}^0$-scheme via $r_s\circ F$. 

\smallskip
{\bf Claim.} Let $X$ and $Y$ be $G$-schemes and let $f:X\to\Mat_{n \times m}^0$ and $g:Y\to\Mat_{n \times m}^0 $ be $G$-equivariant morphisms. Suppose that
\[ \dim X + \dim Y < mn + \dim G. \]
Then there exists a non-empty Zariski open subset $W_{X, Y} \subset \GL_{m}$ such that $s X  \times_{\Mat^0_{n\times m}} Y = \emptyset$ for every $s \in W_{X, Y}({k})$.

\smallskip
Assume for a moment that this claim is established. 
Let $f:X\to \Mat_{n\times m}^0$ be the morphism $F:T\to \Mat_{n\times m}^0$ and let $g:Y\to  \Mat_{n\times m}^0$ be the closed immersion $Z'\to \Mat_{n\times m}^0$.
Then 
\[ \dim X = \dim \, T =  \dim  \Spec(R) + \dim G \leqslant d + \dim G, \]
whereas
\[\dim Y = \dim Z + n(m - 1) < (\dim V - d) + n(m -1) = n - d + n(m -1) = mn - d . \]
Thus, $\dim X + \dim Y < mn + \dim G$, and the claim applies to this choice of $X$ and $Y$. 
Since $\GL_{m}$ is rational, and $k$ is an infinite field, $W_{X, Y}$ has a $k$-point $s \in W_{X, Y}(k)$. 
After replacing $F$ by $r_s \circ F$, we may assume that the image of $T$ in $\Mat_{n \times m}^0$ 
is disjoint from $Z'$. Composing $F$ with the projection 
$p \colon \Mat_{n \times m}^0 \to V$ to the first column, 
we obtain a desired $G$-equivariant morphism $f = p \circ F \colon T \to U$. 

\smallskip
It thus remains to prove the claim. Consider the natural projection $\pi \colon I \to \GL_{m}$, where $I \subset \GL_{m} \times X \times Y$ 
is the incidence scheme consisting of triples $(u, x, y)$ such that $f(x)\cdot u = g(y)$. 
We take $W_{X,Y}$ to be the complement of the Zariski closure of $\pi(I)$ in $\GL_{m}$.
By definition, $W_{X, Y}$ is a Zariski open subset of $\GL_{m n}$ defined over $k$. We only need to show that it is non-empty, or equivalently, that $\pi$ is not dominant. Since $\pi$ is quasi-compact, by 
\cite[Proposition~2.3.7]{Grothendieck1965}, 
we may pass to the algebraic closure of $k$ in order to check this, and thus assume that $k$ is algebraically closed.

Since $k$ is algebraically closed,  $G^{\mathrm{red}}$ is a subgroup scheme of $G$ \cite[Corollary~1.39]{milne_alg_groups}.
It is harmless to replace $G$ with the identity connected component of $G^{\mathrm{red}}$, so assume
$G$ is reduced and connected.
This means  that
$X^{\mathrm{red}}$ and $Y^{\mathrm{red}}$ are $G$-varieties \cite[Proposition 2.5.1(1)]{Brion_2017_alg_groups}, so we may
replace $X$ and $Y$ with their reductions. 
We may also assume that $X$ and $Y$ are irreducible. 
Indeed, if $X_1, \ldots, X_{\alpha}$ are the irreducible components of $X$, and $Y_1, \ldots, Y_{\beta}$ are the irreducible components of $Y$,
then 
\[ W_{X, Y} = \bigcap_{i = 1}^{\alpha} \bigcap_{j = 1}^{\beta} \, W_{X_i, Y_{j}}. \]
As $\GL_m$ is irreducible, it is enough to show that each
$W_{X_i,Y_j}$ is nonempty. 
Note that each $X_i$ and each $Y_j$ is a
$G$-variety because $G$ is connected.

Write $M=\Mat^0_{n\times m}$.
At this point,
we may apply the Kleiman Transversality Theorem,
see~\cite[Corollary 4(i)]{kleiman} or~\cite[Theorem 10.8]{HartshorneAlgebraicGeometry1977},
which tells us that there exists an open dense subset $W'$ of $\GL_{m}$ 
such that for every $s\in W'(k)$, 
the fibre product $s X  \times_{M} Y$ is 
either 
\begin{enumerate}
    \item[(i)] empty, or
    \item[(ii)]
    nonempty and equidimensional of dimension $\dim X + \dim Y - \dim M < \dim G$.
\end{enumerate}
Since the $\GL_{m}$-action and the $G$-action on $\Mat_{n \times m}^0$ commute, the scheme $s X  \times_{M} Y$ admits a $G$-action, 
and since $f$ and $g$ are $G$-equivariant, the morphism 
$s X  \times_{M} Y\to \Mat^0_{n\times m}$ is also $G$-equivariant.
The left $G$-action
on $\Mat_{n \times m}^0$ has trivial stabilizers, so the same applies to $s X  \times_{M} Y$.
Thus, if $s X  \times_{M} Y$ is nonempty, then it is of dimension $\geq \dim G$, meaning that  (ii) is impossible.
Consequently, $sX \times_M Y = \emptyset$ for every $s \in W'(k)$, which means that $W'\subseteq W_{X,Y}$. Since $W'$ is dense in $\GL_{m}$, we have $W_{X,Y}\neq\emptyset$, as desired.
\end{proof}

\section{Proof of Theorem~\ref{thm.main2}}
\label{sect.upper} \label{sect.thm.main2}

With Propositions~\ref{prop.equivariant-map-corres} and~\ref{prop.uriya's-conjecture} at hand, we can deduce the following
theorem, from which all of our subsequent upper bounds will follow.
The notation is as in Section~\ref{sect.notations}.

\begin{thm} \label{thm.main} 
Let $A$ be a finite-dimensional algebra over an infinite field $k$,   $r\in \mathbb{N}\cup\{0\}$,
and   $c_A(r)$ be
the codimension of $Z_r$ in $V_r$, i.e.,
$c_A(r)= r \dim_k A - \dim Z_r$. Let $R$ be a $k$-ring of Krull dimension $d$. 
If $c_A(r) > d$, then $\gen_R(B) \leqslant r$ for any $R$-form $B$ of $A$.
\end{thm}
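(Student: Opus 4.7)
The plan is essentially to assemble the two key results of Sections~\ref{sect.equivariant-maps} and~\ref{sect.versal}: Proposition~\ref{prop.equivariant-map-corres} translates ``$\gen_R(B) \le r$'' into the existence of a $G$-equivariant morphism from the associated torsor $T$ to $U_r$, and Proposition~\ref{prop.uriya's-conjecture} produces such a morphism from the codimension hypothesis.

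Concretely, first I would pass to the $G$-torsor $T \to \Spec R$ associated to the $R$-form $B$, where $G = \Aut_k(A)$. By Proposition~\ref{prop.equivariant-map-corres}, it suffices to produce a single $G$-equivariant $k$-morphism $T \to U_r$; any such morphism corresponds to an $r$-tuple of generators of $B$ as an $R$-algebra.

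Next I would verify that $U_r$ is a $d$-versal $G$-scheme in the sense of Section~\ref{sect.versal}. For this I would apply Proposition~\ref{prop.uriya's-conjecture} to the faithful representation $\rho \colon G \hookrightarrow \GL(V_1) \hookrightarrow \GL(V_r)$, where $G$ acts diagonally on $V_r = V_1^r$; the representation is faithful because $G$ is by definition a closed subgroup of $\GL(A) = \GL(V_1)$. The closed $G$-subscheme $Z_r \subset V_r$ has dimension
\[
\dim Z_r \;=\; \dim V_r - c_A(r) \;\le\; \dim V_r - (d+1),
\]
by the hypothesis $c_A(r) > d$, which is exactly the hypothesis of Proposition~\ref{prop.uriya's-conjecture}. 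That proposition then yields the desired $G$-equivariant morphism $T \to V_r - Z_r = U_r$, and pulling this back through Proposition~\ref{prop.equivariant-map-corres} gives the $r$ generators of $B$ we wanted.

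There is no substantive obstacle; the proof is a direct combination of the two preceding propositions. The only housekeeping is the edge case $r = 0$, where $V_0$ is a point: either $A = 0$, in which case every form $B$ is also zero and the conclusion is trivial, or $A \neq 0$, in which case $Z_0 = V_0$ so $c_A(0) = 0$ and the hypothesis $c_A(r) > d$ fails for every $d \ge 0$, so there is nothing to prove. One should also note that the hypothesis that $k$ is infinite is used only inside Proposition~\ref{prop.uriya's-conjecture} (to guarantee $k$-points in a dense open subset of $\GL_m$), so no extra care is required here.
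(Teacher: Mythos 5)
Your proposal is correct and follows the same route as the paper: invoke Proposition~\ref{prop.uriya's-conjecture} with $V = V_r$ and $Z = Z_r$ to get $d$-versality of $U_r$, then translate the resulting $G$-equivariant morphism $T \to U_r$ into an $r$-tuple of generators via Proposition~\ref{prop.equivariant-map-corres}. The extra remarks on the faithfulness of $G \to \GL(V_r)$ and the $r=0$ edge case are harmless additions the paper leaves implicit.
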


\begin{proof} Assume that $c_A(r)>d$.
Then Proposition~\ref{prop.uriya's-conjecture}, with $V = V_r$ and $Z = Z_r$, tells us that $U_r$ is $d$-versal.
Let $R$ be a $k$-ring of Krull dimension $\leqslant d$, let $B$ be an $R$-form of $A$ and $T \to \Spec R$ be the $G$-torsor 
associated to $B$. Since $U_r$ is $d$-versal, there exists a $G$-equivariant morphism $T \to U_r$, and by Proposition~\ref{prop.equivariant-map-corres}, 
this means that $B$ can be generated by $r$ elements, i.e., $\gen_R(B) \leqslant r$. 
\end{proof}

We now proceed with the proof of Theorem~\ref{thm.main2}.
Let $\overline{k}$ be  an algebraic closure of $k$ and let
$n_{\max}$ be the largest possible dimension of a proper $\overline{k}$-subalgebra of $ A\otimes_k \overline{k}$.
Theorem~\ref{thm.main2} is an immediate consequence of Theorem~\ref{thm.main} in combination with part (b) of the following lemma.

\begin{lemma} \label{lem.main2} (a) $\dim Z_r \geqslant n_{\max} r$. (b) If $r > n_{\max}$, then 
$\dim Z_r \leqslant n_{\max} r + n_{\max}(n-n_{\max})$.
\end{lemma}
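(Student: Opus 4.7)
For part (a), the plan is to exhibit a large linear subspace of $V_r\otimes_k \overline{k}$ that lies entirely inside $Z_r\otimes_k\overline{k}$. Choose a proper $\overline{k}$-subalgebra $A'\subseteq\overline{A}$ of dimension $n_{\max}$. Any tuple $(a_1,\ldots,a_r)\in(A')^r$ generates (as an $\overline{k}$-algebra) a subalgebra of $A'$, hence a proper subalgebra of $\overline{A}$, so such a tuple does not generate $\overline{A}$. Thus the affine space $(A')^r$ embeds as a closed subscheme of $Z_r\otimes_k\overline{k}$ of dimension $n_{\max}r$. Since the Krull dimension is invariant under base change to $\overline{k}$, this yields $\dim Z_r\geqslant n_{\max}r$.

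For part (b), the plan is to stratify $Z_r\otimes_k\overline{k}$ by the dimension of a proper subalgebra of $\overline{A}$ containing the tuple, and to control each stratum using an incidence construction over the Grassmannian. For each integer $m$ with $0\leqslant m\leqslant n_{\max}$, let $\mathcal{Y}_m\subseteq\Gr(m,\overline{A})$ denote the closed subvariety of $m$-dimensional $\overline{k}$-subalgebras of $\overline{A}$; this is a closed subscheme of the Grassmannian because the condition that a subspace be closed under each of the multialgebra operations $m_i$ is Zariski-closed (in terms of the restricted tautological bundle). In particular $\dim\mathcal{Y}_m\leqslant m(n-m)$. Form the incidence variety
\[
\mathcal{I}_m \;=\; \bigl\{(a_1,\ldots,a_r,S)\;:\;S\in\mathcal{Y}_m,\ a_1,\ldots,a_r\in S\bigr\},
\]
whose projection to $\mathcal{Y}_m$ is the $r$-fold fibre product of the restricted tautological subbundle, a vector bundle of rank $mr$. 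Hence $\dim\mathcal{I}_m\leqslant m(n-m)+mr$. Every $\overline{k}$-point $(a_1,\ldots,a_r)\in Z_r$ is contained in some proper subalgebra $B\subsetneq\overline{A}$; taking $m=\dim_{\overline{k}}B\leqslant n_{\max}$, the point $(a_1,\ldots,a_r,B)$ shows that this tuple lies in the image of $\mathcal{I}_m\to V_r\otimes_k\overline{k}$. Consequently
\[
\dim Z_r \;\leqslant\; \max_{0\leqslant m\leqslant n_{\max}}\bigl(m(n-m)+mr\bigr).
\]

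It remains to carry out the optimisation. Setting $f(m)=m(n-m+r)$, a direct expansion yields the identity
\[
f(n_{\max})-f(m)\;=\;(n_{\max}-m)\bigl(n+r-n_{\max}-m\bigr).
\]
Under the hypothesis $r>n_{\max}$, the first factor is non-negative for $m\in\{0,\ldots,n_{\max}\}$, and the second factor satisfies $n+r-n_{\max}-m\geqslant (n-n_{\max})+(r-n_{\max})>0$, so $f$ attains its maximum on $\{0,\ldots,n_{\max}\}$ at $m=n_{\max}$. Substituting gives the desired bound $\dim Z_r\leqslant n_{\max}(n-n_{\max})+n_{\max}r$. The only delicate point is the verification that $\mathcal{Y}_m$ is a closed subscheme of $\Gr(m,\overline{A})$ and that $\mathcal{I}_m$ has the claimed dimension; both are routine in terms of the tautological bundle. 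The rest of the argument is a dimension count on an incidence variety followed by an elementary optimisation over the integers $0,\ldots,n_{\max}$.
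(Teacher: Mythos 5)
Your proof is correct and follows essentially the same route as the paper: part (a) is identical, and for part (b) the paper likewise covers $Z_r$ by incidence loci fibred over Grassmannians (it uses the rank strata $\Rank_{r,s}(A)$ over the full Grassmannian, observing that any tuple spanning more than $n_{\max}$ dimensions generates, where you instead fibre over the closed subvariety of $m$-dimensional subalgebras) and then performs the same optimisation of $m(n-m)+mr$ over $0\leqslant m\leqslant n_{\max}$ using $r>n_{\max}$ and $n>n_{\max}$. The two dimension counts coincide since you only use $\dim\mathcal{Y}_m\leqslant\dim\Gr(m,n)$, so the arguments are interchangeable.
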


\begin{proof} Passing from $k$ to $\overline{k}$ does not change $\dim Z_r$. Hence, we assume that $k$ is algebraically closed.
We will not distinguish between varieties and their sets of $k$-points.

(a) Let $A_{\max}$ be a proper subalgebra of $A$ of maximal dimension $n_{\max}$. If $a_1, \ldots, a_r \in A_{\max}$, then
the subalgebra generated by $a_1, \ldots, a_r$ is contained in $A_{\max}$ and hence the $r$-tuple $(a_1, \ldots, a_r)$ lies in $Z_r$. Thus
$\dim Z_r \geqslant \dim(\underbrace{A_{\max} \times \ldots \times A_{\max}}_{\text{$r$ times}}) = n_{\max} r$.

(b) Any $r$-tuple $a_1, \ldots, a_r \in A$ that spans a linear subspace of dimension $> n_{\max}$ in $A$ 
automatically generates $A$ as a $k$-algebra. Thus,
\begin{equation} \label{e.rank} Z_r^{\mathrm{red}} \subset \operatorname{Rank}_{r, 0}(A) \cup  \Rank_{r, 1}(A) \cup \ldots \cup \Rank_{r, n_{\max}}(A), 
\end{equation}
where here, if $V$ is an $n$-dimensional $k$-vector space, then $\operatorname{Rank}_{r, s}(V)$ stands for 
the subvariety of $V^r$ consisting of $r$-tuples $(v_1, \ldots, v_r)$
such that $\dim \Span_k\{ v_1, \ldots, v_r\} = s$. In our case, $V = A$, but from this point on the algebra structure of $A$ will play no role in the proof.
To compute the dimension of $\Rank_{r, s}(V)$,
let $\Gr(V, s)$ denote  the Grassmannian of
$s$-dimensional subspaces of $V$  and
consider the natural morphism
\[ \Rank_{r, s}(V) \to \Gr(V, s)  \]
sending $(v_1, \ldots, v_r)$ to $\Span_k\{v_1, \ldots, v_r\}$. 
The fibres of this map are readily seen to be irreducible of dimension $r s$;
hence, $\Rank_{r, s}(V)$ is irreducible of dimension
\[ \dim \Rank_{r, s}(V) = \dim \Gr(V, s) + rs = sr+ (n-s)s  
. \] 
In view of~\eqref{e.rank}, it remains to show that
the maximal value of the function \[ f(x) = xr + x(n - x) \] on the interval $[0, n_{\rm max}]$
is attained for $x = n_{\rm max}$. Since $n > n_{\rm max}$ and $r > n_{\rm max}$, we have
\[ f'(x) = r + n - 2x = (n - x) + (r - x) > 0 \]
for any $x \in [0, n_{\rm max}]$, and the desired conclusion follows. This completes the proof of Lemma~\ref{lem.main2} and
thus of Theorem~\ref{thm.main2}.
\end{proof}

\begin{remark} 
\label{rm.upper-bound}
If $nr - \dim Z_r > d$, then by Lemma~\ref{lem.main2}(a), $(n - n_{\max})r > d$ or equivalently, $r > \dfrac{d}{n - n_{\max}}$. 
This shows that the best upper bound we can hope to deduce from Theorem~\ref{thm.main} is of the form
\[ \gen(B) < \frac{d}{n- n_{\max}} + c, \]
where $B$ is an arbitrary $R$-form of $A$, $R$ is a $k$-ring of Krull dimension $d$, and $c$ is a constant 
which depends only on $A$ and not on $d$. In Theorem~\ref{thm.main2}, $c = n_{\max}$. In Sections~\ref{sect.azumaya} and~\ref{sect.octonion} we will find a smaller  
constant term $c$ for specific types of algebras. The linear term $\dfrac{d}{n - n_{\max}}$ in the upper bound of Theorem~\ref{thm.main2}
cannot be sharpened by this method.
\end{remark}

\section{Upper bounds on the number of generators for Azumaya algebras}
\label{sect.azumaya}

Throughout this section,  $\Mat_s$ denotes the affine
$k$-space of $s\times s$ matrices.

\subsection{Proof of Theorem~\ref{thm.azumaya}(a)}
\label{subsect.azumaya}
Our proof will rely on Theorem~\ref{thm.main} with $A = \Mat_s(k)$, the algebra of $s \times s$ matrices over $k$. 
Recall that Azumaya algebras of degree $s$ over $R$ are precisely the $R$-forms of $\Mat_s(k)$.
By definition, $Z_r \subset \Mat_s \times \ldots \times \Mat_s$ ($r$ times)
consists of $r$-tuples $(a_1, \ldots, a_r)$ of $s \times s$ matrices which do not generate $\Mat_s(k)$ as a $k$-algebra.
Our goal now is to compute the dimension of $Z_r$. In order to do so, we may pass to the algebraic closure of $k$ and thus assume that
$k$ is algebraically closed. Under this assumption, we may apply
Burnside's theorem: $Z_r^{\mathrm{red}}$ is the union of the $s-1$ subvarieties $X_1, \ldots , X_{s-1}$ of $\Mat_s^r$ determined by
\begin{equation} \label{e.X_i}
X_i(k) = 
\left\{ (a_1, ..., a_r) \, \bigg| \,
\begin{array}{c}
\text{$a_1, \dots, a_r \in \Mat_s(k)$ have a common}
\\
\text{$i$-dimensional invariant subspace}
\end{array}\right\}. 
\end{equation}
For simple proofs of Burnside's theorem, see~\cite{lam-burnside} or~\cite{rosenthal-burnside}.

\begin{prop} \label{prop.Z} 
(a) For every $i = 1, \ldots, s-1$, the variety
$X_i$ is a closed irreducible subvariety of $\Mat_s^r$ of dimension $r s^2 - (r-1) i (s-i)$.

\smallskip
(b) $\dim Z_r = rs^2 - (r-1)(s-1)$. Equivalently, $c_{\Mat_s(k)}(r) = (r-1)(s-1)$.
\end{prop}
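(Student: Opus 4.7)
The plan is to realise each $X_i$ as the image of an explicit incidence variety. I would introduce
\[ \tilde{X}_i = \{(W, a_1, \ldots, a_r) \in \Gr(s, i) \times \Mat_s^r : a_j(W) \subseteq W \text{ for all } 1 \leq j \leq r\}, \]
together with the two projections $\pi_1 \colon \tilde{X}_i \to \Gr(s, i)$ and $\pi_2 \colon \tilde{X}_i \to \Mat_s^r$, where $\Gr(s,i)$ denotes the Grassmannian of $i$-planes in $k^s$. By construction, the image of $\pi_2$ is precisely $X_i$.

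The first step is to analyse $\pi_1$. Its fibre over any $W \in \Gr(s, i)$ is the $r$-fold product of the linear subspace of $\Mat_s$ consisting of matrices preserving $W$; in a basis adapted to $W$, such matrices form the block upper-triangular subspace of type $(i, s-i)$, which has dimension $s^2 - i(s-i)$. Since $\GL_s$ acts transitively on $\Gr(s, i)$ and equivariantly on $\tilde{X}_i$, the map $\pi_1$ is a Zariski-locally trivial vector bundle. In particular $\tilde{X}_i$ is smooth and irreducible, and
\[ \dim \tilde{X}_i = \dim \Gr(s, i) + r\bigl(s^2 - i(s-i)\bigr) = rs^2 - (r-1)i(s-i). \]

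Because $\Gr(s, i)$ is complete, $\pi_2$ is proper, so its image $X_i$ is a closed irreducible subvariety of $\Mat_s^r$ satisfying $\dim X_i \leq \dim \tilde{X}_i$. The main task is to establish equality, equivalently that $\pi_2$ is generically finite, and this is the step I expect to be the main obstacle. By upper semicontinuity of fibre dimension it suffices to exhibit a single point of $\tilde{X}_i$ whose $\pi_2$-fibre is finite. For $r \geq 2$ I would take $a_1 = \diag{\lambda_1, \ldots, \lambda_s}$ with pairwise distinct $\lambda_j$, take $a_2$ block upper-triangular with respect to $W := \Span(e_1, \ldots, e_i)$ with all entries of its upper-right $i \times (s-i)$ block nonzero, and set $a_3 = \cdots = a_r = 0$. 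Every common invariant $i$-subspace must then be $a_1$-invariant, hence a coordinate subspace $\Span(e_{j_1}, \ldots, e_{j_i})$; a short verification using the nonzero entries of $a_2$ rules out every coordinate subspace other than $W$ itself. The case $r = 1$ is immediate because $X_i = \Mat_s$ and the claimed formula evaluates to $s^2$. This yields part (a).

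For part (b), Burnside's theorem gives $Z_r^{\mathrm{red}} = \bigcup_{i=1}^{s-1} X_i$, so $\dim Z_r = \max_{i} \dim X_i = rs^2 - (r-1)\min_{1 \leq i \leq s-1} i(s-i)$. The function $i \mapsto i(s-i)$ on $\{1, \ldots, s-1\}$ is minimised at the endpoints, where it equals $s-1$, yielding $\dim Z_r = rs^2 - (r-1)(s-1)$ and hence $c_{\Mat_s(k)}(r) = (r-1)(s-1)$.
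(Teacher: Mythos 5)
Your proposal is correct and follows essentially the same route as the paper: the incidence correspondence over $\Gr(i,s)$, the dimension count via the fibration whose fibres are the block upper-triangular subspaces, properness of the projection to $\Mat_s^r$ to get closedness and irreducibility, and generic finiteness of that projection. The only (immaterial) difference is in the finiteness witness: the paper observes that at a general point $a_1$ has distinct eigenvalues and hence by itself admits only finitely many invariant subspaces, whereas you pin the fibre down to a single point by additionally constraining $a_2$.
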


Assume for a moment that Proposition~\ref{prop.Z} is established. 
Then setting $r = \Floor{\dfrac{d}{s - 1}} + 2$, we obtain
\[ c_{\Mat_s(k)}(r) = (r-1)(s-1) = \left(\Floor{\frac{d}{s - 1}} + 1\right) (s-1) > \frac{d}{s - 1} \cdot (s-1) = d.   \]
Theorem~\ref{thm.azumaya}(a) now follows from Theorem~\ref{thm.main}.
It remains to prove Proposition~\ref{prop.Z}. The case where $r = 1$ is obvious, so we will assume that $r \geqslant 2$.

\smallskip
(a) Again, we treat varieties as their sets of $k$-points. Consider the incidence variety
 \[ Y_i = \{(a_1, ..., a_r, W) \,  | \, a_1(W), \ldots, a_r(W) \subset W \}  \subset \Mat_s \times \ldots \times \Mat_s \times \Gr(i, s)  , \] 
equipped with the natural projections
\[ \xymatrix{  & Y_i \ar@{->}[dr]^{\pi_2} \ar@{->}[dl]_{\pi_1} & \\
V_r = \Mat_s^r    &  & \Gr(i, s).  } \]
Here, $\Gr(i, s)$ denotes the Grassmannian of $i$-dimensional subspaces in $k^s$. 
Note that $\pi_2$ is surjective. 
Since $Y_i$ is closed in $V_r\times \Gr(i,s)$ and $G(i,s)$ is projective,
the image of $Y_i$ under $\pi_1$ is closed in $\Mat^r_s$. This image is precisely $X_i$.

The fibre of $W\in \Gr(i,s)$ under $\pi_2$ consists of the $r$-tuples of matrices $(a_1, \ldots, a_r)$
such that $a_i(W) \subset W$ for each $a_i$. This means that in a suitable basis of $k^s$, the matrices $a_1, \dots, a_r$ 
are all block upper-triangular of the form
\[ \begin{bmatrix} A_{i \times i} & B_{i \times (s-i)} \\
0_{(s-i) \times i} & C_{(s-i) \times (s-i)} 
\end{bmatrix} \]
where $A_{i \times i}$ is an $i \times i$-matrix, $B_{i \times (s-i)}$ is an $i \times (s-i)$-matrix, $C_{(s - i) \times (s-i)}$ is an
$(s-i) \times (s-i)$-matrix, and $0_{(s-i) \times i}$ is the $(s-i) \times i$ zero matrix.
In particular, every fibre of $\pi_2$ is an affine space of dimension $r (s^2 - i(s-i))$. By the Fibre Dimension Theorem, 
$Y_i$ is irreducible of dimension $r(s^2 - i(s-i)) + \dim \on{Gr}(i, s) = r s^2 - (r-1) i (s-i)$. 
 
Now recall that $X_i \subset V_r$, is the image of $Y_i$ under $\pi_1$.  Since $Y_i$ is irreducible, so is $X_i$. It remains to show that
$\dim X_i = \dim Y_i$, i.e., the fibre of $\pi_2$ over $(a_1, \ldots, a_r) \in X_i$ in general position is finite. 
This fibre consists of $i$-dimensional subspaces, invariant under each of the matrices $a_1, \ldots , a_r$. If $(a_1, \ldots, a_r) \in X_i$
is in general position, we may assume that $a_1$ has distinct eigenvalues and hence, $a_1$ alone has only finitely many invariant 
subspaces in $k^n$. This shows that the fibre $\pi_2^{-1}(a_1, \ldots, a_r)$ is finite, 
and $\dim X_i=\dim Y_i=
rs^2-(r-1)i(s-i)$.

\smallskip
(b) It follows from part (a) that the components of $Z_r$ of the largest dimension are $X_1$ and $X_{s-1}$.
Thus $\dim Z_r = \dim X_1 = \dim X_{s-1} = rs^2 - (r-1)(s-1)$ and $c_{\Mat_s(k)}(r) = r \dim \Mat_s - \dim Z_r = (r-1)(s-1)$, as claimed.
This completes the proof of Proposition~\ref{prop.Z} and thus of Theorem~\ref{thm.azumaya}(a).
\qed

\begin{cor} \label{cor.azumaya}
Let $R$ be a $k$-ring of finite type and $B$ an Azumaya algebra of degree $s$. 

\smallskip
(a) If $\displaystyle s > 1+ \frac{1}{2}d$, where $d$ is the Krull dimension $R$,
then $\gen_R(B) = 2$.

\smallskip
(b) In particular, $\gen_R(\Mat_m(B)) = 2$ for all but finitely many positive integers $m$. 
\end{cor}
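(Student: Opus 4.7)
The plan is to derive both parts from Theorem~\ref{thm.azumaya}(a), combined with the elementary lower bound $\gen_R(B) \geqslant 2$ for any Azumaya algebra of degree $s \geqslant 2$.

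For the upper bound in part (a), I apply Theorem~\ref{thm.azumaya}(a) to obtain $\gen_R(B) \leqslant \Floor{d/(s-1)} + 2$; the hypothesis $s > 1 + \tfrac{1}{2} d$ (equivalently $2(s-1) > d$) is meant to force the floor term to vanish, yielding $\gen_R(B) \leqslant 2$. For the matching lower bound, I argue that $B$ cannot be generated as an $R$-algebra by a single element: if it were, $B$ would be a quotient of the polynomial ring $R[T]$ and hence commutative. But $B$ is Azumaya of degree $s \geqslant 2$, so after a faithfully flat extension $R \to S$ we have $B \otimes_R S \iso \Mat_s(S)$, which is non-commutative. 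This forces $B$ itself to be non-commutative, a contradiction. Combining the two bounds gives $\gen_R(B) = 2$.

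Part (b) is then immediate by applying part (a) to the matrix algebra $\Mat_m(B)$, which is itself Azumaya of degree $ms$ over $R$ (since $\Mat_m(B) \otimes_R S \iso \Mat_{ms}(S)$ after splitting). With $d$ fixed, the inequality $ms > 1 + \tfrac{1}{2} d$ of part (a) holds for all sufficiently large $m$, so $\gen_R(\Mat_m(B)) = 2$ for all but finitely many $m$.

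The main technical point to check is the first step of the upper bound argument in part (a): verifying that $\Floor{d/(s-1)} = 0$ under the stated hypothesis $s > 1 + \tfrac{1}{2} d$. If a gap appears here, one would refine the codimension estimate of Proposition~\ref{prop.Z} in the case $r = 2$, exploiting the $\PGL_s$-twist structure on $Z_2 \subset \Mat_s \times \Mat_s$ to obtain a sharper versality bound than the general one of Theorem~\ref{thm.main}.
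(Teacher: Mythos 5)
Your overall route is the same as the paper's: deduce (a) from Theorem~\ref{thm.azumaya}(a) and then (b) from (a). You also supply the lower bound $\gen_R(B)\geqslant 2$ (an Azumaya algebra of degree $s\geqslant 2$ is non-commutative, hence not a quotient of $R[T]$), which the paper leaves implicit; that part of your argument, and your reduction of (b) to (a) via $\Mat_m(B)$ being Azumaya of degree $ms$, are both correct. However, the technical point you flagged is a genuine gap, not a formality. The hypothesis $s>1+\tfrac12 d$ is equivalent to $d<2(s-1)$, which only gives $\Floor{d/(s-1)}\leqslant 1$, not $\Floor{d/(s-1)}=0$; for instance when $d=s-1$ the floor equals $1$ and Theorem~\ref{thm.azumaya}(a) yields only $\gen_R(B)\leqslant 3$. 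To make the floor vanish you need $d<s-1$, i.e.\ $s>d+1$, which is strictly stronger than the stated hypothesis.

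Your proposed fallback does not close the gap either: Proposition~\ref{prop.Z} computes $\dim Z_2$ exactly, giving $c_{\Mat_s(k)}(2)=s-1$, so Theorem~\ref{thm.main} with $r=2$ genuinely requires $s-1>d$; there is no sharper codimension estimate available at $r=2$. It is worth pointing out that the paper's own one-line proof suffers from the same defect: it invokes an inequality ``$\gen(B)\leqslant \frac{d}{s-1}+1$'' attributed to the theorem, but the bound actually proved there is $\Floor{d/(s-1)}+2$, which always strictly exceeds $\frac{d}{s-1}+1$. So either the corollary's hypothesis should read $s>d+1$, or an upper bound sharper than Theorem~\ref{thm.azumaya}(a) is required. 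With the hypothesis corrected to $s>d+1$, your argument goes through verbatim, including part (b), since $ms>d+1$ holds for all but finitely many $m$.
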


\begin{proof} (a) follows from the inequality $\gen(B) \leqslant \dfrac{d}{s - 1} + 1$ of Theorem~\ref{thm.main3}(a), and (b) is an immediate consequence of (a).
\end{proof}


\subsection{Azumaya algebras with involution}
\label{sect.azumaya-involution}

In the following proposition we consider $R$-algebras with involution $B$, and $\gen_R(B)$ denotes the minimal
cardinality of a generating set of $B$ when regarded as an algebra with involution.

\begin{prop} \label{prop.involution} Assume $k$ is infinite. Let $R$ be a finite type $k$-ring of Krull dimension $d$. 
\begin{enumerate}
\item[(a)] If $B$ is an Azumaya algebra of degree $s$ with orthogonal involution over $R$, then 
 \begin{gather*} \text{\phantom{and,\ }$ \gen_R(B) \leqslant \Floor{\dfrac{d+(s-2)}{2s-3}}+1$, if $s \neq 4$, and} \\
\text{$\gen_R(B) \leqslant \Floor{\dfrac{d+1}{4}}+1$, if $s = 4$.} \end{gather*} 

\item[(b)] If $B$ is an Azumaya algebra of even degree $s$ with symplectic involution over $R$,
 then 
\begin{gather*} \text{$\gen_R(B) \leqslant \Floor{\dfrac{d+(s-1)}{2s-3}}+1$, if $s \geqslant 8$,} \\
\text{$\gen_R(B) \leqslant \Floor{\dfrac{d+6}{9}}+1$, if $s = 6$,} \\
\text{\phantom{and,\ }$\gen_R(B) \leqslant \Floor{\dfrac{d+3}{4}}+1$, if $s = 4$, and}\\
\text{$\gen_R(B) \leqslant d + 2$, if $s = 2$.}
\end{gather*}
\end{enumerate}
\end{prop}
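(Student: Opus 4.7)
The plan is to apply Theorem~\ref{thm.main} to the multialgebra $A = (\Mat_s(k), \sigma)$, the split Azumaya algebra equipped with the specified orthogonal or symplectic involution $\sigma$, and to estimate the codimension $c_A(r) = r s^2 - \dim Z_r$. The only new ingredient compared with Proposition~\ref{prop.Z} is an involutive version of Burnside's theorem: a tuple $(a_1,\dots,a_r)$ generates $(A,\sigma)$ as an algebra with involution if and only if the extended tuple $(a_1,\sigma(a_1),\dots,a_r,\sigma(a_r))$ generates $A$ as an ordinary $k$-algebra, which by Burnside fails exactly when there is a proper non-zero subspace $W \subset V = k^s$ invariant under all $a_j$ and all $\sigma(a_j)$. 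Fixing a non-degenerate (symmetric or skew-symmetric) bilinear form $b$ representing $\sigma$, the condition $\sigma(a_j) W \subset W$ is equivalent to $a_j(W^\perp) \subset W^\perp$, so $Z_r$ consists of those $(a_1,\dots,a_r)$ for which there exists $W$ with $0 \ne W \ne V$ and both $W$ and $W^\perp$ invariant under every $a_j$.

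I would then adapt the incidence-variety argument of Proposition~\ref{prop.Z}. Stratify $\Gr(i,s)$ by $\ell = \dim(W \cap W^\perp)$ and form, for each such pair $(i,\ell)$ with $i \le s/2$ (one may restrict to this range using the symmetry $W \leftrightarrow W^\perp$),
\[
Y_{i,\ell} = \bigl\{ ((a_1,\dots,a_r),W) : W \in \Sigma_{i,\ell},\ a_j W \subset W \text{ and } a_j W^\perp \subset W^\perp \text{ for all } j \bigr\},
\]
where $\Sigma_{i,\ell} \subset \Gr(i,s)$ is the corresponding $\OG(b)$- or $\Sp(b)$-orbit. The projection $Y_{i,\ell} \to \Sigma_{i,\ell}$ has fibres equal to an affine space of matrices with a prescribed block-zero pattern determined by the relative position of $W$ and $W^\perp$, whose dimension is computed by a short linear-algebra case analysis ($W$ non-degenerate, totally isotropic, or of mixed type). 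The other projection $Y_{i,\ell} \to V_r$ is generically finite (a single matrix in general position has only finitely many subspaces stabilized together with its adjoint), so as in Proposition~\ref{prop.Z} the image $X_{i,\ell} \subset V_r$ satisfies $\dim X_{i,\ell} = \dim Y_{i,\ell}$, and $\dim Z_r = \max_{i,\ell} \dim X_{i,\ell}$.

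Finally, I would maximize $\dim Y_{i,\ell}$ over $(i,\ell)$, substitute into $c_A(r) = r s^2 - \dim Z_r$, and apply Theorem~\ref{thm.main} to solve $c_A(r) > d$ for the smallest admissible $r$. For most $s$ the maximum is attained either at $i = 1$ with $W$ non-degenerate or at a maximal totally isotropic $W$, giving $c_A(r) \sim (r-1)(2s-3) + O(1)$ and the leading bound $\gen_R(B) \le \Floor{\tfrac{d+s-2}{2s-3}}+1$. The exceptional values $s=4$ for the orthogonal case and $s \in \{2,4,6\}$ for the symplectic case are precisely the small ranks where a different stratum dominates or ties, yielding slightly different additive constants and hence the separate floor expressions in the statement. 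I expect the main obstacle to be this case analysis: computing stabilizer dimensions for the isotropy strata of $W$, ensuring the maximum is correctly identified for each $s$, and matching the output with the exact thresholds claimed in the proposition.
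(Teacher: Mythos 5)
Your top-level strategy coincides with the paper's: both reduce the proposition to computing $\dim Z_r$ for $A = (\Mat_s(k),\sigma)$ and then feed the resulting codimension $c_A(r) = rs^2 - \dim Z_r$ into Theorem~\ref{thm.main}. The difference is in how that dimension is obtained. The paper's proof is a two-line citation: it imports the formulas $\dim Z_r = r(s^2-2s+3)+(s-2)$ for $s\neq 4$ (and $12r+1$ for $s=4$) in the orthogonal case from \cite[Proposition~4.10]{usra}, and the symplectic analogues from \cite[Proposition~4.13]{usra}. You instead sketch a self-contained derivation, and your reductions are correct: the subalgebra-with-involution generated by $(a_1,\dots,a_r)$ is the ordinary subalgebra generated by $(a_1,\sigma(a_1),\dots,a_r,\sigma(a_r))$, Burnside converts failure of generation into a common invariant subspace $W$ for this extended tuple, and $\sigma(a_j)W\subset W$ is equivalent to $a_j W^\perp \subset W^\perp$; the stratification of the incidence variety by $(\dim W, \dim(W\cap W^\perp))$ with generically finite projection to $V_r$ is exactly the right machine, and is in the spirit of Proposition~\ref{prop.Z}. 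What you have not done is the case analysis itself, and that is where every constant in the statement lives: the separate formulas for $s=4$ (orthogonal) and $s\in\{2,4,6\}$ (symplectic) are not decorative but record which stratum actually dominates at small rank, and getting $\Floor{\frac{d+s-2}{2s-3}}$ rather than, say, $\Floor{\frac{d+2s-3}{2s-3}}$ requires identifying the maximizing $(i,\ell)$ and its exact fibre dimension, not just the slope $2s-3$. So as written your argument establishes the qualitative bound $\gen_R(B) \le \frac{d}{2s-3} + O(1)$ with the correct linear term, but the precise additive constants claimed in the proposition remain to be verified by carrying out the maximization you defer.
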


\begin{proof} (a) Recall that Azumaya algebras with an orthogonal involution are forms of $A = (\Mat_s(k), t)$, where
  $t$ is the matrix transposition involution. By~\cite[Proposition 4.10]{usra},
\[ \dim Z_r = \begin{cases} r(s^2 - 2s + 3) + (s-2), & \text{if $s \neq 4$, and} \\
 12r + 1, & \text{if $s = 4$.}
\end{cases} \]
Substituting these formulas into the inequality of Theorem~\ref{thm.main}, we deduce part (a).

(b) is proved by the same argument with $t$ replaced by the standard symplectic involution of $\Mat_s$ (where $s$ is even). In this case the formula 
for $\dim Z_r$ is given by~\cite[Proposition 4.13]{usra}.
\end{proof}

\section{Upper bounds on the number of generators for octonion algebras}
\label{sect.octonion}

An octonion algebra $B$ over $R$ is an $R$-form of the split octonion algebra $A$, as defined in e.g., \cite[Section 2]{Blij1959}.

\begin{prop}
\label{prop.octonion}
Let $k$ be a field and $A$ be the split octonion algebra over $k$. Then
\begin{enumerate}
    \item[(a)] for any $r \ge 3$, the scheme $Z_r$ of $r$-tuples not generating $A$
    (see Section~\ref{sect.notations}) is irreducible of dimension $6r + 5$.
    \item[(b)]
    If $k$ is infinite,
    $R$ is a $k$-ring of Krull dimension degree $d$, and $B$ is an octonion $R$-algebra, then  $\gen(B) \leqslant \Floor{\dfrac{d + 1}{2}} + 3$.
\end{enumerate}
\end{prop}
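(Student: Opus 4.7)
The plan is to reduce part (b) to an application of Theorem~\ref{thm.main} by computing $\dim Z_r$ explicitly, in parallel with the proof of Theorem~\ref{thm.azumaya}(a). The dimension and irreducibility of $Z_r$ are insensitive to base change to the algebraic closure, so I will assume throughout that $k = \overline{k}$.

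The technical heart of part (a) is the classification of the maximal proper subalgebras of the split octonion algebra $A$: I claim they have dimension $6$, form a single $G_2$-orbit $Y \subset \Gr(6,A)$, and that $\dim Y = 5$. To construct them, fix a non-trivial idempotent $e_1 \in A$ with $e_2 = 1 - e_1$, giving the Peirce decomposition $A = k e_1 \oplus V \oplus V^* \oplus k e_2$ with $V = e_1 A e_2$ and $V^* = e_2 A e_1$, each of dimension $3$. Using the Zorn vector-matrix model, a direct verification shows that $B := k e_1 \oplus V_0 \oplus W_0 \oplus k e_2$ is closed under multiplication whenever $V_0 \subset V$ and $W_0 \subset V^*$ are $2$-dimensional subspaces satisfying $W_0 \supset V_0 \wedge V_0$ (identifying $\Lambda^2 V \cong V^*$ via the cross product); the symmetric condition $V_0 \supset W_0 \wedge W_0$ is then automatic. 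Counting parameters yields $6$ dimensions for the idempotent $e_1$, $2$ for $V_0 \in \Gr(2,V)$, and $1$ for $W_0$ containing the prescribed line, minus a $4$-dimensional overcount from the variety of idempotents inside each $B$, giving $\dim Y = 5$; transitivity of the $G_2$-action gives irreducibility. A separate argument, using Artin's theorem for subalgebras generated by two elements and a Peirce-based analysis for larger ones, shows that no $7$-dimensional subalgebra of $A$ exists and that every proper subalgebra is contained in some $B \in Y$.

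With this classification in hand, form the incidence variety
\[ \mathcal{I}_r = \{(B,(a_1,\dots,a_r)) \in Y \times V_r : a_1,\dots,a_r \in B\}, \]
a rank-$6r$ vector bundle over $Y$ and hence irreducible of dimension $6r+5$. The projection to $V_r$ factors through $Z_r$ (tuples in a proper subalgebra cannot generate $A$) and is set-theoretically surjective onto $Z_r$ by the classification above. For $r \geqslant 3$ I would exhibit an explicit $3$-tuple in $B$ generating $B$---for instance $(v_1 + v_2^*, v_2, v_3^*)$ in the Peirce basis above, whose iterated products successively yield $e_1$, $e_2$, and then the remaining Peirce generators---so the generic fiber of $\mathcal{I}_r \to Z_r$ is a single point. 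Consequently $\dim Z_r = 6r + 5$, and $Z_r$ is irreducible as the closed image of an irreducible variety under a proper morphism. Part (b) is then immediate from Theorem~\ref{thm.main}: setting $r = \Floor{(d+1)/2} + 3$ we compute $c_A(r) = 8r - (6r+5) = 2r - 5 \geqslant 2\Floor{(d+1)/2} + 1 \geqslant d+1 > d$, so $\gen_R(B) \leqslant r$ for every octonion $R$-algebra $B$.

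The main obstacle is the subalgebra classification: verifying that $6$ is indeed the maximum dimension of a proper subalgebra of $A$, that the $6$-dimensional ones form a single $G_2$-orbit, and that every smaller proper subalgebra embeds into a $6$-dimensional one. The underlying computations are elementary via Zorn matrices and the Peirce decomposition, but keeping track of the orbit structure and the containment claim requires case-by-case care.
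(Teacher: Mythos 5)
Your overall strategy is the same as the paper's: identify the $6$-dimensional maximal subalgebras of the split octonions as a $5$-dimensional irreducible family $Y$ (a $G_2$-orbit), show each such subalgebra is generated by $3$ elements so that the map from the incidence variety to $Z_r$ is generically finite, conclude $\dim Z_r = 6r+5$ by the fibre dimension theorem, and feed $c_A(r) = 2r-5$ into Theorem~\ref{thm.main}; your arithmetic for part (b) is exactly the paper's. (The paper runs the incidence correspondence in the other direction, via a morphism $Z_r^{(6)} \to \Gr(A,6)$ defined by Pl\"ucker coordinates, but that is cosmetic.)

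The one genuine gap is the one you flag yourself: the classification of proper subalgebras. Your Peirce/Zorn construction of the $6$-dimensional subalgebras and the parameter count $6+2+1-4=5$ are correct, but the claims that no $7$-dimensional subalgebra exists, that every $6$-dimensional subalgebra has the stated form (in particular contains a nontrivial idempotent), and that every proper subalgebra sits inside one of them are all left unproved, and they are not routine. The paper does not reprove any of this: it invokes Racine's classification of maximal subalgebras of octonion algebras \cite[Theorem 5]{racine}, which (over $\overline{k}$, where there are no quaternion division subalgebras) says the maximal subalgebra is unique up to $G_2$-translation and is the $6$-dimensional sextonion algebra. It then gets $\dim Y = 5$ not by your idempotent-fibration count but by observing that $A_{\max}^{\perp}$ is a $\gamma$-isotropic $2$-plane, so the stabilizer of $A_{\max}$ is a $9$-dimensional parabolic of $G_2$ by \cite[Proposition A.5]{anderson}, whence $\dim Y = 14 - 9 = 5$. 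If you want to avoid redoing Racine's case analysis, cite these results; otherwise your write-up must actually carry out the containment and non-existence arguments you defer. One small additional point: to conclude irreducibility of $Z_r$ from surjectivity of $\mathcal{I}_r \to Z_r$ you do not need properness (which would require knowing $Y$ is closed in $\Gr(6,A)$, i.e., that $H$ is parabolic); it is enough that $Z_r$ is closed and equals the closure of the image of the irreducible variety $\mathcal{I}_r$.
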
 

Part (b) is an immediate consequence of part (a) and Theorem~\ref{thm.main}. To prove part (a), we may assume without loss of generality that our base field $k$ is algebraically closed. 
Again, we will freely identify varieties with their sets of $k$-points.
Recall that the automorphism group of $A$ is the exceptional group $G_2$.
Since $k$ is algebraically closed, $A$ has no quaternion division $k$-subalgebras, and thus~\cite[Theorem 5]{racine} tells us that,
up to translations by elements of $G_2$, there is a unique maximal subalgebra $A_{\max} \subset A$. 
The subalgebra $A_{\max}$ is $6$-dimensional; it is sometimes called the algebra of 
\emph{sextonions}; 
see~\cite{westbury}. We will use the following description of $A_{\max}$ from~\cite{racine}.
Choose a $k$-basis $x_i, y_i$ for the octonion algebra
$A$, where $i = 0, 1, 2, 3$ and $x_i, y_i$ satisfy the relations \cite[(14) on p.~165]{racine}. Then $A_{\max}$ is spanned by
$x_0, x_1, x_2, y_0, y_1, y_3$. 

\begin{lem} \label{lem.octonion1}
(a) $A_{\max}$ is generated by three elements, 

\smallskip
(b) the stabilizer $H$ of $A_{\max}$ in $G_2$ is a $9$-dimensional parabolic subgroup of $G_2$.
\end{lem}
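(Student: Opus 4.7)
The plan is to prove the two parts of Lemma~\ref{lem.octonion1} somewhat independently, leveraging the explicit description of $A_{\max}$ from Racine's paper and the homogeneous space structure for $G_2$.

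\smallskip
\textbf{Part (a).} I would work directly with the basis $x_0, x_1, x_2, y_0, y_1, y_3$ and the multiplication relations \cite[(14)]{racine}. Since $A_{\max}$ is $6$-dimensional, the best hope is to find three elements whose iterated products span $A_{\max}$. A natural candidate is to pick a pair generating a quaternion subalgebra of $A_{\max}$ (noting that a quaternion algebra over an algebraically closed field needs two generators), and then adjoin a single element lying outside that quaternion subalgebra. Racine's relations exhibit $A_{\max}$ as an extension of a quaternion subalgebra $Q=\lspan{x_0,x_1,x_2,y_0}$ by the two-dimensional nilpotent ideal $J=\lspan{y_1,y_3}$, which is a $Q$-bimodule on which the multiplication is governed by the ambient octonion product. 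Thus I would take, say, two elements of $Q$ that generate $Q$ as a $k$-algebra, together with one element of $J$, and compute, via the multiplication table, that all six basis elements appear among the resulting words of bounded length. The computation is purely mechanical.

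\smallskip
\textbf{Part (b).} The strategy is to identify the $G_2$-orbit $G_2\cdot A_{\max}$ in the Grassmannian $\Gr(6,A)$ with a known $5$-dimensional projective homogeneous space for $G_2$, and then conclude $\dim H=14-5=9$ with $H$ parabolic. By \cite[Theorem 5]{racine} every maximal subalgebra is a $G_2$-translate of $A_{\max}$, so the orbit $G_2\cdot A_{\max}$ coincides with the full locus of maximal subalgebras inside $\Gr(6,A)$. The nilpotent radical of $A_{\max}$ is a $2$-dimensional subspace $J\subset A$ which is totally isotropic for the octonion norm form, and the assignment $A_{\max}\mapsto J$ is $G_2$-equivariant. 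This gives a $G_2$-equivariant map from $G_2\cdot A_{\max}$ to the variety of totally isotropic $2$-planes in $A$ compatible with the octonion structure; the latter is one of the two standard $5$-dimensional projective homogeneous spaces of $G_2$. I would verify that this map is an isomorphism onto its image (injectivity follows from reconstructing $A_{\max}$ as the idealizer or orthogonal complement of $J$ inside $A$), and hence $G_2\cdot A_{\max}$ is projective of dimension $5$. From this, $G_2/H$ is projective of dimension $5$, so $H$ is a parabolic subgroup of $G_2$ of dimension $9$.

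\smallskip
The main obstacle is the identification in part (b): one must show cleanly that the radical map $A_{\max}\mapsto J$ is a closed embedding of $G_2\cdot A_{\max}$ into the projective variety of isotropic $2$-planes, since dimension alone does not yield parabolicity without projectivity. Both the injectivity (recovering $A_{\max}$ from $J$) and the transitivity of $G_2$ on isotropic $2$-planes need to be checked explicitly, but both are standard consequences of the octonion/$G_2$ dictionary and Racine's explicit model. Part (a), by contrast, is a direct but tedious calculation using the multiplication table, and I would not expect surprises there.
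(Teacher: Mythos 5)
Your proposal is correct in outline and follows the paper's proof quite closely, with one substantive difference in part (b). For part (a) the paper does exactly what you describe, taking the two generators $x_1,y_1$ of the split quaternion subalgebra together with $x_2$ and checking in three lines from Racine's relations that $x_0=-x_1y_1$, $y_0=1-x_0$ and $y_3=x_1x_2$, so the "mechanical computation" you defer is short; note however that your identification of the pieces is off: in Racine's model the quaternion subalgebra is spanned by the hyperbolic pairs $(x_0,y_0),(x_1,y_1)$ and the norm-isotropic $2$-plane is $L=\Span_k(x_2,y_3)$ (with $x_2^2=y_3^2=x_2y_3=0$), not $\Span_k(y_1,y_3)$ as you wrote. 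For part (b) the reduction is the same as the paper's: since $G_2$ preserves the norm form and $A_{\max}=L^{\perp}$, $L=A_{\max}^{\perp}$, the stabilizer of $A_{\max}$ equals the stabilizer of the $\gamma$-isotropic $2$-plane $L$ (this is your "reconstruct $A_{\max}$ from $J$" step). The difference is in how one finishes: the paper simply cites \cite[Proposition A.5]{anderson} for the fact that the stabilizer of a $\gamma$-isotropic $2$-plane is a $9$-dimensional parabolic, whereas you propose to re-derive this by showing $G_2$ acts transitively on the projective $5$-dimensional variety of $\gamma$-isotropic $2$-planes and invoking the characterization of parabolics via completeness of $G/H$. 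Your route is self-contained but shifts the burden onto proving transitivity and identifying that orbit variety, which is precisely the content of the cited result; either way the conclusion $\dim H=14-5=9$ follows.
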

 
\begin{proof}
(a) It suffices to show that $A_{\max}$ is generated by $x_1$, $x_2$ and $y_1$. Indeed, let $A_0$ be the subalgebra of $A_{\max}$ generated by
$x_1$, $x_2$ and $y_1$. Then the relations~\cite[(14) on p.~165]{racine} tell us that
$x_0 = - x_1 y_1$, $y_0 = 1 - x_0$ and $y_3 = x_1 x_2$ also lie in $B_0$. Thus $A_0 = A_{\max}$, as desired.

(b) Recall that $(x_i, y_i)$ are, by construction,
mutually orthogonal hyperbolic pairs relative to the norm form in $A$ for $i = 0, 1, 2, 3$.
This implies that 
\begin{equation} \label{e.complement} 
\text{$A_{\max}^{\perp} = L$, where $L = \Span_k(x_2, y_3)$, and conversely, $L^{\perp} = A_{\max}$.}
\end{equation}
The multiplication table
in \cite[(14), p. 165]{racine} tells us that $x_2^2 = y_3^2 = x_2 y_3 = 0$. This tells us that $L$ is $\gamma$-isotropic in the sense of D.~Anderson; 
see~\cite[Lemma 3.5]{anderson}. In view of \eqref{e.complement}, the stabilizer $H$ of $A_0$ in $G_2$ equals the stabilizer of $L$ in $G_2$.
By~\cite[Proposition A.5]{anderson}, the stabilizer of $L$ is a $9$-dimensional parabolic subgroup of $G_2$. 
\end{proof}

We are now ready to complete the proof of Proposition~\ref{prop.octonion}(a). Recall
our standing assumptions: $k$ is algebraically closed and $r \geqslant 3$. 

Let $Z_r^{\leqslant t}$ be the subvariety of $r$-tuples $(a_1, \ldots, a_r) \in A^r$ such that $a_1, \ldots, a_r$ generate a $k$-subalgebra 
of $A$ of dimension $\leqslant t$. More precisely, we choose a $k$-vector space basis $e_1, \ldots, e_8$ in $A$ and identify an element 
$x = x_1 e_1 + \ldots + x_8 e_8$ of $A$ with the row vector $(x_1, \ldots, x_8) \in k^8$. Then $Z_r^{\leqslant t}$ is the  Zariski closed subvariety of
$A^r$ cut out by the conditions
\begin{equation} \label{e.Zleqt} \operatorname{rank} \begin{bmatrix} p_1(a_1, \ldots, a_r) \\ \vdots \\ p_{t+1}(a_1, \ldots, a_r) \end{bmatrix} \leqslant t \end{equation}
for any $t+1$ monomials $p_1, \ldots, p_{t+1}$ in $r$ variables. Here $p_1, \ldots, p_{t+1}$ are non-commutative non-associative monomials, $p_i(a_1, \ldots, a_r) \in A$ represents a row of the $(t + 1) \times 8$ matrix in~\eqref{e.Zleqt}, and 
each entry of this matrix is a polynomial function in the coordinates 
of $a_1, \ldots, a_r$.

Now set $\displaystyle Z_r^{(t)} = Z_r^{\leqslant t} - Z^{\leqslant t-1}$ to be the variety 
of $r$-tuples $(a_1, \ldots, a_r)$ such that $a_1, \ldots, a_r$ generate a $k$-algebra of dimension exactly $t$. Since every proper subalgebra of $A$
is of dimension $\leqslant 6$, we have $Z_r = Z_r^{\leqslant 6}$. Recall that every proper subalgebra of $A$ is contained in a $6$-dimensional
subalgebra $S$, which is a translate of $A_{\max}$, and that by Lemma~\ref{lem.octonion1}(a), $A_{\max}$ (and thus $S$) is generated by $3$ (and thus $r$)
elements as a $k$-algebra. This tells us that
$Z_r^{(6)}$ is dense in $Z_r$.

It remains to show that $Z_r^{(6)}$ is an irreducible variety of dimension $6r + 5$. Indeed, consider the natural
morphism $f \colon Z_r^{(6)} \to \Gr(A, 6)$ taking an $r$-tuple $(a_1, \dots, a_r)$ 
to the $6$-dimensional subalgebra generated by $a_1, \ldots, a_r$. To see that $f$ is a morphism, note that $Z_r^{(6)}$ is covered by open subsets
\[ U_{p_1, \ldots, p_6} = \{ (a_1, \ldots, a_r) \, | \, p_1(a_1, \ldots, a_r) \wedge \ldots \wedge p_6(a_1, \ldots, a_r) \neq 0 \; \; 
\text{in} \; \; \Lambda^6(A) \} \]
indexed by $6$-tuples of monomials $p_1, \ldots, p_6$ in $r$ variables. On any $U_{p_1, \ldots, p_6} \neq \emptyset$, we define 
\begin{align*} f_{p_1, \ldots, p_6} \colon U_{p_1, \ldots, p_6} &\to \Gr(A, 6) \\
(a_1, \ldots, a_r) &\mapsto p_1(a_1, \ldots, a_r) \wedge \dots \wedge p_6(a_1, \ldots, a_r) \in \bbP(\Lambda^6(A)). 
\end{align*} 
Here we identify $\Gr(A, 6)$ with the variety of decomposable tensors in $\bbP(\Lambda^6(A))$ via the Pl\"ucker embedding. One readily sees from this definition that every $f_{p_1, \ldots, p_6}$ is a morphism. These morphisms agree on the overlaps: the image of $(a_1, \ldots, a_r) \in Z_r^{(6)}$ 
under any of them is the $6$-dimensional subalgebra generated by $a_1, \ldots, a_r$. Hence, they 
patch together to a morphism $f \colon Z_r^{(6)} \to \Gr(A, 6)$. By Lemma~\ref{lem.octonion1}(b), the image of $f$ is the irreducible 
projective homogeneous space $G_2/H$ of dimension $14-9 = 5$. The fibres of $f$ are isomorphic to Zariski open
subvarieties of $V_r$.
By Lemma~\ref{lem.octonion1}(a), these open subvarieties are not empty, and so they are irreducible of dimension $6r=r\dim A_{\max}$. Applying the Fibre Dimension Theorem to
$f$, we conclude that $Z_r^{(6)}$ is irreducible of dimension $6r + 5$, as claimed. This completes the proof of Proposition~\ref{prop.octonion}.
\qed

\section{A new proof of the generalized Forster bound}
\label{sect.forster}

As yet another application of Theorem~\ref{thm.main} we will now
give a short proof of the following variant of Theorem~\ref{thm.fr}.

\begin{thm} \label{thm.fr1} Assume that $k$ is infinite field, $A$ is a finite-dimensional $k$-(multi)algebra, 
$R$ is a finite type $k$-ring of Krull dimension $d$, and $B$ is an $R$-form of $A$. Then
$\gen_R(B) \leqslant d + \gen_k(A)$.
\end{thm}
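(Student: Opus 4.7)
Set $g = \gen_k(A)$ and $r = d+g$. By Theorem~\ref{thm.main}, it suffices to show $c_A(d+g) > d$. I plan to derive this from the stronger claim, proved by induction on $s$, that $c_A(s) \ge s - g + 1$ for every integer $s \ge g$; applied with $s = d+g$ this yields $c_A(d+g) \ge d+1$. The base case $s = g$ is immediate: since $A$ admits a generating $g$-tuple over $k$, $U_g$ is nonempty, so $Z_g$ is a proper closed subvariety of $V_g$ and $\codim Z_g \ge 1$.

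For the inductive step I need $\dim Z_{s+1} \le \dim Z_s + n - 1$ for every $s \ge g$. Projecting $V_{s+1} = V_s \times V_1 \to V_s$ gives the inclusion $Z_{s+1} \subseteq Z_s \times V_1$---failure of $(a_1,\dots,a_{s+1})$ to generate $A$ forces failure already of $(a_1,\dots,a_s)$---which yields $\dim Z_{s+1} \le \dim Z_s + n$, so the goal is to tighten this bound by $1$. For this it is enough to show that for each maximum-dimensional irreducible component $C$ of $Z_s$ the product $C \times V_1$ is not contained in $Z_{s+1}$; equivalently, that some $(a_1,\dots,a_s) \in C$ admits an extension $(a_1,\dots,a_s,a) \in U_{s+1}$.

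My approach to the last step exploits the stratification $Z_s = \bigcup_{A'} (A')^s$, where $A'$ ranges over maximal proper $k$-subalgebras of $A$. A top-dimensional component $C$ of $Z_s$ corresponds to a family of maximal proper subalgebras of largest possible dimension; the strategy is to show that a generic $s$-tuple in $C$ actually generates the corresponding $A'$, after which any $a \in A \setminus A'$ satisfies $\langle A', a \rangle = A$ by maximality of $A'$, and hence $(a_1,\dots,a_s,a) \in U_{s+1}$. The principal technical difficulty is this ``full generation'' property of the generic $s$-tuple, which amounts to controlling $\gen_k(A')$ in terms of $g = \gen_k(A)$. I would justify it by a dimension count on the incidence variety $\{(A',(a_1,\dots,a_s)) : A' \text{ maximal proper},\ a_i \in A'\}$ projecting to $V_s$, aided by the observation that $\gen_k(A') \ge g - 1$ for any maximal proper subalgebra $A'$---since adjoining any $a \in A \setminus A'$ to a generating set of $A'$ yields a generating set of $A$---so that for $s \ge g$ a tuple in $(A')^s$ in general position exhausts $A'$.
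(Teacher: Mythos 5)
Your overall architecture coincides with the paper's: both reduce Theorem~\ref{thm.fr1} to the inequality $\dim Z_{s+1}\leqslant \dim Z_s+n-1$ for all $s\geqslant \gen_k(A)$ (Proposition~\ref{prop:cAr-increasing} in the paper) and then invoke Theorem~\ref{thm.main}; your further reduction to the assertion that each top-dimensional component $C$ of $Z_s$ contains a tuple extending to a generating $(s+1)$-tuple is also sound. The gap is in how you propose to produce such a tuple.

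Your claim that a generic $s$-tuple in $C$ generates the corresponding maximal proper subalgebra $A'$ requires an \emph{upper} bound $\gen_k(A')\leqslant s$, but the only inequality you establish, $\gen_k(A')\geqslant \gen_k(A)-1$, points the wrong way and gives no control; the step ``so that for $s\geqslant g$ a tuple in general position exhausts $A'$'' is a non sequitur. In fact no bound of the form $\gen_k(A')\leqslant \gen_k(A)$ holds. Take $A$ to be the non-unital algebra $x\,k[x]/(x^4)$, so $\gen_k(A)=1$, and $A'=\Span_k(x^2,x^3)$: this is a maximal proper subalgebra with zero multiplication, hence $\gen_k(A')=2$. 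Here $s=g=1$, $Z_1=A'$ is irreducible and top-dimensional, and \emph{no} $1$-tuple in $Z_1$ generates $A'$ --- yet every element of $Z_1$ does extend to a generating pair (adjoin $x$). So the statement you need is true in this example but cannot be reached through ``the truncated tuple generates $A'$.'' (A secondary issue: the maximal proper subalgebras do not obviously form an algebraic family --- maximality is neither open nor closed on the relevant Grassmannians --- so the incidence variety you invoke also needs more care than the proposal gives it.) The paper sidesteps subalgebras entirely: it introduces the open locus $U'_s$ of ``almost generating'' tuples and proves by an elementary coordinate-by-coordinate induction (Lemma~\ref{lem:almost-gens-fibre}(c)) that $Z_s\cap U'_s$ is dense in $Z_s$ whenever $s\geqslant \gen_k(A)-1$, after which the fibre-dimension theorem gives $\dim Z_{s+1}<\dim Z_s+\dim A$. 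To repair your argument, aim directly at the density (or mere nonemptiness on each top component) of almost-generating tuples, rather than at generation of $A'$ by the first $s$ entries.
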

 
Our proof will rely on the following.

\begin{prop}\label{prop:cAr-increasing}
$c_A(r+1) \geqslant c_A(r) + 1$ whenever $r\geqslant \gen_k(A)-1$.
\end{prop}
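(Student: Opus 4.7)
The inequality $c_A(r+1) \ge c_A(r) + 1$ is equivalent to $\dim Z_{r+1} \le \dim Z_r + n - 1$, so the plan is to set up the projection $\pi \colon V_{r+1} = V_r \times V_1 \to V_r$ and exploit the inclusion $Z_{r+1} \subseteq Z_r \times V_1$ (appending any element to a generating $r$-tuple preserves generation). This inclusion immediately gives the coarse bound $\dim Z_{r+1} \le \dim Z_r + n$, and the whole task is to improve it by $1$. The boundary case $r = \gen_k(A) - 1$ is immediate: here $U_r = \emptyset$ forces $Z_r = V_r$ of dimension $rn$, while $r+1 \ge \gen_k(A)$ ensures $U_{r+1} \ne \emptyset$ and hence $\dim Z_{r+1} \le (r+1)n - 1 = \dim Z_r + n - 1$.

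For $r \ge \gen_k(A)$ I would argue by contradiction. If $\dim Z_{r+1} = \dim Z_r + n$, then applying the fibre-dimension theorem to the surjection $\pi|_{Z_{r+1}} \colon Z_{r+1} \to Z_r$ (whose fibres lie inside $V_1 = \mathbb{A}^n$) forces some irreducible component of $Z_{r+1}$ to coincide set-theoretically with $W \times V_1$ for a top-dimensional irreducible component $W$ of $Z_r$. Equivalently, for every $y \in W$ and every $a \in V_1$, the tuple $(y, a)$ fails to generate $A$. Passing to the generic point $\eta$ of $W$ over $K = k(W)$, the subalgebra $A' = \langle \eta \rangle \subsetneq A_K$ has the property that no single element of $A_K$ extends it to $A_K$; in particular $A'$ cannot be a maximal proper subalgebra of $A_K$, since for any maximal proper $A'_*$ and any $a \notin A'_*$, maximality forces $\langle A'_*, a \rangle = A_K$.

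To derive a contradiction I would fix a generating $(r+1)$-tuple $(c_1, \dots, c_{r+1})$ of $A$ over $k$ (available since $r+1 \ge \gen_k(A)$) and examine the ascending chain $A' \subseteq \langle A', c_1 \rangle \subseteq \cdots \subseteq \langle A', c_1, \dots, c_{r+1} \rangle = A_K$, locating the first step $j$ at which $A_K$ is attained. The case $j = 1$ contradicts non-extendability of $A'$ outright. When $j \ge 2$ I would stratify $Z_r$ by the $G$-conjugacy class of the generated subalgebra: for a subalgebra $A''$ the corresponding stratum has dimension $\dim G - \dim N_G(A'') + r \dim A''$, and the elementary bound $\dim N_G(A'_*) - \dim N_G(A') \le \dim A' \cdot (\dim A'_* - \dim A')$ for any nested pair $A' \subsetneq A'_*$ (coming from the action of $N_G(A'_*)$ on the subspaces of $A'_*$ and the orbit of $A'$ therein) would force the stratum for a suitable maximal proper $A'_* \supseteq A'$ to strictly dominate the stratum for $A'$, contradicting top-dimensionality of $W$.

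The step I expect to be the main obstacle is making this stratum-dimension comparison work uniformly throughout the range $r \ge \gen_k(A)$: when $r$ is barely at this threshold while $\dim A'$ is comparatively large, the inequality $r > \dim A'$ is not automatic, and one must combine the dimension estimate with the chain-of-generators argument above --- possibly via a deformation using the tuple $(c_1, \dots, c_{r+1})$ --- to relate the hypothetical non-extendable $A'$ to an extendable neighbour whose generating tuples lie in $W$.
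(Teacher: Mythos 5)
Your reduction is sound as far as it goes: the coarse bound $\dim Z_{r+1}\leqslant \dim Z_r+n$ via $Z_{r+1}\subseteq Z_r\times V_1$, the boundary case $r=\gen_k(A)-1$, and the observation that equality would force a top-dimensional component $W$ of $Z_r$ to satisfy $W\times V_1\subseteq Z_{r+1}$ (so that the generic tuple of $W$ generates a proper subalgebra $A'\subseteq A_{k(W)}$ that no single element extends to all of $A_{k(W)}$) are all correct. The gap is in the mechanism you propose for turning this into a contradiction. First, the stratification of $Z_r$ ``by the $G$-conjugacy class of the generated subalgebra'' is not a finite stratification in general: subalgebras of a fixed dimension can move in positive-dimensional families not accounted for by $G$-translation, so the dimension formula $\dim G-\dim N_G(A'')+r\dim A''$ undercounts (and the fibre term $r\dim A''$ also presumes $A''$ is $r$-generated). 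Second, and more decisively, even granting the formula and your normalizer estimate, the comparison between the strata of $A'$ and of a maximal $A'_*\supseteq A'$ yields a strict gain only when $r>\dim A'$. This hypothesis can fail throughout the range you need: for $A=\Mat_s(k)$ one has $\gen_k(A)=2$ while maximal subalgebras have dimension $s^2-s+1$, so for small $r\geqslant 2$ the inequality goes the wrong way. You flag this yourself as ``the main obstacle'' and offer only a sketch of a deformation argument; as written the proof does not close.

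For comparison, the paper avoids any classification of subalgebras. It introduces the locus $U'_r$ of \emph{almost generating} $r$-tuples (those extendable to a generating $(r+1)$-tuple), notes that $U'_r=\psi(U_{r+1})$ is open because the projection $\psi\colon V_{r+1}\to V_r$ is flat hence open, and then proves that $Z_r\cap U'_r$ is dense in $Z_r$ whenever $r\geqslant\gen_k(A)-1$ by a direct induction: if some $\overline a\in Z_r$ lay outside $\overline{Z_r\cap U'_r}$, one replaces the coordinates $a_1,\dots,a_r$ one at a time by free variables, showing at each step (using openness of $U'_r$) that the resulting affine subspace $L_i$ stays inside $Z_r-U'_r$, until $L_r=V_r$ forces $U'_r=\emptyset$, i.e.\ $r<\gen_k(A)-1$. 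Density of $Z_r\cap U'_r$ then bounds the generic fibre of $Z_{r+1}\to Z_r$ by $\dim A-1$ and the Fibre Dimension Theorem finishes. If you want to salvage your outline, replacing the stratum-dimension comparison with an argument of this coordinate-replacement type is the natural fix.
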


As a consequence of Proposition~\ref{prop:cAr-increasing}, we see that $c_A(r) \geqslant r -  \gen_k(A) + 1$ for any $r \geqslant \gen_k(A)$. Setting $r = d + \gen_k(A)$, we can rewrite this as $c_A(r) > d$. 
Theorem~\ref{thm.main} now tells us that $\gen_R(B) \leqslant r = d + \gen_k(A)$, and Theorem~\ref{thm.fr1} follows.

In the course of proving Proposition~\ref{prop:cAr-increasing} we may assume without loss of generality that
$k$ is algebraically closed. It is then harmless to replace $V_r$ and $Z_r$ by their sets of $k$-points endowed with the Zariski topology, 
and this will be our convention in this section.

We say that an $r$-tuple $(a_1,\dots,a_r)\in A^r$ \emph{almost generates} $A$ if there exists
$a_{r+1}\in A$ such that $a_1,\dots,a_r,a_{r+1}$  generates $A$.
Denote the set of almost generating $r$-tuples in $A^r$ by $U'_r$.

\begin{lem}\label{lem:almost-gens-fibre}
	Let $\psi:V_{r+1}\to V_r$ denote projection to the first $r$ coordinates.
	
	\smallskip
	(a) Let $\overline{a}=(a_1,\dots,a_r)\in Z_r$. Then
$\dim \big( \psi^{-1}(\overline{a}) \cap Z_{r + 1} \big) <\dim A$ 
	if and only if $\overline{a}\in U'_r$.
	
	\smallskip
	(b) 
	$U_r'$ is Zariski open in $V_r$.
	
	\smallskip
	(c) If $r\geqslant \gen_k(A) - 1$, then  $Z_r\cap U'_r$ is dense in $Z_r$.
\end{lem}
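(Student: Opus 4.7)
For part (a), the fibre $\psi^{-1}(\overline{a})$ is canonically identified with $V_1 \cong \A(A)$, and under this identification $\psi^{-1}(\overline{a}) \cap Z_{r+1}$ is the closed subvariety of those $c \in V_1$ with $\langle \overline{a}, c \rangle \ne A$. If $\overline{a} \in U'_r$, then by definition some $c^*$ completes $\overline{a}$ to a generating $(r{+}1)$-tuple, so this subvariety is a proper closed subset of the irreducible $V_1$ and has dimension strictly less than $n = \dim A$. Otherwise no such $c$ exists, the subvariety equals all of $V_1$, and its dimension is exactly $n$.

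For part (b), observe that $U'_r = \psi(U_{r+1})$. Since $\psi \colon V_{r+1} \to V_r$ is the projection of a trivial affine bundle, it is flat and hence open, so the image of the open subscheme $U_{r+1}$ is open in $V_r$.

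For part (c), part (b) gives that $Z_r \cap U'_r$ is open in $Z_r$, so density is equivalent to meeting every irreducible component. The base case $r = \gen_k(A) - 1$ is immediate: $Z_r = V_r$ is irreducible, and $U'_r \supseteq \psi(U_{r+1})$ is nonempty (since $U_{r+1} \ne \emptyset$) and open in the irreducible $V_r$, hence dense. For $r \ge \gen_k(A)$, so that $Z_r \subsetneq V_r$, the plan is to argue by contradiction: suppose some irreducible component $X$ of $Z_r$ is contained in $P_r := V_r \setminus U'_r$, so by (a), $X \times V_1 \subseteq Z_{r+1}$.

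The heart of the argument is to exploit the $S_{r+1}$-invariance of $Z_{r+1}$. For each $i \in \{1, \dots, r\}$, let $\sigma_i$ be the transposition swapping coordinates $i$ and $r{+}1$; then $\sigma_i(X \times V_1) \subseteq Z_{r+1}$, and applying $\psi$ and using $\psi(Z_{r+1}) \subseteq Z_r$ gives $\psi(\sigma_i(X \times V_1)) \subseteq Z_r$. A direct coordinate chase identifies $\psi(\sigma_i(X \times V_1)) = \pi_i^{-1}(\pi_i(X))$, where $\pi_i \colon V_r \to V_{r-1}$ omits the $i$-th coordinate; taking Zariski closure (and using that trivial fibrations commute with closure) yields the irreducible closed subvariety $\pi_i^{-1}(\overline{\pi_i(X)}) = \overline{\pi_i(X)} \times V_1$, which is contained in $Z_r$ and contains $X$. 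Maximality of $X$ among irreducible closed subvarieties of $Z_r$ therefore forces $X = \pi_i^{-1}(\overline{\pi_i(X)})$, i.e., $X$ is a cylinder in the $i$-th coordinate direction. Since this holds for every $i = 1, \dots, r$, iterating the cylinder property in each coordinate slot forces $X = V_r$, contradicting $X \subseteq Z_r \subsetneq V_r$. The main obstacle will be the combinatorial bookkeeping to verify the identification $\psi(\sigma_i(X \times V_1)) = \pi_i^{-1}(\pi_i(X))$, after which the cylinder-collapse argument closes out immediately.
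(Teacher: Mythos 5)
Your parts (a) and (b) coincide with the paper's treatment: (a) is dispatched there as immediate from the definition of $U'_r$ (your fibrewise reading is exactly the intended one), and (b) is proved exactly as you do, via $U'_r=\psi(U_{r+1})$ and openness of the flat projection $\psi$. Part (c) is where you genuinely diverge, and your argument is correct. The paper fixes a hypothetical point $\overline{a}\in Z_r-\overline{Z_r\cap U'_r}$ and runs an induction on the nested affine subspaces $L_i=\{(x_1,\dots,x_i,a_{i+1},\dots,a_r)\}$, showing each is contained in $Z_r-U'_r$ (the induction step uses part (b) to get density of $L_{i+1}\cap U'_r$ in $L_{i+1}$ and derive a contradiction), and concludes at $i=r$ that $U'_r=\emptyset$, i.e., $r<\gen_k(A)-1$. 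You instead take an irreducible component $X$ of $Z_r$ disjoint from $U'_r$, use the $S_{r+1}$-invariance of $Z_{r+1}$ together with $\psi(Z_{r+1})\subseteq Z_r$ to place $\pi_i^{-1}(\overline{\pi_i(X)})$ inside $Z_r$ as an irreducible closed set containing $X$, invoke maximality to make $X$ a cylinder in every coordinate, and collapse to $X=V_r$, contradicting $Z_r\subsetneq V_r$. The two proofs are parallel in spirit (the paper's $L_i$ are partial cylinders over a single point, yours are full cylinders over a component) but mechanistically different: the paper needs no component decomposition and no symmetric-group action, while your version yields the slightly sharper statement that any component of $Z_r$ missing $U'_r$ would have to be all of $V_r$. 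The bookkeeping you flag does check out: $\psi(\sigma_i(X\times V_1))=\pi_i^{-1}(\pi_i(X))$ is a direct coordinate computation, and the closure exchange $\overline{\pi_i^{-1}(S)}=\pi_i^{-1}(\overline{S})$ holds for the product projection, so $\pi_i^{-1}(\overline{\pi_i(X)})$ really does sit inside the closed set $Z_r$.
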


\begin{proof} (a) is obvious from the definition of $U_r'$. 
(b) holds because $\psi$ is flat and finitely presentated,
hence open, and $U'_r=\psi(U_{r+1})$.

(c)	It is enough to show that if there exists $\overline{a}=(a_1,\dots,a_r)\in Z_r-\overline{ Z_r\cap U'_r }$,
	then $r<\gen_k(A)-1$, or equivalently, that  $U'_r=\emptyset $.	To prove this assertion, we will show by induction on $i=0,1,\dots,r$ that the affine
	space
\begin{equation} \label{e.Li} 
\text{$
	L_i:=\{(x_1,\dots,x_{i },a_{i+1},\dots,a_r)\,|\, x_1,\dots,x_{i+1}\in A\}  
	$	is contained in  $Z_r - U'_r$.}
\end{equation}
For $i = r$,~\eqref{e.Li} tells us that $\emptyset = L_r \cap U_r' = V_r \cap U_r' = U_r'$.
	In other words, $r < \gen_k(A) - 1$, and part (c) follows.
	
	The base case, where $i=0$, is clear. For the induction step, we assume
	assume that $L_i\subseteq Z_r-U'_r$ for some $i\in\{0,\dots,r-1\}$.
	This means that for every $x_1, \ldots, x_r \in A$, $(x_1,\dots,x_i,a_{i+1},\dots,a_r) \notin  U'_r$.
	Consequently, for every $x_{i+1} \in A$, the $r+1$ elements $x_1,\dots,x_{i+1},a_{i+1},\dots,a_r$
	do not generate $A$ as an $k$-algebra, and, in particular, $(x_1,\dots,x_{i+1}, a_{i+2},\dots,a_r)\in Z_r$.
	In other words, $L_{i+1}\subseteq Z_r$. Our goal is to show that $L_{i+1}\cap U'_r = \emptyset$. Assume the contrary.
	Then  by part (b),  $L_{i+1}\cap U'_r$ is Zariski dense	in $L_{i+1}$. In particular,
	$\overline{a}$ lies in the Zariski closure of  $Z_r \cap U_r'$, contradiction 
	our assumption that $\overline{a}\notin Z_r - \overline{Z_r\cap U'_r}$. This contradiction tells us that
	$L_{i+1}\cap U'_r=\emptyset$, completing the proof of~\eqref{e.Li} and thus of part (c).
\end{proof}

\begin{proof}[Proof of Proposition~\ref{prop:cAr-increasing}] 
	By Lemma~\ref{lem:almost-gens-fibre} the fibre of 
	$\psi:Z_{r+1}\to Z_r$ over every point of a dense open subset $Z_r \cap U_r'$ of $Z_r$ is of dimension $< \dim(A)$.
	Hence, $\dim Z_{r+1}<\dim Z_r +\dim A$ by the Fibre Dimension Theorem. Equivalently, $c_A(r+1) > c_A(r)$.
\end{proof}

\begin{remark} The above argument is conceptually simpler than the proof given in~\cite[Section 3]{fr}. It does not entirely supplant this proof,
though, because the main result of~\cite{fr} is more general than Theorem~\ref{thm.fr1}.
\end{remark}

\begin{example} \label{ex.vector-bundle} 
Let $A = k^n$, viewed as a $k$-module (i.e., as a $k$-algebra with zero multiplication). Here $R$-forms of $A$ are projective $R$-modules
of rank $n$ and $G = \Aut_k(A) = \GL_n$. 
Let us write elements of $V_1$ as column vectors of length $n$ and identify $V_r$ with the space $\on{M}_{n \times r}$ of $n \times r$ matrices.
We  assume that $r \geqslant n$; indeed, we need at least $n$ elements
to generate $B$. The group $G = \GL_n$ acts on $V_r = \on{M}_{n \times r}$ via left multiplication. Clearly
$U_r = \on{M}_{n \times r}^0$ is the open subvariety of $n \times r$ matrices of rank $n$, and $Z_r$ is the closed subvariety
of $\on{M}_{n \times r}$ of matrices of rank $< n$, i.e., of matrices with linearly dependent rows.
An easy calculation shows that $\dim Z_r = (n-1) + (n-1)r = (n-1)(r+1)$, and thus $c_A(r) = r \dim A - \dim Z_r = r - n + 1$.
By Theorem~\ref{thm.main}, if $R$ is of finite type over $k$ and $r - n + 1 > \Kdim R$, then $B$ is generated by $r$ elements as an $R$-module.
In other words, $\gen(B) \leqslant \Kdim R + n$, which is Forster's original bound.
\end{example}

 \begin{example} \label{ex.etale}
 Let $A = k \times \ldots \times k$ ($n$ times) with componentwise multiplication. Here $R$-forms of $A$ are \'etale algebras of rank $n$ over $R$ and
 $G = \Aut_k(A)$ is the symmetric group $\on{S}_n$ permuting the $n$ factors of $k$. Once again, we will write elements of $A$ as column vectors of length $n$, 
 and elements of $V_r$ as  $n \times r$-matrices. Then $\on{S}_n$ acts on $V_r = \on{M}_{n \times r}$ by permuting the rows, 
 $U_r$ consists of matrices with distinct rows, and $Z_r$ consists of matrices whose rows are not distinct. 
  From this we readily see that $\dim Z_r = nr - r$ or equivalently, $c_{A}(r) = rn  - \dim Z_r = r$. 
  If $R$ is a $k$-ring of finite type, Theorem~\ref{thm.main} tells us that if $r > \Kdim R$, 
  then every \'etale $R$-algebra $B$ is generated by $r$ elements. 
  Equivalently, $\gen_R(B) \leqslant \Kdim R + 1$, which is the upper bound of Theorem~\ref{thm.fr}.
 \end{example}

\section{The Lefschetz principle}
\label{sect.lefschetz-principle}

Our proofs of Theorems~\ref{thm.main3} and~\ref{thm.azumaya} will rely on topological methods. These methods work best
if the base field $k$ is equipped with an embedding $k \into \mathbb C$. To extend our arguments to an arbitrary base
field $k$ of characteristic $0$, we will repeatedly use the following version of the Lefschetz principle.

\begin{lemma} \label{lem:requireForLefschetz} Let $R$ be a $k$-ring, and $B$ be an $R$-algebra. Suppose $B$ is finitely generated as an $R$-module.
For any field extension $K/k$, set $R_K = R \otimes_k K$ and $B_K = B \otimes_R R_K$. Then
\begin{enumerate}[label=(\alph*)]
\item \label{le1} $\gen_{R}(B) \ge \gen_{R_K}(B_K)$.
\item \label{le2} Moreover, there exists a subextension $k \subset F \subset K$ such that $F/k$ is finitely generated and
  $\gen_{R_F}(B_{F}) = \gen_{R_K}(B_K)$.
\item \label{le3} Suppose $k$ is a finitely generated extension of $\QQ$. If $\gen_{R_K}(B_K) < r$ for some field $K$ containing $k$, then
\[ \gen_{R_{\mathbb C}}(B_{\mathbb C}) < r.\]
\end{enumerate}
\end{lemma}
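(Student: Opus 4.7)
My plan is to handle the three parts in order, with essentially all the content in (b).

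\emph{Part (a)} is immediate from base change: if $b_1, \dots, b_r$ generate $B$ as an $R$-(multi)algebra, then $b_1 \otimes 1, \dots, b_r \otimes 1$ generate $B_K = B \otimes_R R_K$ over $R_K$, since any monomial $w(b_1, \dots, b_r)$ with $R$-coefficients becomes a monomial in the $b_i \otimes 1$ with coefficients in $R_K$, and arbitrary $K$-scalars are absorbed into these coefficients.

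\emph{Part (b)} is the heart of the lemma, and the only place where finite generation of $B$ as an $R$-module is actually used. Take an $R_K$-algebra generating set $\beta_1,\dots,\beta_r$ of $B_K$ with $r = \gen_{R_K}(B_K)$, and fix module generators $c_1,\dots,c_s \in B$ of $B$ over $R$; their images generate $B_K$ as an $R_K$-module. Writing $\beta_i = \sum_j r_{ij}(c_j \otimes 1)$ with $r_{ij} \in R_K = R \otimes_k K$ involves only finitely many elements of $K$. Conversely, since the $\beta_i$ generate $B_K$, each $c_j \otimes 1$ can be written as an $R_K$-linear combination of elements of $W_r$ evaluated at $(\beta_1,\dots,\beta_r)$, again using only finitely many elements of $K$. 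Let $F \subseteq K$ be the subfield generated over $k$ by all these finitely many scalars; then $F/k$ is finitely generated, the $\beta_i$ lift to $B_F$, and the identities expressing each $c_j \otimes 1$ in terms of the $\beta_i$ already hold in $B_F$ because $B_F \hookrightarrow B_K$ is injective (the field extension $F \subseteq K$ is flat). Hence $\beta_1,\dots,\beta_r$ generate $B_F$ as an $R_F$-algebra, yielding $\gen_{R_F}(B_F) \le r$. The reverse inequality follows by applying (a) to the extension $F \subseteq K$.

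\emph{Part (c)} combines (b) with a standard transcendence extension. Given $\gen_{R_K}(B_K) < r$, apply (b) to obtain a finitely generated subextension $F/k$ of $K/k$ with $\gen_{R_F}(B_F) < r$; since $k/\QQ$ is finitely generated by assumption, so is $F/\QQ$. I extend the given embedding $k \hookrightarrow \CC$ to $F \hookrightarrow \CC$: choose a finite transcendence basis of $F/k$, map it to elements of $\CC$ algebraically independent over the image of $k$ (possible because $\CC/\QQ$ has uncountable transcendence degree), then extend over the residual finite algebraic extension using that $\CC$ is algebraically closed. Applying (a) to this embedding with ring $R_F$ and algebra $B_F$ gives $\gen_{R_\CC}(B_\CC) \le \gen_{R_F}(B_F) < r$, using $(R_F)\otimes_F \CC = R_\CC$.

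The only subtlety requiring care is in part (b): one must verify that the finitely generated field $F$ is large enough to witness \emph{both} directions of the generating relationship (the $\beta_i$ as $R_K$-combinations of the $c_j\otimes 1$, and the $c_j \otimes 1$ as polynomials in the $\beta_i$), and that equalities verified in $B_K$ descend to $B_F$. The latter is just flatness of a field extension, so this is bookkeeping rather than a genuine obstacle; all other steps are routine.
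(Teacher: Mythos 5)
Your proposal is correct and follows essentially the same route as the paper: part (a) by trivial base change, part (b) by expressing the algebra generators in terms of finitely many module generators and vice versa and adjoining the finitely many scalars involved, and part (c) by extending the embedding $k \hookrightarrow \CC$ to the finitely generated field $F$ from part (b) and applying part (a). Your explicit remarks on the injectivity of $B_F \hookrightarrow B_K$ and on the construction of the embedding $F \hookrightarrow \CC$ merely spell out steps the paper leaves implicit.
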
 

\begin{proof} \ref{le1} is obvious: if $a_1, \ldots, a_{r}$ generate $A$ as an $R$-algebra, then $a_1, \ldots, a_{r}$ generate $B_K$ as an $R_K$-algebra.

  \smallskip \ref{le2} Suppose $\gen_{R_K}(B_K) = r$. Choose $r$ elements, $b_1, \ldots, b_{r} \in B_K$ that generate $B_K$ as
  an $R_K$-algebra. We claim that these same elements will generate $B_F$ as an $R_F$-algebra for some extension
  $k \subset F \subset K$ such that $F$ is finitely generated over $k$. In particular, the claim tells us that
  $\gen_{R_F}(B_{F}) \leqslant r$. The opposite inequality is given by part \ref{le1}, so if we can prove this claim, then
  part \ref{le2} will follow.

To prove the claim, choose elements $a_1, \ldots, a_d$ that generate $B$ as an $R$-module and
write each element $b_i$ as 
\[ b_{i} = r_{i1} a_1 + \ldots + r_{id} a_d. \] for some coefficients $r_{ij} \in R_K$. Each $r_{ij}$ lies in
$ R_{F_{ij}}$ for some intermediate subfield $k \subset F_{ij} \subset K$ such that $F_{ij}$ is finitely generated over
$k$.  After replacing $k$ by the compositum of $F_{ij}$ in $K$ (which is still finitely generated over $k$), we may
assume without loss of generality that $b_1, \ldots, b_{r}$ lie in $B$.

Since $b_1, \ldots, b_{r}$ generate $B_K$ as an $R_K$-algebra, we can write each $a_i$ as
\[ a_i = s_{i1} M_{i1} + \ldots + s_{it_i} M_{it_i} \] for some monomials $M_{ij}$ in $b_1, \ldots, b_{r}$ and some
coefficients $s_{ij} \in R_K$. Once again, each $s_{ij}$ lies in some intermediate subfield $E_{ij}$ finitely generated
over $k$. Setting $F$ to be the compositum of $E_{ij}$ in $K$, we see that the $R_F$-subalgebra of $B_F$ generated by
$b_1, \ldots, b_{r}$ contains $a_1, \ldots, a_d$. This shows that $b_1, \ldots, b_{r}$ generate $B_F$ as an
$R_F$-algebra. Hence, $\gen_{R_F}(B_F) \le r $, as desired.

\ref{le3} Choose $F$ as in part \ref{le2}, so that $\gen_{R_F}(B_F) < r$. Since $F$ is a finitely generated extension of
$\mathbb{Q}$, it is isomorphic (over $k$) to a subfield of $\mathbb{C}$. Thus we may assume without loss of generality
that $F \subset \mathbb C$. By part \ref{le1},
 \[ \gen_{R_{\mathbb C}}(B_{\mathbb C}) \leqslant \gen_{R_F}(B_F) = \gen_{R_K}(B_K) < r, \]
as desired.
\end{proof}

\section{Equivariant cohomology of  \texorpdfstring{$U_r$}{Ur}}
\label{s:equivCohoUr}

Suppose the field $k$ is embedded in $\C$, so that a $k$-variety $X$ gives rise to a complex analytic space $X(\C)$. The
purpose of this section is to calculate some of the $\PGL_s(\C)$-equivariant cohomology of the variety $U_r(\C)$. We
begin with two short subsections that cover preliminary material.

We write $\Hoh^*(X)$ for the singular cohomology $\Hoh^*(X(\CC); \Z)$. 

\subsection*{The Affine Lefschetz Theorem}

We use singular cohomology, rather than some other cohomology theory of varieties, because of the following theorem.

\begin{theorem}[Affine Lefschetz Hyperplane Theorem] \label{th:affineLefschetz}
Let $k \subset \CC$ be a field with a chosen complex embedding, let $\mathbb{A}^N$ be an affine space and
$X \subset \mathbb A^N$ be a smooth closed subvariety of dimension $d$. Then there exists a smooth affine hyperplane section $Y
\subseteq X$ defined over $k$ such that $\dim Y=d-1$ and $\Hoh^i(X) \to \Hoh^i(Y)$ is an isomorphism if $i < d-1$ and an injection if $i
= d-1$.
\end{theorem}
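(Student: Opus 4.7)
The plan is to separate the geometric task of producing a good hyperplane section from the purely topological task of establishing the cohomological statement. For the former, I would invoke Bertini's theorem over the infinite field $k$ (note that $k \subseteq \C$ forces $\operatorname{char} k = 0$, so $k$ is infinite): in the dual projective space $\check{\P}^N_k$ parametrising affine hyperplanes in $\mathbb{A}^N$, the locus of hyperplanes $H$ for which $X \cap H$ is smooth of dimension $d-1$ is Zariski-open and non-empty (cf.~\cite[II.8.18]{HartshorneAlgebraicGeometry1977} together with the standard argument that non-transverse hyperplanes form a proper closed subset). Since $k$ is infinite, this open subset has a $k$-point, giving a hyperplane $H$ over $k$ and a smooth $k$-subvariety $Y = X \cap H$ of dimension $d-1$.

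For the cohomological conclusion, the key tool is the Andreotti--Frankel theorem: a smooth affine complex variety of complex dimension $d$ has the homotopy type of a CW complex of real dimension at most $d$. In particular, for such a variety $W$, $\Hoh^i(W) = 0$ for $i > d$. Applying this to the open complement $U = X \setminus Y$, which is smooth and affine of complex dimension $d$, I get $\Hoh^i(U) = 0$ for $i > d$. Combining with Poincar\'e--Lefschetz duality on the smooth variety $X$ (or equivalently on $U$), one obtains
\[
\Hoh^i_c(U) \cong \Hoh^{2d-i}(U) = 0 \quad \text{for } i < d.
\]

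Finally, I would feed this vanishing into the long exact sequence of the pair $(X, Y)$, or equivalently the cohomology sequence of the open-closed decomposition $U \hookrightarrow X \hookleftarrow Y$:
\[
\cdots \to \Hoh^i_c(U) \to \Hoh^i(X) \to \Hoh^i(Y) \to \Hoh^{i+1}_c(U) \to \cdots.
\]
For $i < d-1$ both flanking terms vanish, so $\Hoh^i(X) \to \Hoh^i(Y)$ is an isomorphism; for $i = d-1$ only the left term vanishes, yielding injectivity. The main obstacle is ensuring compatibility of the Bertini step (which produces a $k$-rational hyperplane) with the topological step (which needs the resulting $Y$ to be a genuine transverse hyperplane section of $X$ over $\C$); this is handled by noting that the ``smooth of dimension $d-1$'' condition is preserved under base change $k \hookrightarrow \C$, so the $k$-rational $Y$ provided by Bertini is automatically a smooth hyperplane section of $X_\C$ to which the Andreotti--Frankel/Poincar\'e duality argument applies verbatim.
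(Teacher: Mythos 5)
There is a genuine gap here, and in fact the strategy cannot be repaired: the exact sequence
\[
\cdots \to \Hoh^i_c(U) \to \Hoh^i(X) \to \Hoh^i(Y) \to \Hoh^{i+1}_c(U) \to \cdots
\]
on which your final step rests does not exist when $X$ is non-compact. The identification $\Hoh^i(X,Y)\cong\Hoh^i_c(X\setminus Y)$, which is what makes the long exact sequence of the pair take this form, holds for \emph{compact} $X$ --- this is precisely how the classical \emph{projective} Lefschetz hyperplane theorem is deduced from Andreotti--Frankel --- but it fails for affine $X$. A concrete counterexample to the whole scheme: take $X=\{xy=1,\ zw=1\}\subset\A^4$, so $X(\C)\cong\C^{\times}\times\C^{\times}$ and $d=2$, and take the hyperplane $H=\{x=2\}$. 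Then $Y=X\cap H\cong\Gm$ is a smooth hyperplane section of dimension $d-1=1$, $U=X\setminus Y\simeq (S^1\vee S^1)\times S^1$ is smooth affine of dimension $2$, and one checks directly that $\Hoh^i_c(U)=0$ for $i<2$, so every intermediate claim in your argument holds; yet the restriction map $\Hoh^1(X)=\ZZ^2\to\Hoh^1(Y)=\ZZ$ is not injective, so the conclusion of the theorem fails for this smooth section. Plugging this example into your sequence at $i=1$ gives $0\to\ZZ^2\to\ZZ$ exact, a contradiction, which shows the sequence is simply not there. (A smaller slip: Poincar\'e duality reads $\Hoh^i_c(U)\cong\Hoh_{2d-i}(U)$, ordinary homology, not $\Hoh^{2d-i}(U)$; the vanishing you want is still correct, but this is immaterial next to the main issue.)

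The underlying point is that for affine $X$ the conclusion is \emph{false} for an arbitrary smooth hyperplane section: the hyperplane must be generic with respect to the behaviour of $X$ at infinity (transverse to a Whitney stratification of the closure of $X$ in $\P^N$, including the boundary strata), and the cohomological statement is not a formal consequence of Andreotti--Frankel applied to $X\setminus Y$. This is why the paper delegates the topological content to Hamm's theorem \cite[Theorem 1.1.3]{HammLefschetztheoremsquasiprojective1985}, which asserts that for a hyperplane in a suitable dense open subset of the dual space the relative homotopy groups $\pi_i(X,Y)$ vanish for $i\le d-1$; the only remaining work is your (correct) observation that a dense Zariski-open subset of the parameter space of hyperplanes contains a point defined over $\QQ\subseteq k$. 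So your Bertini step and your descent-to-$k$ step are fine as far as they go, but they must be applied to the smaller open set certified by Hamm's theorem, not merely to the locus where $X\cap H$ is smooth.
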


Here, a hyperplane section is the scheme-theoretic intersection of $X$ with a hyperplane in the ambient
$\A^N$. 

When $k = \CC$, the theorem is an immediate consequence of Bertini's theorem and \cite[Theorem
1.1.3]{HammLefschetztheoremsquasiprojective1985}. In fact, if $k$ is a field with a chosen complex embedding
$k \subseteq \CC$, then we can still apply these results
. There is an open dense set of complex hyperplanes in some projective space for which the intersection $Y$ is smooth
and such that $Y \to X$ has the required behaviour in cohomology. Any open dense subset of an affine space contains a
point defined over $\QQ$, and therefore over $k$.


\subsection*{Equivariant cohomology of varieties}

If $\Gamma$ is a Lie group, then there exists a universal principal $\Gamma$-bundle $E\Gamma \to B\Gamma$ where $E\Gamma$ is contractible. If $\Gamma$
acts on $X$, then the \textit{Borel equivariant cohomology} (\cite[III.1]{Hsiang1975}) of $X$ is defined by
\[ \Hoh^*_\Gamma( X  ) := \Hoh^*( (X \times E\Gamma)/\Gamma ; \ZZ ). \]
As a special case, if $X$ is a point, then $\Hoh^\ast_\Gamma(X) = \Hoh^\ast(B \Gamma ; \ZZ)$.

If $G$ is a linear algebraic group defined over $k \subseteq \CC$, then we may define a Lie group $G(\CC)$. Suppose $G$
acts on a $k$-variety $X$. We abuse notation and write $\Hoh^*_G(X)$ for $\Hoh^*_{G(\CC)}(X(\CC))$. The ideas of
B.~Totaro~\cite{Totaro} and D.~Edidin and W.~Graham~\cite{EdidinEquivariantintersectiontheory1998} allow 
us to treat $\Hoh^*_G(X)$ as though it were the cohomology of a variety, as we now recall.

For any $n$, we can
find a $G$-representation $V$ and an open $G$-invariant subvariety $U\subset V$ with the properties that $U \to U/G$ is
a $G$-torsor, $U/G$ is a variety, and $(V-U) \hookrightarrow V$ is a closed subvariety of codimension exceeding
$n/2$; see Remark~\ref{rem.approximation}. Then
  \begin{equation}
    \label{eq:11}
    \Hoh^i_{G}(X) \iso \Hoh^i((X \times U)/G) 
  \end{equation} 
  for $i\le n$.

  If $G$ acts on a variety $X$ in such a way that $X \to X/G$ is a $G$-torsor, then the quotient $(X \times U)/G$
  appearing in \eqref{eq:11} above is isomorphic to $X/G \times U$. We deduce that in this case
  \[ \Hoh^n_G(X) \iso \Hoh^n(X/G) \]
  for all nonnegative integers $n$.

    The equivariant Borel cohomology groups for $G$ are contravariantly functorial for
  $G$-equivariant maps. Since there is a unique $G$-equivariant map $X \to \Spec \CC$, there is a natural map
  $\Hoh^*_G(\Spec \C) = \Hoh^*(BG) \to \Hoh^*_G(X)$.

\subsection*{The case of \texorpdfstring{$U_r$}{U(r)}}

The purpose of this section is to prove Theorem \ref{thm:produceCounterexample}, which is a general device for
constructing examples of forms of an algebra $A$ requiring many generators.

We continue to write $V_r$ for the affine variety whose $k$-points are $A^r$, $Z_r$ for the closed subscheme of $V_r$
representing $r$-tuples that fail to generate $A$ as an $k$-algebra, and $c_A(r)$ for the codimension of $Z_r$ in
$V_r$. As before, write $U_r= V_r \sm Z_r$ and
$c_A(r)=\dim V_r-\dim Z_r$. The automorphism group of the algebra $A$ is denoted $G$.

\begin{lemma} \label{lem:isoRange}
  The natural map $\Hoh^n(BG) \to \Hoh^n_G(U_r)$ is an isomorphism for all values of $n < 2c_A(r)-1$.
\end{lemma}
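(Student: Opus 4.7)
The plan is to realize both sides as cohomology groups of finite-dimensional smooth varieties via the Totaro--Edidin--Graham approximation and then to run a Gysin-type long exact sequence for the closed subvariety $Z_r \subseteq V_r$. Fix $n < 2c_A(r) - 1$. Using the approximation recalled just before the lemma, choose a $G$-representation $V'$ and a $G$-invariant open $U' \subseteq V'$ on which $G$ acts freely, such that $U'/G$ is a smooth variety and the complement $V' \smallsetminus U'$ has complex codimension strictly greater than $n$. Then
$$\Hoh^n(BG) \isom \Hoh^n(U'/G) \qquad \text{and} \qquad \Hoh^n_G(U_r) \isom \Hoh^n\bigl((U_r \times U')/G\bigr),$$
and the natural map in question is identified with the restriction along $(U_r \times U')/G \hookrightarrow (V_r \times U')/G \to U'/G$.

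First, I would observe that $V_r \times U' \to U'$ is the trivial vector bundle with fiber the $G$-representation $V_r$, so passing to the quotient by the free diagonal $G$-action, the projection $(V_r \times U')/G \to U'/G$ is the vector bundle associated to the principal $G$-bundle $U' \to U'/G$ with fiber $V_r$. In particular it is an $\R$-oriented affine bundle, so pullback gives an isomorphism $\Hoh^\ast(U'/G) \isom \Hoh^\ast\bigl((V_r \times U')/G\bigr)$.

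Next I would set $M := (V_r \times U')/G$ and $Y := (Z_r \times U')/G$, and invoke the long exact sequence of the pair $(M, M \smallsetminus Y)$:
$$\cdots \to \Hoh^i_Y(M) \to \Hoh^i(M) \to \Hoh^i\bigl((U_r \times U')/G\bigr) \to \Hoh^{i+1}_Y(M) \to \cdots.$$
Since $Z_r$ has complex codimension $c_A(r)$ in $V_r$, and codimension is preserved by the product with $U'$ and by the quotient by the free $G$-action, the closed subset $Y$ has real codimension $2 c_A(r)$ inside the complex manifold $M$. The essential input is the standard vanishing of local cohomology: for any closed subset $Y$ of a topological manifold $M$, $\Hoh^i_Y(M; \Z) = 0$ for $i$ strictly less than the real codimension of $Y$. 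This follows from the Poincar\'e--Lefschetz duality identification $\Hoh^i_Y(M) \isom \Hoh^{\mathrm{BM}}_{\dim_\R M - i}(Y; \Z)$ combined with the vanishing of Borel--Moore homology above the real dimension of $Y$; crucially, no smoothness of $Y$ is required. Hence $\Hoh^i_Y(M) = 0$ for $i < 2c_A(r)$, and the restriction map $\Hoh^i(M) \to \Hoh^i((U_r \times U')/G)$ is an isomorphism for $i < 2c_A(r) - 1$.

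Stringing together the two isomorphisms $\Hoh^n(U'/G) \isom \Hoh^n(M) \isom \Hoh^n((U_r \times U')/G)$ in the range $n < 2c_A(r) - 1$ and checking that the composite agrees with the natural pullback (which is immediate from the description above) completes the proof. The main subtlety is that $Z_r$ may fail to be smooth or even reduced, which rules out a direct Thom isomorphism; the Borel--Moore formulation of the local cohomology vanishing is what allows the argument to proceed regardless.
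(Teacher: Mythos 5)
Your argument is correct and is essentially the paper's proof unpacked: the paper deduces the lemma from homotopy invariance of $\Hoh^*_G$ (your vector-bundle step) together with Lemma~\ref{lem:localizationIso}/\ref{lem:localizationIsoEquiv}, whose proof is exactly your Borel--Moore vanishing argument applied to the finite-dimensional approximation $(Z_r\times U')/G\subseteq (V_r\times U')/G$. The codimension bookkeeping and the resulting range $n<2c_A(r)-1$ match the paper.
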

\begin{proof}
  The inclusion $U_r \to V_r$ induces an isomorphism
  \[  \Hoh^n(BG) \iso \Hoh^n_G(V_r) \to \Hoh^n_G(U_r) \]
  for  $n < 2c_A(r)-1$
  by virtue of Lemma \ref{lem:localizationIso} and homotopy invariance for $\Hoh^n_G(\cdot)$.
\end{proof}

  Suppose $R$ is a $k$-ring of finite, $B$ is an $R$-form of the algebra $A$, and  $(b_1, \dots, b_r) \in B^r$ is a generating $r$-tuple of elements. 
  Let $T \to X$ be the $G$-torsor associated to $B$. There is a $G$-equivariant classifying map $\phi_r:
  T \to U_r$ by Proposition~\ref{prop.equivariant-map-corres}, which induces a morphism
  \begin{equation}
    \label{eq:2}
     \phi_r^* : \Hoh^*_G(U_r) \to \Hoh_G^*(T) = \Hoh^*(\Spec R).
  \end{equation}
  This morphism is compatible with the natural maps from $\Hoh^*(B G)$ to $\Hoh^*_G(U_r)$ and $\Hoh^*_G(T)$. We have therefore proved:


\begin{lemma} \label{lem:prohibitGeneration} Suppose $X=\Spec R$ is an affine variety and $B$ is an $R$-form of $A$ with associated
  $G$-torsor $T$. Suppose that $B$ can be generated by $r$ elements. Then there is a commutative diagram of
  cohomology rings
  \[ \xymatrix{ & \Hoh^\ast(BG) \ar[dl] \ar[dr] \\ \Hoh^*_G(U_r) \ar[rr] & & \Hoh^*_G(T) = \Hoh^*(X) } \]
  in which the maps with source $\Hoh^\ast(BG)$ are the natural maps for $G$-equivariant cohomology.
  \qed
\end{lemma}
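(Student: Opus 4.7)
The plan is to assemble pieces already laid out in the paragraph preceding the statement. The content of the lemma is essentially a bookkeeping consequence of the correspondence between generating tuples of $B$ and $G$-equivariant maps out of the associated torsor, together with functoriality of Borel equivariant cohomology.

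First I would use the hypothesis that $B$ is generated by $r$ elements to produce a generating tuple $(b_1,\dots,b_r)\in B^r$. By Proposition~\ref{prop.equivariant-map-corres}, this tuple corresponds to a $G$-equivariant morphism $\phi_r\colon T\to U_r$ of $k$-schemes. Passing to $\C$-points yields a $G(\C)$-equivariant continuous map, and applying the Borel construction gives a map of quotients $(T(\C)\times EG)/G\to (U_r(\C)\times EG)/G$. Pulling back cohomology produces the bottom horizontal edge $\phi_r^*\colon \Hoh^*_G(U_r)\to \Hoh^*_G(T)$ of the claimed triangle.

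Next I would identify $\Hoh^*_G(T)$ with $\Hoh^*(X)$ using the observation already recorded in the subsection on equivariant cohomology of varieties: since $T\to X$ is a $G$-torsor (and $G$ is smooth, the base field being embedded in $\C$), the Borel quotient $(T(\C)\times EG)/G$ fibers over $X(\C)$ with contractible fiber $EG$ and is therefore weakly equivalent to $X(\C)$, giving the identification $\Hoh^*_G(T)\iso \Hoh^*(X)$.

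Finally, commutativity of the triangle is the naturality statement that both downward maps from $\Hoh^*(BG)$ are induced by the unique $G$-equivariant morphisms to $\Spec k$, and $\phi_r$ manifestly factors the $G$-equivariant structure map $T\to \Spec k$ through $U_r$. There is no substantive obstacle to overcome; the entire argument is bookkeeping on naturality of Borel equivariant cohomology combined with Proposition~\ref{prop.equivariant-map-corres}, which is why the statement is worded as ``we have therefore proved.''
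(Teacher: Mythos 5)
Your proposal is correct and follows essentially the same route as the paper: invoke Proposition~\ref{prop.equivariant-map-corres} to turn the generating $r$-tuple into a $G$-equivariant classifying map $\phi_r\colon T\to U_r$, apply Borel equivariant cohomology, identify $\Hoh^*_G(T)$ with $\Hoh^*(X)$ via the torsor property, and deduce commutativity from functoriality over the unique $G$-equivariant maps to a point. Nothing further is needed.
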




  If $X$ is a regular $k$-variety, then there exists a vector bundle $E$ on $X$ and an $E$-torsor $p : W \to X$ for which $W$ is an
  affine scheme. This version of the Jouanolou construction is due to Thomason, and is proved in greater generality in \cite[Proposition 4.4]{WeibelHomotopyalgebraictheory1989}.

The vector bundle $E$ is to be viewed as a group-scheme on $X$ where the group structure is given by addition in the
vector bundle. When the construction is applied to a regular variety $X$, the result is an affine $k$-variety
$Y$ such that there is a map $p : Y \to X$ that is Zariski-locally isomorphic to the projection $U \times \A^n_k \to
U$. In particular, the map $p: Y(\C) \to X(\C)$ is a fibre bundle with contractible fibres, and is therefore a homotopy
equivalence, \cite[Theorem 7.57]{James1984}.

\begin{theorem} \label{thm:produceCounterexample}
  Let $k$ be a field with a chosen embedding $k \hookrightarrow \CC$ and $A$ be a finite-dimensional algebra over $k$.
  Suppose the natural map $\Hoh^i(BG) \to \Hoh^i_G(U_r)$ is not injective for some $r \geqslant \gen_k(A)$.
  Then there exists a finite type $k$-ring $R$ and an $R$-form $B$ of $A$ so that $\Kdim R = i$ and $\gen_{R_{\CC}}(B_\CC) > r$.
\end{theorem}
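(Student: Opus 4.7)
The strategy is to find an affine $k$-variety $X$ of dimension $i$ carrying a $G$-torsor $T\to X$ such that a suitable class $\alpha\in\Hoh^i(BG)$ lying in the kernel of $\Hoh^i(BG)\to\Hoh^i_G(U_r)$ remains nonzero when pulled back to $\Hoh^i(X)$. Then $R=\mathcal{O}(X)$ and $B={}^T\!A$ will do the job: if $B_{\CC}$ were generated by $r$ elements over $R_{\CC}$, the diagram of Lemma~\ref{lem:prohibitGeneration} applied to $X_{\CC}$ would factor the image of $\alpha$ in $\Hoh^i(X)=\Hoh^i(X_{\CC})$ through $\Hoh^i_G(U_r)$, on which $\alpha$ vanishes, contradicting nonvanishing on $X$.

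The plan is to build $X$ in three stages. First, using Remark~\ref{rem.approximation}, fix a faithful $G$-representation $V'$ and a $G$-stable open $U'\subseteq V'$ on which $G$ acts freely, with $V'\setminus U'$ of codimension much larger than $i$; then $X_0:=U'/G$ is a smooth quasi-projective $k$-variety, the torsor $U'\to X_0$ classifies a map whose associated twist recovers $A$ itself on $U'$, and the natural map $\Hoh^j(BG)\to\Hoh^j(X_0)$ is an isomorphism for $j\le i$ (the same argument as in Lemma~\ref{lem:isoRange}, applied to $V'$ in place of $V_r$). Second, apply the Jouanolou--Thomason construction to produce an affine $k$-variety $X_0''$ with a vector-bundle torsor $q:X_0''\to X_0$, which is a homotopy equivalence on $\CC$-points; pull the $G$-torsor back to $X_0''$. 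Third, embed $X_0''$ in some affine space $\A^N_k$ and iterate Theorem~\ref{th:affineLefschetz}: each step produces a smooth closed hyperplane section defined over $k$ of dimension one less, with $\Hoh^j$ unchanged for $j$ below the new dimension minus one and an injection in the top degree. After $\dim X_0''-i$ steps one obtains a smooth affine $k$-subvariety $X\subseteq X_0''$ of dimension exactly $i$ such that $\Hoh^i(X_0'')\hookrightarrow\Hoh^i(X)$ is injective. Restrict the torsor to get $T\to X$, and set $R=\mathcal{O}(X)$, $B={}^T\!A$.

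By construction $R$ is a finite type $k$-ring of Krull dimension $i$, and the image of $\alpha$ under
\[
\Hoh^i(BG)\isomto\Hoh^i(X_0)\isomto\Hoh^i(X_0'')\hookrightarrow\Hoh^i(X)
\]
is nonzero, so the cohomological obstruction above prevents $B_{\CC}$ from being generated by $r$ elements. The hypothesis $r\ge\gen_k(A)$ is used implicitly to ensure that $U_r$ is a nonempty smooth $G$-variety so that the comparison map $\Hoh^i(BG)\to\Hoh^i_G(U_r)$ and Lemma~\ref{lem:prohibitGeneration} make sense. The main obstacle I anticipate is the iterated Affine Lefschetz step: one must verify that the smooth hyperplane section in Theorem~\ref{th:affineLefschetz} can be taken over $k$ at every stage (so that the final $X$, $R$, and $B$ are genuinely defined over $k$) and that iterating does not accidentally destroy the injectivity in degree $i$; both issues are handled by the fact that the set of good hyperplanes is Zariski dense and open over $\QQ$, so a $k$-rational hyperplane always exists, and the cohomological inequalities compose correctly as long as the dimension never drops below $i$ during the iteration.
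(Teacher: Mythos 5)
Your argument is correct, and the obstruction mechanism at its heart (Lemma~\ref{lem:prohibitGeneration} plus Jouanolou plus iterated Affine Lefschetz) is exactly the paper's. Where you genuinely diverge is in the choice of the initial finite-dimensional model for $BG$: the paper starts from the torsor $U_{r+t}\to U_{r+t}/G$ for $t\gg 0$, which forces it to invoke Proposition~\ref{prop:cAr-increasing} (and hence the hypothesis $r\ge\gen_k(A)$) to guarantee $2c_A(r+t)-1>i$, and then to multiply by an auxiliary open piece $W$ of a representation to repair the fact that $U_{r+t}/G$ is a priori only an algebraic space. You instead take an arbitrary admissible approximation $U'/G$ straight from Remark~\ref{rem.approximation}, which is already a quasi-projective scheme with $\Hoh^j(BG)\isomto\Hoh^j_G(U')$ for $j\le i$; this bypasses both Proposition~\ref{prop:cAr-increasing} and the algebraic-space repair, so your route is somewhat leaner. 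The trade-off is that the paper's construction yields the byproduct that $B$ is generated by $r+t$ elements (not needed for the stated conclusion, and not needed in the later applications), and it makes the hypothesis $r\ge\gen_k(A)$ do real work rather than serving only to make $U_r$ nonempty. One small point you should make explicit: to run the Lefschetz descent down to dimension exactly $i$ you need $\dim(U'/G)=\dim V'-\dim G\ge i$, so $V'$ must be chosen with $\dim V'\ge i+\dim G$ as well as with $\operatorname{codim}(V'\sm U')>(i+1)/2$; this is always arrangeable (enlarge $V'$ by direct sums) and is the analogue of the paper's explicit condition $(r+t)\dim(A)\ge i+\dim(G)$ in~\eqref{e.choice-of-t}.
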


In particular, $\gen_{R}(B) > r$; see Lemma~\ref{lem:requireForLefschetz}(a).

\begin{proof} Recall that by Proposition~\ref{prop:cAr-increasing} 
$c_A(r + t) \to \infty$ as $t \to \infty$. Choose a sufficiently large positive integer $t$ so that 
\begin{equation} \label{e.choice-of-t} 2c_A(r + t)-1 > i \quad \text{and} \quad (r+t) \dim(A) \geqslant i + \dim(G). 
\end{equation}
The algebra $B$ will be defined as ${\, }^T A$, for a suitable $G$-torsor $T \to \Spec R$.
Our goal is to construct this torsor. We will do so by starting with the $G$-torsor $U_{r + t} \to U_{r+t}/G$ 
and modifying it in stages. At each stage, we will produce a $G$-torsor $Y \to Y/G$ so that $Y$ has 
the same $G$-equivariant cohomology as $U_{r+t}$ in degrees less than or equal to $i$.

  The first problem with $U_{r+t}$ is that $U_{r+t}/G$ may not be a scheme, only an algebraic space; see Remark~\ref{rem.alg-space}.
  To fix this, recall that by Remark~\ref{rem.approximation} there exists a linear representation $G \to \GL(\bar W)$
  and a $G$-invariant open subset $W \subset \bar{W}$ such that 
  $\bar W \sm W$ has codimension at least $i/2$ and $W$ is the total space of a $G$-torsor $W \to W/G$ in the
  category of schemes. There is now a torsor $q':U_{r + t} \times W \to (U_{r + t} \times W)/G$ in the category of schemes; see
  \cite[Prop.~23(1)]{EdidinEquivariantintersectiontheory1998} for instance. The quotient scheme $(U_{r + t} \times W)/G$ is
  separated \cite[Lemma 0.6]{Mumford1994} and quasi-compact, being the surjective image of a
  quasi-compact scheme. It is of finite type by \cite[Prop.~1.5.4(v)]{Grothendieck1964},
  since $U_r \times W$ is of finite type. The morphism $U_{r + t} \times W \to (U_{r + t} \times W)/G$ is smooth, since $G$ is
  smooth, and because $U_{r + t} \times W$ is a smooth $k$-variety, it follows from
  \cite[Prop.~17.7.7]{Grothendieck1967} that $(U_{r + t} \times W)/G$ is a smooth $k$-variety.

  The codimension of $\bar W \sm W$ in $W$ is at least $i/2$, so that there is an isomorphism $\Hoh^j_G(U_{r+t}) \to
  \Hoh^j_G(W \times U_{r+t})$ for all $j \le i$  (see Lemma~\ref{lem:localizationIsoEquiv}).

  The next problem is that $(W \times U_{r+t})/G$ is not affine in general. We will approximate it by an affine scheme as follows.
  Write $f: \Spec R \to (W \times U_{r+t})/G$
  for an affine vector-bundle torsor using the Jouanolou construction
  , and consider the pullback
  \begin{equation}
    \label{eq:9}
    \begin{tikzcd}
    T \arrow{r}{F} \arrow{d}{q} \arrow[dr, phantom, "\lrcorner", very near start]  & W \times U_{r+t} \arrow{d} \\
    \Spec R \arrow{r}{f} & (W \times U_{r+t})/G.
  \end{tikzcd}
  \end{equation}
  The composite of $F$ with the projection map $W \times U_{r+t}  \to U_{r + t}$ is
  $G$-equivariant and classifies an $R$-form $B$ of $A$, which is generated by $r+t$ elements.\benw{Revisit this after
    the ``associated forms'' part has been finalized.} The map $f$ is a vector-bundle
  torsor, and therefore induces an isomorphism on cohomology:
  \[ \Hoh_G^*(W\times U_{r+t}) \iso \Hoh^*((W\times U_{r+t})/G) \overset{f^*}{\longrightarrow} \Hoh^*(\Spec R) \iso
    \Hoh^*_G(T). \]
 We now define the $R$-algebra $B = {\, }^T A$ by twisting $A$ by $T$. We claim that $B_\C$ cannot be generated by $r$
  elements. 
  
  Suppose for the sake of contradiction that $B_\C$ can be generated by $r$ elements. Then there is a $G_\C$-equivariant map 
  $\phi \colon T_\C \to U_{r, \C}$ representing $B_\C$ and a generating $r$-tuple of elements, 
  by Proposition~\ref{prop.equivariant-map-corres}.  
  This leads us to the following commutative diagram 
  \begin{equation} \label{eq:4tri}
    \xymatrix{  & \ar_\iso[dl] \ar[d] \Hoh^i(BG) \ar[d] \ar[dr] \ar@/^1em/[drr]^{\quad \text{non-injective}} \\  \Hoh^i_G(U_{r+t})   \ar_-\iso[r] & \Hoh^i_G(W \times U_{r+t})  \ar@{->}[r]_-\iso & \Hoh^i_G(T) & \Hoh^i_G(U_r) \ar@{->}[l]_{\phi^*} }
  \end{equation}
  in which the first three groups in the bottom row are isomorphic.  The rightmost triangle in \eqref{eq:4tri} is the
  triangle of Lemma \ref{lem:prohibitGeneration}, and the leftmost diagonal arrow is an isomorphism by Lemma
  \ref{lem:isoRange}. The isomorphism $\Hoh^i(BG) \to \Hoh^i_G(T)$ in \eqref{eq:4tri} must factor
  through the non-injective map $\Hoh^i(BG) \to \Hoh^i_G(U_r)$, a contradiction. Therefore, $B_\C$ cannot be generated by $r$
  elements. This proves the claim.
 
  Unfortunately, $B = {\, }^T A$ is not yet the algebra we are looking for. Indeed, 
  \[ \Kdim(R) \geqslant \dim \, (W \times U_{r + t})/ G > \dim(U_{r + t}/G) = (r + t)\dim(A) - \dim(G) \geqslant i ; \]
  see~\eqref{e.choice-of-t}, whereas we want $\Kdim(R) = i$. To complete the proof, we modify the torsor
  $T \to \Spec(R)$ to reduce $\Kdim(R)$ to $i$. To do this, we embed $\Spec R $ into an affine space $\mathbb A^N$. By
  Theorem~\ref{th:affineLefschetz}, provided $\Kdim(R) \ge i + 1$, there is a smooth affine hyperplane section
  $\iota \colon \Spec R' \hookrightarrow \Spec R$ such that $\iota^*:\Hoh^i(\Spec R) \to \Hoh^i(\Spec R')$ is
  injective. If we write $T'$ for the pullback of the $G$-torsor $T$ along $\iota$, we deduce that
  $\iota^* : \Hoh_G^i(T) \to \Hoh_G^i(T')$ is also injective. Observe that $\Kdim R' = \Kdim R -1$.

  The $R'$-algebra $B' = ({\, }^{T'} A)_{\CC}$ cannot be generated by $r$ elements, otherwise there would exist a
  $G$-equivariant morphism $\phi \colon T'_\C \to U_{r, \C}$ giving rise to a commutative diagram:
\[
    \xymatrix{   \Hoh^i(BG) \ar_{\iso}[dr] \ar@/^1em/[drr] & & \\   & \Hoh^i_G(T) \ar@{^{(}->}[d] & \Hoh^i_G(U_r)
      \ar@{->}[dl]^{\phi^*} \ar[l] \\
                                                                      & \Hoh^i_G(T').  & }
  \]  
  Here the injective map $\Hoh^i(BG) \to \Hoh^i_G(T')$ factors
  through the non-injective map $\Hoh^i(BG) \to \Hoh^i_G(U_r)$, which is a contradiction.
  Replacing $R$ by $R'$ and $T$ by $T'$,
  we reduce $\Kdim  R$ by $1$ while preserving the property that $\gen_{R_{\mathbb C}}(B_{\mathbb C}) > r$.
 Since this procedure can be repeated as long as $\Kdim(R) > i$, we eventually arrive at an example where $\Kdim   R = i$. 
\end{proof}

\section{Algebras requiring many generators}
\label{sect.main3}

Recall that if $X$ is defined over $k \hookrightarrow \CC$, then we write $\Hoh^*(X)$ for $\Hoh^*(X(\CC))$. Similarly, we
write $\Hoh^*(BG)$ in place of $\Hoh^*(BG(\CC))$.

In this section, we establish Theorem \ref{thm.main3} by applying Theorem \ref{thm:produceCounterexample}. We must first
prove that $\Hoh^i(BG)$ is nonzero for many values of $i$, which we do in Lemma \ref{lem:cohoEvery4}. Our main tool,
singular cohomology of the complex points, can be used only when the base field is embedded in $\CC$, but by recourse to
the Lefschetz principle in
Lemma~\ref{lem:requireForLefschetz}, we will be able prove Theorem \ref{thm.main3} over any field of characteristic $0$.

\begin{lemma} \label{lem:cohoEvery4}
  Let $k$ be a field with a fixed embedding in $\C$. Let $G$ be an affine algebraic group over $k$ that is not
  unipotent. Then there exists a natural number $\rho_G$ such that for all
  $i \ge 1$, there exists $j \in \{i+1, i+2,\dots, i+\rho_G\}$ such that
  \[ \Hoh^j(BG) \not \iso 0. \]
  If $H$ denotes the quotient of the identity connected component of $G$ by its unipotent radical, and $Z$ is the centre of $H$, then we can take $\rho_G=4$ if $\dim H>0$, and $\rho_G=2$ if $\dim Z>0$.
\end{lemma}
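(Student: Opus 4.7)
The plan is to produce nonzero classes in $\Hoh^*(BG;\Z)$ that are densely enough spaced. The core mechanism: if I can exhibit an even-degree class $\zeta\in\Hoh^{j_0}(BG;\Z)$ of infinite order, then $\zeta^n\in\Hoh^{nj_0}(BG;\Z)$ is nonzero for every $n\geq 0$, because $\Hoh^*(BG;\Q)$ embeds into the polynomial ring $\Hoh^*(BT;\Q)$ on the characters of a maximal torus $T$ of a maximal compact of $G(\CC)$. In that case taking $\rho_G=j_0$ works, as every window of $j_0$ consecutive positive integers contains a multiple of $j_0$.

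For the case $\dim H>0$, I will aim at $\rho_G=4$. If $[H,H]=1$, then $H$ is a torus with $\dim Z(H)>0$ and the stronger case below applies. Otherwise, I would take $\zeta=c_2(\mathrm{Ad})\in\Hoh^4(BG;\Z)$, the second Chern class of the adjoint representation $\mathrm{Ad}\colon G\to\GL(\mathrm{Lie}(G^0))$. Restricted to a maximal torus of $[H,H]$, this class computes, up to sign, the Killing form — a nonzero $W$-invariant polynomial of degree $2$ in $\Hoh^2(BT;\Q)$ — so it has infinite order, and its powers produce the required non-torsion classes in $\Hoh^{4n}(BG;\Z)$.

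For the case $\dim Z(H)>0$, I aim at $\rho_G=2$. Let $T=Z(H)^0$, which by the Levi decomposition $G^0=U\rtimes H$ (valid in characteristic $0$) is a nontrivial central torus of $G^0$ normalized by $G$. If some algebraic character $\chi\colon G\to\Gm$ has infinite image, I take $\zeta=c_1(\chi)\in\Hoh^2(BG;\Z)$ and conclude as above. The hard subcase is when every character of $G$ has finite order — equivalently $\Gamma=G/G^0$ acts on $X^*(T)\otimes\Q$ with no invariants, as happens for $G=\OG_2$. Here I would resort to a mod-$p$ argument for a prime $p$ dividing the order of a nontrivial element of $\Gamma$ acting on $T$: the polynomial subring of $\Hoh^*(BG;\FF_p)$ generated by symmetric functions of $\Gamma$-orbits of characters of $T$, combined with a degree-$1$ class detecting $\Z/p\subseteq\Gamma$, is nonzero in every degree, and the universal coefficient theorem then propagates this to integral cohomology with a loss of at most one degree, which still yields $\rho_G=2$.

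The general non-unipotent case reduces to the above: either $\dim H>0$, or $G^0$ is unipotent and $\Gamma$ is a nontrivial finite group. In the latter situation $BG(\CC)\simeq B\Gamma$ by contractibility of $U(\CC)$, and for any nontrivial finite group $\Gamma$ the ring $\Hoh^*(B\Gamma;\FF_p)$ has Krull dimension at least $1$ for some $p\mid|\Gamma|$ by Quillen's theorem, forcing bounded gaps in $\Hoh^*(B\Gamma;\Z)$ via the universal coefficient theorem. I expect the ``no infinite character'' subcase of $\dim Z(H)>0$ to be the main obstacle: extracting torsion classes at every degree requires a delicate analysis of the Serre spectral sequence for $BG^0\to BG\to B\Gamma$ together with the Bockstein structure.
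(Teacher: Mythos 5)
Your overall strategy---manufacture a non-nilpotent even-degree class on $BG$ and detect it on a subtorus, so that its powers populate an arithmetic progression of degrees---is sound, and where it is carried out it is a genuinely different route from the paper's. The paper reduces to connected reductive $G$, invokes Chevalley's computation of $\Hoh^*(G;\Q)$ together with two Serre spectral sequence arguments to produce the class in degree $4$ or $2$, and cites Benson for finite groups. Your $c_2(\mathrm{Ad})$ construction and your Quillen-plus-universal-coefficients argument for finite component groups replace these with explicit classes, and, importantly, your classes are defined on $BG$ itself rather than on $BG^0$, which sidesteps the descent problem for disconnected groups. (The description of $c_2(\mathrm{Ad})|_{BS}$ as ``the Killing form up to sign'' is not literally correct, since the weights of $S$ on the Lie algebra of the unipotent radical also contribute; but those weights form a Weyl-symmetric multiset, so the first elementary symmetric function of all the weights vanishes, the restriction equals $-\tfrac{1}{2}\sum\lambda^2$, a negative definite form on $X^*(S)\otimes\R$, and your conclusion stands.)

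The gap is the one you flag yourself, and it is genuine: the subcase $\dim Z(H)>0$ in which $G$ admits no character of infinite order, equivalently $\left(X^*(Z(H)^0)\otimes\Q\right)^{\Gamma}=0$. Since your $\dim H>0$ branch defers to this case whenever $[H,H]=1$, a group such as $G=\Gm\rtimes\Z/2$ (inversion action; over $\C$ this is $\OG_2$) falls entirely through your case analysis---you do not even establish the existence of $\rho_G$ for it, let alone the value $2$. The difficulty is real and not an artifact of your approach: $\Hoh^*(B\OG_2;\Q)\iso\Q[p_1]$ is concentrated in degrees divisible by $4$, so no rational argument can yield $\rho_G=2$ here; one must use torsion ($\Hoh^*(B\OG_2;\FF_2)=\FF_2[w_1,w_2]$ is nonzero in every degree, and the universal coefficient theorem then bounds the gaps in integral cohomology by one). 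Your proposed mod-$p$/Bockstein strategy is the right idea for this example but remains a sketch in general. For what it is worth, the paper's own proof is thin at exactly this point: it reduces to $G^0$ by asserting $\Hoh^j(BG^0;\Q)=\Hoh^j(BG;\Q)$, whereas the transfer only identifies $\Hoh^j(BG;\Q)$ with the $\Gamma$-invariants of $\Hoh^j(BG^0;\Q)$, and for $\OG_2$ those invariants vanish in degree $2$. If you only want the existence of $\rho_G$ in this subcase, you can close the gap cheaply: $\Hoh^*(BG;\Q)$ is the invariant ring of a finite group acting on the polynomial ring $\Hoh^*(BS;\Q)$ for a maximal torus $S$ of $G^0$, hence contains a non-nilpotent homogeneous element of some positive even degree, whose powers furnish a (possibly larger) $\rho_G$. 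The refined value $\rho_G=2$ genuinely requires the torsion analysis you defer.
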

\begin{proof}
  Since the cohomology depends only on $G(\CC) = G_\CC(\CC)$, and since $G_\CC$ is unipotent if and only if $G$ is, there is nothing
  to be lost by assuming $k=\CC$.
  
  The proof proceeds by a sequence of reductions. First, consider the natural exact sequence
  \begin{equation*}
    \begin{tikzcd}
      1 \rar & N \rar & G \rar & H \rar & 1 
    \end{tikzcd}
  \end{equation*}
  of algebraic groups, where $N$ is the unipotent radical and $H$ is reductive; see~\cite[Theorem 4.3]{Hochschild1981}. 
  Since $N$ is unipotent, it admits a composition series by subgroups having quotients isomorphic to $\Ga$ \cite[IV Th.~de Lazard
  4.1]{Demazure1970}. It follows that $N$ is contractible as a topological space and so $BG \weq BH$. Without
  loss of generality, therefore, we may assume that $G$ is reductive.

   If $G$ is finite, then $\rho_G$ exists by \cite[Theorem 2.4]{Benson1990}. Thus, we may assume that
  $G$ is infinite, i.e., $\dim G > 0$. We will show that in this case $ \Hoh^j(BG; \QQ) \not \iso 0$ (and hence, $\Hoh^j(BG; \ZZ) \not \iso 0$)
  for some 
  $j \in \{i+1, i+2, i+3, i+4\}$. 
  Let $G^0$ denote the identity component of $G$.
  Since the component group $G/G^0$ is finite,
  $\Hoh^j(BG^0; \QQ) = \Hoh^j(BG; \QQ)$.  
  We may therefore assume that $G$ is nontrivial, connected and reductive.
  Let $Z$ be the centre of $G$. We will treat the cases where $Z$ is finite and infinite separately.

  If $Z$ is finite, then $G$ is a semisimple linear algebraic group. By \cite[Theorem 21.2]{Chevalley1948}, we know that
  $\tilde \Hoh^l(G; \QQ) \iso 0$ for values of $l \in \{0,1,2\}$ but $\tilde \Hoh^3(G; \QQ) \not \iso 0$. It is an easy
  consequence of the Serre spectral sequence that $\tilde \Hoh^l(BG; \QQ) \iso 0$ for $l \in \{0,1,2,3\}$ but there
  exists a nonzero element $\alpha \in \tilde \Hoh^4(BG; \QQ)$. We know, however, that
  $\Hoh^*(BG; \QQ) \iso \Hoh^*(BT; \QQ)^W$, where the latter denotes the Weyl invariants of the cohomology of a maximal
  torus of a maximal compact subgroup of $G(\C)$, by \cite[Theorem 11]{Malcev1945} and \cite[Reduction 2,
  p36]{Hsiang1975}. Since $\Hoh^*(BT; \QQ)$ is a polynomial ring in some number of generators, the element $\alpha$ of
  the subring $ \Hoh^*(BT; \QQ)^W$ cannot be nilpotent. Therefore the elements $\alpha^l$ form a family of nonzero
  elements in $\Hoh^{4l}(BG; \QQ)$. In this case, we may take $\rho_G = 4$.

  If $Z$ is not finite, then $G$ contains a nontrivial torus $Z'$ such that the quotient $G^{\text{ss}} = G/Z'$ is semisimple. In
  particular, $\tilde \Hoh^l(BG^{\text{ss}}; \QQ)$ vanishes for values of $l \in \{0, 1, 2, 3\}$. A Serre spectral sequence
  argument applied to
  \[
    \begin{tikzcd}
      BZ' \rar & BG \rar & BG^{\text{ss}}
    \end{tikzcd}
 \]
  shows that we can find a nontrivial element $\alpha \in \Hoh^2(BG; \QQ)$ in this case, and we can repeat the
  same argument as in the previous case to show that the $\alpha^l$ form a family of nonzero elements in $\Hoh^{2l}(BG;
  \QQ)$. In this case, we may take $\rho_G = 2$.
\end{proof}

\begin{remark}
 The construction of $\rho_G$ in Lemma \ref{lem:cohoEvery4} uses the algebraic group structure of $G_\CC$. However, since
 $\Hoh^*(BG(\CC))$ depends only on the Lie group $G(\CC)$, we may assume that $\rho_G$ that depends only on $G(\CC)$ and not on $G$.
\end{remark}

\begin{proof}[Proof of Theorem~\ref{thm.main3}]
   First of all, we consider the case where the field $k$ is a subfield of $\CC$. Set $r$ to be the largest integer satisfying
  \begin{equation} \label{e.pf-of-main3b}
  d \geqslant 2rn - 2 \dim_{\CC} G + \rho_G .
  \end{equation}   
In other words, set 
  \begin{equation} \label{e.pf-of-main3a}  r = \Floor{\dfrac{d + 2 \dim G - \rho_G}{2n}}. \end{equation}
By Lemma~\ref{lem:cohoEvery4}, there exists an integer $d + 1 - \rho_G \leqslant i \leqslant d$ such that $\Hoh^i(BG) \neq 0$. By~\eqref{e.pf-of-main3b},
  $i > 2rn - 2\dim_{\CC} G$. 
  
  We claim that $\Hoh_{G}^i(U_r) = 0$.  Consider the quotient $U_r(\C) \to U_r(\C)/G(\C)$. This is a principal $G(\C)$-bundle over a manifold of dimension $2rn-2\dim_\C G$, because the Lie group $G(\C)$ acts properly on $U_r(\C)$ by Lemma \ref{pr:schemeTheoreticallyFree}. The quotient map $EG(\C) \times_{G(\C)} U_r(\C) \to U_r(\C)/G(\C)$ is a fibre bundle with fibre $EG(\C)$, as one can see by using an open cover of the manifold $U_r(\C)/G(\C)$ trivializing
$U_r(\C) \to U_r(\C)/G(\C)$. This implies that $\Hoh^\ast_{G}(U_r)\iso \Hoh^\ast (U_r(\CC)/G(\CC))$,
see~\cite[Theorem~7.57]{James1984}. On the other hand, we know that $\Hoh^j(U_r(\CC)/G(\CC)) = 0$ whenever $j >  2rn -2 \dim G$, since
$U_r(\C)/G(\C)$ is a manifold of real dimension $2rn -2 \dim_{\CC} G$. This proves the claim.
  
  We conclude that the map on equivariant cohomology
   $\Hoh^i(BG) \to \Hoh_{G}^i(U_r)$ is not injective. By~Theorem \ref{thm:produceCounterexample}, there exists a $k$-ring $R$ of finite type and
   an $R$-form $B$ of $A$ such that $\dim R = i \leqslant d$ and $\gen_{R_\CC}(B_{\CC}) > r$. Substituting in the formula for $r$ 
   from~\eqref{e.pf-of-main3a} and remembering $\gen_{R_{\CC}}(B_{\CC})$ is an integer,
   we can rewrite this inequality as
   \[ \gen_{R_{\CC}}(B_{\CC}) > \dfrac{d + 2 \dim G - \rho_G}{2n} . \]
  If $i = \Kdim R < d$, we replace $R$ by the polynomial ring $R' = R[t_1, \ldots, t_{d - i}]$ and $B$ by 
   $B' = B \otimes_R R'$; it is easy to see that $\Kdim R' = d$ and 
   \[ \gen_{R'_{\CC}}(B'_{\CC}) = \gen_{R_{\CC}}(B_{\CC}) > \frac{d + 2 \dim G - \rho_G}{2n}. \]
   By Lemma~\ref{lem:requireForLefschetz}\ref{le1}, this
   completes the proof of Theorem~\ref{thm.main3} in the case where $k$ is a subfield of $\CC$.
   
  Now we establish the general case, where $k$ is arbitrary field of characteristic $0$.
  Since the finite-dimensional $k$-algebra $A$ may be defined by means of finitely many structure constants,
  there exists a subfield $k_0 \subset k$ and a finite-dimensional $k_0$-algebra $A_0$, 
  such that $k_0$ is finitely generated over $\QQ$ and $A_0 \tensor_{k_0} k \iso A$. Since $k_0$ is finitely generated
  over $\QQ$, we may embed $k_0$ in $\CC$. Using the first part of this proof, we produce a $k_0$-ring $R_0$ and a
  form $B_0$ of $A_0$ over $R_0$ such that $\Kdim(R_0) = d$ and 
  \[ \gen_{(R_0)_\CC}((B_0)_{\CC}) > \frac{d + 2 \dim G - \rho_G}{2n}. \]
  By Lemma~\ref{lem:requireForLefschetz}\ref{le3}, 
  \[ \gen_{R}(B) > \frac{d + 2 \dim G - \rho_G}{2n} , \]
where $R = R_0 \otimes_{k_0} k$ is a finite type $k$-ring and $B = B_0 \otimes_{k_0} k$ is an $R$-form of $A$.
Moreover, $\Kdim R = \Kdim R_0 = d$, 
as required.
\end{proof}  

\begin{remark} \label{rem.unipotent} Theorem~\ref{thm.main3} fails if 
$G$ is unipotent. Indeed, in this case every $G$-torsor $T \to \Spec R$ splits, see 
e.g.,~\cite[Corollary 3.2]{Asok2007}.\footnote{Here we are assuming that $\operatorname{char}(k) = 0$, as in Theorem~\ref{thm.main3}. 
This is also a standing assumption in~\cite{Asok2007}.} 
This tells us that the only $R$-form $B$ of $A$ (up to isomorphism) is $B = A \otimes_k R$ and consequently,
$\gen_R(B) \leqslant \gen_k(A)$ for every $R$.
\end{remark}

\begin{remark} \label{rem.aut-arbitrary}
  Even among the class of finite-dimensional commutative associative and unital algebras it is possible for the
  automorphism group to be nontrivial and unipotent. In
  \cite[Example 1.7]{Pollack1989}, R.~D.~Pollack shows that the automorphism group of 
  the finite-dimensional commutative, associative and unital $\CC$-algebra
  \[ A = \frac{\CC[x,y]}{(x,y)^5 + (x^2-y^3, x^3-y^4)} \]
    is unipotent. Note that this group is nontrivial,
    since $x \mapsto x + y^4,\, y\mapsto y$ gives a nontrivial automorphism. 
%
  \end{remark}

\section{Geometry of matrix tuples}
\label{sect.matrix-tuples}

In this section and the next, we return to the study of the specific case where $A$ denotes the algebra of $s \times s$
matrices, $\Mat_s(k)$. We will assume $k$ is  field of characteristic $0$ and $s \geqslant 3$ throughout. 
We continue to use the notation $V_r$ to denote the affine variety whose
$k$-points are $A^r$, the notation $Z_r$ to denote the closed subscheme of $r$-tuples that do not generate $A^r$, and
the notation $U_r$ for $V_r \sm Z_r$. Recall from Section~\ref{subsect.azumaya} that
\[ Z_r^{\mathrm{red}} = X_1 \cup \ldots \cup X_{s-1}, \] where $X_i$ is the variety 
of $r$-tuples $(a_1, \dots, a_r)$ of $s \times s$-matrices having a common $i$-dimensional invariant
subspace. Proposition~\ref{prop.Z}(a) says that $X_i$ is a closed irreducible subvariety of $V_r$ of dimension
$rs^2 - (r-1)i(s-i)$. We will be particularly interested in $X_1$, along with the $\PGL_s$-invariant subvariety $T_2$ of $V_r$ 
that we define as the variety of $r$-tuples $(a_1, \dots, a_r)$ of $s\times s$-matrices satisfying the following conditions:

\begin{itemize}
\item there exists a subspace $W$ having dimension at least $2$, invariant under each $a_i$, $i = 1, \ldots, r$;
\item $a_1, \ldots, a_r$ commute pairwise when restricted to $W$.
\end{itemize}

\smallskip
Our goal is to prove Theorem~\ref{thm.azumaya}(b). We will do this in the next section by showing that the natural map
\[ \Hoh^{2(r-1)(s-1)}( B\PGL_s) \to \Hoh^{2(r-1)(s-1)}_{\PGL_s}(U_r) \] is not injective, then appealing to Theorem~\ref{thm:produceCounterexample}. 
Since cohomological tools apply best to smooth varieties and transverse intersections, our focus in this section is proving
the following two preparatory results.

\begin{proposition} \label{prop.Zsmooth2}
The variety $X_1$ is smooth away from $T_2$.
\end{proposition}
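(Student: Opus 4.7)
The plan is to reduce to the smoothness of the incidence variety
$Y_1 = \{(a_1,\ldots,a_r,[v]) \in V_r \times \P^{s-1} : a_i v \in \langle v\rangle \text{ for all } i\}$
introduced in the proof of Proposition~\ref{prop.Z}(a), via the projection $\pi_1 \colon Y_1 \to X_1$.
As recalled there, $Y_1$ is a Zariski-locally trivial affine-space bundle over $\P^{s-1}$, hence smooth and
irreducible of dimension $\dim X_1 = rs^2 - (r-1)(s-1)$. I aim to show that $\pi_1$ restricts to an isomorphism
$\pi_1^{-1}(X_1\setminus T_2) \isomto X_1\setminus T_2$; since the source is open in the smooth variety $Y_1$, this
will give the proposition.

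First I would check that $\pi_1$ is bijective on closed points over $X_1\setminus T_2$. If a tuple
$(a_1,\ldots,a_r)$ admitted two distinct common eigenlines $[v]$ and $[v']$, then $W = \langle v, v'\rangle$ would be
a common $2$-dimensional invariant subspace on which each $a_i|_W$ is diagonal in the basis $(v,v')$; the $a_i|_W$ would
then commute pairwise, placing $(a_1,\ldots,a_r) \in T_2$.

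Next, I would compute the Zariski tangent space of the scheme-theoretic fibre $\pi_1^{-1}(a_1,\ldots,a_r)$ at its
unique point $[v]$. Writing out the condition $a_i(v+\varepsilon \dot v) \in \langle v + \varepsilon \dot v\rangle$
over $k[\varepsilon]/(\varepsilon^2)$ identifies this tangent space with
\[
  \bigcap_{i=1}^{r} \ker\bigl(\overline{a}_i - \lambda_i \cdot \id\bigr) \;\subset\; V/\langle v\rangle,
\]
where $a_i v = \lambda_i v$ and $\overline{a}_i$ denotes the endomorphism of $V/\langle v\rangle$ induced by $a_i$.
If a nonzero class $\overline{\dot v}$ lay in this intersection, then for any lift $\dot v \in V$ the subspace
$W = \langle v, \dot v\rangle$ would be invariant under each $a_i$ with matrix
$\left(\begin{smallmatrix} \lambda_i & \ast \\ 0 & \lambda_i \end{smallmatrix}\right)$ in the basis $(v,\dot v)$.
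Any two upper-triangular $2\times 2$ matrices with equal diagonal entries commute, whence $(a_1,\ldots,a_r) \in T_2$. Hence the tangent
space vanishes off $T_2$, so the fibre is scheme-theoretically $\Spec k$.

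Finally, $\pi_1$ is proper as a restriction of the projection $V_r \times \P^{s-1} \to V_r$. Over $X_1\setminus T_2$
it is proper with $0$-dimensional fibres, hence finite. A finite morphism whose scheme-theoretic fibres are reduced
single points is a closed immersion, by Nakayama applied to its finite $\calO$-algebra structure; being also
surjective onto the reduced scheme $X_1\setminus T_2$, it must be an isomorphism. The main obstacle is
recognizing that both failure modes of $\pi_1$ to be a local isomorphism at $(a_1,\ldots,a_r)$---presence of a second
common eigenline, or non-reducedness of the fibre at the unique one---produce the same structural phenomenon: a common
$2$-dimensional invariant subspace on which the $a_i$ commute, i.e., membership in $T_2$. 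Once this dichotomy is unified,
the remaining deductions (proper plus quasi-finite is finite, reduced singleton fibres give closed immersion, surjection
onto a reduced target is an isomorphism) are formal.
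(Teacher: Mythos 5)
Your proof is correct. It shares the paper's overall strategy---deduce smoothness of $X_1\setminus T_2$ from smoothness of the incidence variety over $\P^{s-1}$, whose fibres are (open subsets of) affine spaces---but the pivotal step, namely that the incidence correspondence restricts to an isomorphism over $X_1\setminus T_2$, is established by a different mechanism. The paper introduces the kernel bundle $N=\bigcap_{f\in C}\Ker(L_f)$ attached to the commutator ideal $C$, shows in Lemma~\ref{lem.invariant} that its nullity strata are exactly $X_1$ and $T_2$, and then obtains the isomorphism for free because $N$ restricts to a line bundle on $X_1-T_2$ and the projectivization of a line bundle is the base. You instead prove the isomorphism directly by a fibre analysis of $\pi_1\colon Y_1\to X_1$: set-theoretic injectivity off $T_2$, reducedness of the fibres via a Zariski tangent space computation, and the formal chain (proper plus quasi-finite implies finite, finite with reduced singleton fibres implies closed immersion, surjective closed immersion onto a reduced target implies isomorphism). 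The two arguments rest on the same structural dichotomy---a second common eigenline, or a nonzero fibre tangent vector, each yields a two-dimensional invariant subspace on which the $a_i$ commute, forcing membership in $T_2$---but yours avoids constructing $N$ (and the implicit claim that the kernel of a vector-bundle map is a vector bundle on its constant-rank stratum), at the price of the scheme-theoretic fibre computation. Both are complete; note only that your assertion that $Y_1$ is Zariski-locally trivial over $\P^{s-1}$ is a slight (true and easy) strengthening of what is literally stated in the proof of Proposition~\ref{prop.Z}(a), and it is exactly what guarantees the smoothness of $Y_1$ that your reduction requires.
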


\begin{proposition}\label{prop.XcapY}
Fix an invertible $(s-1) \times (s-1)$ matrix $a$ such that none of the standard basis vectors
of $k^{s-1}$ is an eigenvector for $a$. Fix
$(s-1)$ distinct elements $\lambda_2, \dots, \lambda_s \in k^\times$.
Let $Y$ be the affine subspace of  $V_r = \Mat_s^r$  consisting of $r$-tuples of the form 
\[ \left(\begin{bmatrix}
 0 & 0 & \dots & 0 \\ 0 & & & \\ \vdots & & a & \\ 0 & & &
\end{bmatrix},
\begin{bmatrix}
 0 & 0 & \dots & 0 \\ x_{22} & \lambda_2 & & 0 \\ \vdots & & \ddots & \\ x_{2s} & 0 & & \lambda_s
\end{bmatrix},
\begin{bmatrix}
 0 & 0 & \dots & 0 \\ x_{32} & \lambda_2 & & 0 \\ \vdots & & \ddots & \\ x_{3s} & 0 & & \lambda_s
\end{bmatrix}, \dots, \begin{bmatrix}
 0 & 0 & \dots & 0 \\ x_{r2} & \lambda_2 & & 0 \\ \vdots & & \ddots & \\ x_{rs} & 0 & & \lambda_s
\end{bmatrix} \right).
\]
Then $Y \cap T_2 = \emptyset$, and $Y$ intersects $X_1$ transversely in $V_r = \Mat_s^r$.
\end{proposition}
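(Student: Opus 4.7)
The plan is to prove both claims by a direct analysis of common invariant subspaces of tuples in $Y$, followed by a tangent-space computation at the unique intersection point. The key structural observation is that every matrix in a tuple of $Y$ has vanishing first row, so it carries $k^s$ into the hyperplane $H = \{v_1 = 0\}$.

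First I would show $Y \cap T_2 = \emptyset$. Suppose $(M_1, \ldots, M_r) \in Y$ admits a common invariant subspace $W$ of dimension $\geqslant 2$ on which the $M_j$ pairwise commute. Setting $W' = W \cap H$, the inclusion $M_j(W) \subseteq W \cap H = W'$ makes $W'$ itself common invariant. If $\dim W' \geqslant 2$, write $W' = \{0\} \times U$ with $U \subseteq k^{s-1}$; the first-column contributions of the $x_{ij}$ vanish on $W'$, so $M_1|_{W'}$ acts as $a|_U$ and each $M_j|_{W'}$ ($j \geqslant 2$) acts as $D|_U$, where $D = \operatorname{diag}(\lambda_2, \ldots, \lambda_s)$. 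Since $D$ has distinct eigenvalues, $D$-invariance forces $U$ to be spanned by some standard basis vectors of $k^{s-1}$, and commutation of $a|_U$ with $D|_U$ then forces each of these basis vectors to be an $a$-eigenvector, contradicting the hypothesis on $a$. If instead $\dim W' = 1$, its spanning vector $(0, w')$ must be a simultaneous eigenvector of $a$ and $D$, again impossible by the hypothesis.

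Next I would identify $Y \cap X_1$ set-theoretically. If $(M_1, \ldots, M_r) \in Y \cap X_1$ has common eigenvector $v = (v_1, v')$ with eigenvalues $\mu_j$, then the first coordinate of $M_j v$ forces $\mu_j v_1 = 0$. When $v_1 \neq 0$, all $\mu_j = 0$, so $a v' = 0$ (hence $v' = 0$, as $a$ is invertible) and $v_1 x_{j\bullet} = 0$ (hence $x_{j\bullet} = 0$ for every $j \geqslant 2$), placing the intersection at the origin $0 \in Y$ with eigenvector $e_1$. When $v_1 = 0$, $v'$ is a simultaneous eigenvector of $a$ and $D$, hence a standard basis vector of $k^{s-1}$ that is also an $a$-eigenvector, a contradiction. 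Therefore $Y \cap X_1 = \{0\}$ as a set.

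By Proposition~\ref{prop.Zsmooth2} and the first step, $X_1$ is smooth at $0$ of codimension $(r-1)(s-1)$ in $V_r$. To verify transversality at $0$, write a tangent vector as $\dot M_j = \begin{pmatrix} \alpha_j & \beta_j \\ \gamma_j & \Delta_j \end{pmatrix}$ in the block form induced by $k^s = k \oplus k^{s-1}$, and compute $T_0 X_1$ as the image of the tangent map from the incidence variety of pairs $(M, [v])$: linearizing $M_j(e_1 + \epsilon u) \equiv \dot\mu_j(e_1 + \epsilon u) \pmod{\epsilon^2}$ for $u = (0, u')$ yields $\gamma_1 = -a u'$ and $\gamma_j = -D u'$ for $j \geqslant 2$. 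Eliminating $u' = -a^{-1}\gamma_1$ leaves the $(r-1)(s-1)$ linear relations $\gamma_j = D a^{-1}\gamma_1$ cutting out $T_0 X_1$. Meanwhile, $T_0 Y$ consists of tangent vectors with $\dot M_1 = 0$ and each $\dot M_j$ ($j \geqslant 2$) supported only in its $\gamma_j$-entries. The condition $\dot M_1 = 0$ forces $\gamma_1 = 0$, whence $\gamma_j = 0$ for every $j \geqslant 2$, so $T_0 Y \cap T_0 X_1 = 0$. A dimension count then gives $T_0 Y + T_0 X_1 = T_0 V_r$, establishing transversality. The main obstacle is this tangent-space identification, which rests on the claim that the projection from the incidence variety to $X_1$ is a local isomorphism at $(0, [e_1])$—justified by the fact that the fibre over $0$ reduces to the single point $[e_1]$, combined with the equality $\dim Y_1 = \dim X_1$ and the smoothness of $X_1$ at $0$.
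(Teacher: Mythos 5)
Your proposal is correct, and the transversality step takes a genuinely different route from the paper. The set-theoretic identification $Y\cap X_1=\{p\}$ is the same eigenvector analysis the paper uses, and your direct argument that $Y\cap T_2=\emptyset$ (via $W'=W\cap\{v_1=0\}$ and the distinct eigenvalues of $\diag{\lambda_2,\dots,\lambda_s}$) is a mild strengthening of the paper's, which only checks $p\notin T_2$ after first pinning down $Y\cap X_1$. The real divergence is in proving transversality: the paper shows that the \emph{scheme-theoretic} intersection $X_1\cap Y$ is a single reduced point by an argument over an arbitrary local ring $(R,\mathfrak m)$, which yields $T_p X_1\cap T_p Y=0$ for free, and then concludes by the dimension count. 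You instead compute $T_pX_1$ explicitly by linearizing the common-eigenvector condition on the incidence variety, obtaining the $(r-1)(s-1)$ relations $\gamma_j=Da^{-1}\gamma_1$, and intersect with $T_pY$ directly. Your route gives more information (an explicit description of $T_pX_1$) at the cost of one extra identification; the paper's route avoids that identification but requires the local-ring computation. The one step you should tighten is the claim that $T_pX_1$ \emph{equals} the image of the tangent space of the incidence variety: as stated, ``the fibre over $p$ is a single point plus $\dim Y_1=\dim X_1$ plus smoothness'' does not by itself give a local isomorphism. The clean fix is either to quote the proof of Proposition~\ref{prop.Zsmooth2}, which shows $\PP(N_1)\to X_1-T_2$ is an isomorphism (so the differential of the projection is an isomorphism onto $T_pX_1$), or to argue that the image of the differential is always \emph{contained} in $T_pX_1$, has dimension $rs^2-(r-1)(s-1)$ by your computation, and $\dim T_pX_1=rs^2-(r-1)(s-1)$ by smoothness of $X_1$ at $p$, forcing equality. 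With that repair the argument is complete.
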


Note that in the definition of $Y$ the matrix $a$ and the distinct nonzero scalars $\lambda_2, \ldots, \lambda_s$ remain fixed, 
and each of $x_{22}, x_{23}, \ldots, x_{rs}$ varies over $k$. 
Thus $Y$ is isomorphic to the affine space $\A_k^{(s-1)(r-1)}$
linearly embedded into $V_r$. Here and subsequently
\[ \vec{e}_1 = \begin{bmatrix} 1 \\ 0 \\ \vdots \\ 0 \end{bmatrix}, \quad  \ldots, \quad \vec{e}_s
= \begin{bmatrix} 0 \\  \vdots \\ 0 \\ 1 \end{bmatrix}  \]
denote the standard basis vectors of $k^s$. 

\begin{remark} \label{rem:pXcapYneeds3}
  In the case of $s=2$, the statement of Proposition \ref{prop.XcapY} does not make sense, since there is no $1 \times
  1$ matrix $a$ such that none of the standard basis vectors of $k^1$ is an eigenvector of $a$. This is why we restrict
  to $s \ge 3$ in this section.
\end{remark}

Since Propositions~\ref{prop.Zsmooth2} and~\ref{prop.XcapY}
are geometric in nature, we will assume that $k$ is an algebraically closed field of characteristic $0$ for the remainder of this section.

\subsection*{Proof of Proposition~\ref{prop.Zsmooth2}}
Let $k \langle x_1, \dots, x_r \rangle$ denote the free unital associative $k$-algebra in $r$ non-commuting variables $x_1, \ldots, x_r$.
Let $C$ be the $2$-sided ideal generated by the commutators $[x_i, x_j]$.

Let $E = V_r \times k^s$ be the trivial vector bundle of rank $s$ over $V_r$. For every $f \in k \langle x_1, \ldots, x_r \rangle$, we define
an endomorphism $L_f \colon E \to E$ by $L_f(v) = f(a_1, \ldots, a_r) \cdot v$ over $(a_1, \ldots, a_r) \in V_r$. Let
\[ N = \bigcap_{f \in C} \, \Ker(L_f) \] 
be the intersection of the kernels of $L_f$, where $f$ ranges over $C$. It follows from the noetherian property of $E$ that
$N$ is, in fact, the kernel of the map 
\[ L = L_{f_1} \times \ldots \times L_{f_N} \colon E \to E \times_{V_r} E \times_{V_r} \ldots \times_{V_r} E \]
of vector bundles over $V_r$ for finitely many elements $f_1, \ldots, f_N \in C$. Let $(V_r)_{\nullity \geqslant i}$ (respectively, $(V_r)_{\nullity = i}$)
be the closed (respectively, locally closed) subvariety of $V_r$ where the nullity of $L$ is $\leqslant i$ (respectively, $= i$).
Then $N$ is a vector bundle of rank $i$ over $(V_r)_{\nullity = i}$. 

\begin{lemma} \label{lem.invariant}
Denote the fibre of $N$ over $a = (a_1, \ldots, a_r) \in V_r$ by $N_a$. Then 

\smallskip
(a) $N_a$ is invariant under $a_i$ for each $i = 1, \ldots, r$.

\smallskip
(b) $(V_r)_{\nullity \geqslant 1} = X_1$ and $(V_r)_{\nullity \geqslant 2} = T_2$.
\end{lemma}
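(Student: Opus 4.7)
The plan is to exploit that $C$ is the \emph{two-sided} ideal of $k\langle x_1,\dots,x_r\rangle$ generated by the commutators. Two consequences will be used repeatedly: (i) whenever $f\in C$, also $fx_i\in C$; and (ii) every element of $C$ is a finite sum of terms of the form $g\cdot[x_i,x_j]\cdot h$ with $g,h\in k\langle x_1,\dots,x_r\rangle$.

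For part (a), given $v\in N_a$ I just need to verify $f(a)\cdot(a_iv)=0$ for every $f\in C$. By (i), $fx_i\in C$, so $f(a)a_iv=(fx_i)(a)v=0$ by definition of $N_a$. Before attacking (b) it is convenient to record a corollary: since $[x_i,x_j]\in C$, every $v\in N_a$ satisfies $[a_i,a_j]v=0$, and combined with (a) this says that $N_a$ is a common invariant subspace of $a_1,\dots,a_r$ on which the $a_i$ commute pairwise.

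For $(V_r)_{\nullity\ge 1}=X_1$ I argue both inclusions. If $a\in X_1$ and $v$ spans a common $1$-dimensional invariant subspace with $a_iv=\lambda_iv$, an easy induction on monomial length gives $g(a)v=g(\lambda_1,\dots,\lambda_r)v$ for any monomial $g$, so in particular $[a_i,a_j]v=0$; by (ii) this extends to $f(a)v=0$ for every $f\in C$, hence $v\in N_a$. Conversely, if $N_a\ne 0$, the pairwise commuting operators $a_1|_{N_a},\dots,a_r|_{N_a}$ over the algebraically closed field $k$ admit a common eigenvector, whose span is a $1$-dimensional common invariant subspace of $k^s$. For $(V_r)_{\nullity\ge 2}=T_2$ the bookkeeping is similar: if $W$ witnesses $a\in T_2$, then for $v\in W$ and any $g,h$, the vector $h(a)v$ lies in $W$ by invariance, and then $[a_i,a_j]h(a)v=0$ by the commutation hypothesis on $W$; summing via (ii) shows $W\subseteq N_a$ and thus $\dim N_a\ge 2$. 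Conversely, if $\dim N_a\ge 2$, take $W=N_a$ itself, which satisfies both requirements by the corollary above.

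The argument is formal once one remembers the two-sided structure of $C$; the only non-formal ingredient is the common-eigenvector statement for pairwise commuting operators, which uses that $k$ is algebraically closed (an assumption already in force). I do not anticipate a serious obstacle.
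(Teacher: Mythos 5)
Your proof is correct and follows the same route as the paper's: part (a) via $fx_i\in C$, and part (b) via the observation that a common invariant subspace on which the $a_i$ commute pairwise lies in $N_a$ (which you derive explicitly from the two-sided generation of $C$), together with the common-eigenvector fact for commuting operators over the algebraically closed base field. The paper's proof is terser — it leaves the $C$-decomposition argument and the $T_2$ equality as immediate from the definitions — but the content is the same.
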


\begin{proof}
(a) Suppose $f \in C$. Then $f x_i \in C$ for every $i = 1, \ldots, r$. Consequently, 
if $v$ lies in $N_a$, then $L_f(a_i v) = L_{f x_i}(v) = 0$ for every $f \in C$. Thus $a_i v$ also lies in $N_a$.
 
(b) For the first equality, note that if a $1$-dimensional subspace $V$ of $k^s$ is invariant under $a_i$ for each $i = 1, \ldots, r$, then clearly the restrictions of $a_1, \ldots, a_r$
to $V$ commute pairwise. This shows that $X_1 \subset (V_r)_{\nullity \geqslant 1}$. To prove the opposite inclusion, suppose $a \in (V_r)_{\nullity \geqslant 1}$,
Then $\dim N_a \geqslant 1$ and the restrictions of $a_1, \ldots, a_r$ to $N_a$ commute pairwise. Since we are assuming that our base field $k$ is algebraically closed, $a_1, \ldots, a_r$ have a common eigenvector in $N_a$. Hence, $a \in X_1$. 

The second equality, $(V_r)_{\nullity \geqslant 2} = T_2$, follows directly from the definition of $T_2$.
\end{proof}

\begin{remark} \label{rm:codimT}
 It is easy to see that $T_2$ is, in fact, a proper closed subvariety of $X_2$; in particular, $\dim T_2 < \dim X_2$. 
\end{remark}

We are now ready to finish the proof of Proposition~\ref{prop.Zsmooth2}.
Let $N_1$ be the restriction of $N$ to 
\[ (V_r)_{\nullity  = 1} = (V_r)_{\nullity \geqslant 1} - (V_r)_{\nullity \geqslant 2} = X_1 - T_2. \]
Then $N_1$ is a vector bundle of rank $1$ over $(X_1 - T_2)$. Its projectivization $\mathbb{P}(N_1)$
is the incidence variety in $(X_1 - T_2) \times \PP^{s-1}$ consisting of pairs
\[ ((a_1, \dots, a_r), L), \]
where $L$ is a $1$-dimensional subspace of $k^s$ invariant under $(a_1, \ldots, a_r) \in (X_1 - T_2)$.
Since $N_1$ is a vector bundle of rank $1$ over $(X_1 - T_2)$, the natural projection $\mathbb{P}(N_1) \to (X_1 - T_2)$ is an isomorphism.

 It thus suffices to prove that $\bbP(N_1)$ is smooth. To prove that $\PP(N_1)$ is smooth, we will take advantage of the fact that the natural
 projection map $N \to \PP^{s-1}$ given by $\pi \colon ((a_1, \ldots, a_r), L) \mapsto L$ is
 $\PGL_s$-equivariant, 
 and $\PGL_s$ acts transitively on the target $\PP^{s-1}$. This tells us that $\pi$ is flat and the fibres of $\pi$ 
 over the $k$-points of $\PP^{s-1}$ are pairwise isomorphic.  
 Moreover, the fibre over any closed point $v \in \PP^{s-1}$ is an
 open subvariety of the affine space of all $r$-tuples of matrices having $v$ as an eigenspace. 
 Hence, it is smooth. Therefore, $\pi \colon \PP(N_1) \to \PP^{s-1}$ 
 is flat with smooth fibres. We conclude that $\pi$ is a smooth morphism, and consequently,
 $\PP(N_1)$ is smooth over $\Spec k$, as claimed.
\qed

\subsection*{Proof of Proposition~\ref{prop.XcapY}}
\begin{lemma} \label{lem.YmeetX1}
The scheme-theoretic intersection of the subvarieties $Y$ and $X_1$ of $V_r$ is a single reduced point
\[ p := \left(\begin{bmatrix}
 0& 0 & \dots & 0 \\ 0 & & & \\ \vdots & & a & \\ 0 & & &
\end{bmatrix},
\begin{bmatrix}
 0 & 0 & \dots & 0 \\ 0& \lambda_2 & & \\ \vdots & & \ddots & \\ 0 & & & \lambda_s
\end{bmatrix},
\begin{bmatrix}
 0 & 0 & \dots & 0 \\ 0 & \lambda_2 & & \\ \vdots & & \ddots & \\ 0 & & & \lambda_s
\end{bmatrix}, \dots, \begin{bmatrix}
 0 & 0 & \dots & 0 \\ 0 & \lambda_2 & & \\ \vdots & & \ddots & \\ 0 & & & \lambda_s
\end{bmatrix} \right).\]
\end{lemma}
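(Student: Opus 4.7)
The plan is to prove the lemma in two stages: first identify $Y \cap X_1$ set-theoretically as the single point $p$, then establish that this intersection is scheme-theoretically reduced by showing that the Zariski tangent space $T_p Y \cap T_p X_1$ inside $T_p V_r$ is zero.

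For the set-theoretic part, suppose $(A_1, \dots, A_r) \in Y \cap X_1$ and write a common eigenvector as $v = v_1 \vec{e}_1 + w$ with $w \in \Span(\vec{e}_2, \ldots, \vec{e}_s)$, so that $A_i v = \mu_i v$ for some scalars $\mu_i$. Because the first row of every matrix in $Y$ vanishes, the first coordinate of $A_i v$ is $0$, giving $\mu_i v_1 = 0$ for all $i$. If $v_1 \neq 0$, then every $\mu_i = 0$; the relation $A_1 v = 0$ forces $aw = 0$, and so $w = 0$ because $a$ is invertible; then the relations $A_i v = 0$ for $i \geq 2$ force the column $(x_{i2}, \ldots, x_{is})$ to vanish, giving $(A_1, \dots, A_r) = p$. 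If $v_1 = 0$, then $w$ is a nonzero common eigenvector of $a$ and $D = \diag{\lambda_2, \ldots, \lambda_s}$; since the $\lambda_j$ are distinct, the eigenvectors of $D$ in $\Span(\vec{e}_2, \ldots, \vec{e}_s)$ are exactly $\vec{e}_2, \ldots, \vec{e}_s$, contradicting the hypothesis on $a$.

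Next I would compute tangent spaces. The subspace $T_p Y$ consists of tuples $(\delta A_1, \dots, \delta A_r)$ with $\delta A_1 = 0$ and each $\delta A_i$ ($i \geq 2$) supported only in its lower-left $(s-1) \times 1$ column. To describe $T_p X_1$, I first verify $p \notin T_2$: any $2$-dimensional subspace $W \subseteq k^s$ invariant under all matrices of $p$ and on which they pairwise commute must, depending on whether $\vec{e}_1 \in W$, either produce a common eigenvector of $a$ and $D$ in $\Span(\vec{e}_2, \ldots, \vec{e}_s)$ (excluded by the hypothesis), or force $W$ to equal $\Span(\vec{e}_j, \vec{e}_{j'})$ for some $j, j'$, with $W$ moreover $a$-invariant and $[a,D]|_W = 0$; the commutator condition combined with $\lambda_j \neq \lambda_{j'}$ then forces $\vec{e}_j$ and $\vec{e}_{j'}$ themselves to be eigenvectors of $a$, again a contradiction. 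Hence Proposition~\ref{prop.Zsmooth2} makes $X_1$ smooth at $p$.

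With smoothness in hand, I would identify $T_p X_1$ using the incidence variety $\widetilde{X_1} \subset V_r \times \PP^{s-1}$ parametrizing pairs consisting of a tuple and a common eigenline. The projection $\pi \colon \widetilde{X_1} \to X_1$ is proper and birational, and its fibre over $p$ is the single point $(p, [\vec{e}_1])$, so $\pi$ is finite near $p$; because $X_1$ is normal at $p$, Zariski's main theorem makes $\pi$ a local isomorphism near $(p, [\vec{e}_1])$. Linearizing the common-eigenvector condition in the affine chart $v = \vec{e}_1 + \sum_{j \geq 2} t_j \vec{e}_j$ then identifies $T_p X_1$ with the set of tuples $(\delta A_i)$ for which there exists $\vec{t}' = (t_2, \ldots, t_s)$ with $\vec{\gamma}_i + D_i \vec{t}' = 0$, where $\vec{\gamma}_i$ denotes the lower-left column of $\delta A_i$, $D_1 = a$, and $D_i = D$ for $i \geq 2$. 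A tuple in $T_p Y \cap T_p X_1$ must satisfy $\delta A_1 = 0$, hence $\vec{\gamma}_1 = 0$, which forces $a \vec{t}' = 0$ and so $\vec{t}' = 0$ by invertibility of $a$; then $\vec{\gamma}_i = 0$ for $i \geq 2$, and the other blocks of $\delta A_i$ already vanish in $T_p Y$, giving $\delta A_i = 0$ for every $i$. The main obstacle is the verification that $p \notin T_2$; everything else reduces to direct linear computations.
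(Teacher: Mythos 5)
Your set-theoretic identification of $Y\cap X_1$ with $\{p\}$ is essentially the paper's argument (the paper lists the eigenvectors of $a_1$ and of the $a_i$, $i\geqslant 2$, and intersects; your case split on $v_1\neq 0$ versus $v_1=0$ amounts to the same computation). For reducedness you take a genuinely different route. The paper argues on the functor of points: for a local $k$-algebra $(R,\mathfrak m)$ it shows any $R$-point of $X_1\cap Y$ is the constant point $p$, first reducing mod $\mathfrak m$ to pin the common eigenvector to a unit multiple of $\vec e_1$ and then using invertibility of $a$ to kill the $x_{ij}$. You instead prove $T_pX_1\cap T_pY=0$ and conclude by Nakayama that the artinian local ring of $X_1\cap Y$ at $p$ is $k$. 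Your route needs more input up front --- the verification that $p\notin T_2$, smoothness of $X_1$ at $p$ via Proposition~\ref{prop.Zsmooth2}, and the identification of $T_pX_1$ through the incidence variety (here you could simply cite the isomorphism $\PP(N_1)\iso X_1-T_2$ established in the proof of Proposition~\ref{prop.Zsmooth2} instead of re-deriving it from Zariski's main theorem) --- but it pays for itself: $T_pX_1\cap T_pY=0$ together with $\dim X_1+\dim Y=\dim V_r$ is precisely the transversality assertion of Proposition~\ref{prop.XcapY}, so you get the lemma and the transversality statement in one pass, whereas the paper deduces transversality afterwards from reducedness plus the dimension count. Two small points to tidy: in your $p\notin T_2$ check the invariant subspace $W$ may have dimension greater than $2$ (the same argument applies, since $W\cap\Span_k(\vec e_2,\dots,\vec e_s)$ is nonzero and $D$ restricted to it still has distinct eigenvalues); and the identification of $T_pX_1$ with the projected linearization should be justified by noting that the linearized incidence conditions cut out a space of dimension exactly $\dim X_1=rs^2-(r-1)(s-1)$, so they compute the Zariski tangent space on the nose rather than merely containing it.
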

\begin{proof}
We claim that $p$ is the only geometric point in the intersection $X_1 \cap Y$. To see this, let $L$ be an algebraically
closed field and let $(a_1, \dots, a_r) \in (X_1 \cap Y)(L)$, where
\[ a_1 = \begin{bmatrix}
 0 & 0 & \dots & 0 \\ 0 & & & \\ \vdots & & a & \\ 0 & & &
\end{bmatrix} \quad \text{and} \quad
 a_i =
\begin{bmatrix}
 0 & 0 & \dots & 0 \\ x_{i2} & \lambda_2 & & 0 \\ \vdots & & \ddots & \\ x_{is} & 0 & & \lambda_s
\end{bmatrix} \]
for $i = 2, \ldots, r$. Here $x_{ij}$ are elements of $L$.
The eigenvectors of $a_1$
are nonzero multiples of $\vec{e}_1$ and $\begin{bmatrix} 0 \\ v_2 \\ \vdots \\ v_s \end{bmatrix}$ ,
where $\begin{bmatrix} v_2 \\ \vdots \\ v_s  \end{bmatrix}$ is an eigenvector of the invertible matrix $a$.
The eigenvectors of $a_i$ for $i \ge 2$ are nonzero multiples of $\vec{e}_2, \ldots, \vec{e}_s$ 
and of $\begin{bmatrix} 1 \\ -\lambda_2^{-1}x_{i2} \\ \vdots \\
-\lambda_s^{-1}x_{is} \end{bmatrix}$. We conclude that
the matrices $a_1, a_2, \dots a_r$ do not have a common eigenvector unless $x_{22} = x_{23} = \dots = x_{ns} = 0$.
In the case where $x_{22} = x_{23} = \ldots = x_{ns} = 0$, the only
eigenspace shared by $a_1, \ldots, a_r$ is $\Span( \vec{e}_1 )$. 
This proves the claim. 

It remains to show that the scheme-theoretic intersection $X_1\cap Y$ is reduced, i.e., that it is isomorphic to $\Spec k$. 
Let $(R, \mathfrak m)$ denote a local ring containing $k$. We wish to show that there is a unique morphism $f: \Spec R \to X_1 \cap Y$ 
given by composing $p \colon \Spec k \to X_1 \cap Y$ with the natural projection $\Spec R \to \Spec k$.
Write $L$ for the algebraic closure of $R/\mathfrak m$. As we showed above, the composite $\Spec L \to \Spec R \to X_1 \cap Y$
must be the geometric point $p$. Therefore, $f: \Spec R \to X_1 \cap Y$ must correspond to an $R$-valued point of the form
\[ q=\left(\begin{bmatrix}
 0 & 0 & \dots & 0 \\ 0 & & & \\ \vdots & & a & \\ 0 & & &
\end{bmatrix},
\begin{bmatrix}
 0 & 0 & \dots & 0 \\ x_{22} & \lambda_2 & & 0 \\ \vdots & & \ddots & \\ x_{2s} & 0 & & \lambda_s
\end{bmatrix},
\ldots, 
\begin{bmatrix}
 0 & 0 & \dots & 0 \\ x_{r2} & \lambda_2 & & 0 \\ \vdots & & \ddots & \\ x_{rs} & 0 & & \lambda_s
\end{bmatrix} \right)\]
where each $x_{ij}$ lies in $\mathfrak m$.
The condition that $q$ lies in $X_1(R)$ means that
there exists a unimodular vector $\vec v := \begin{bmatrix} v_1 \\ \vdots \\ v_s \end{bmatrix} \in R^s$ such that 
$a_1 \vec v= \mu_1 \vec v,  \ldots, a_r \vec{v} = \mu_r \vec{v}$
for some $\mu_1, \ldots, \mu_r \in R$. If we reduce modulo $\mathfrak m$, then as we noted in the previous paragraph, 
the only eigenspace shared by $a_1, \ldots, a_r$ is $\Span(\vec{e}_1)$. In other words, if we reduce modulo $\mathfrak m$,
the vector $\vec v$ will become a scalar multiple of $\vec{e}_1$. 
Therefore $v_1 \in R^\times$, while $v_2, \dots, v_s \in \mathfrak m$. The condition that $\vec v$ is an
eigenvector of $a_1$ gives
\[  \mu_1  \begin{bmatrix} v_1 \\ v_2 \\ \vdots \\ v_s \end{bmatrix} =  \begin{bmatrix}
 0 & 0 & \dots & 0 \\ 0 & & & \\ \vdots & & a & \\ 0 & & &
\end{bmatrix} \begin{bmatrix} v_1 \\ v_2 \\ \vdots \\ v_s \end{bmatrix} = \begin{bmatrix} 0 \\ w_2 \\ \vdots  \\ w_s \end{bmatrix},
\text{ where } \begin{bmatrix}  w_2 \\ \vdots  \\ w_s \end{bmatrix}
= a \begin{bmatrix} v_2 \\ \vdots\\  v_s\end{bmatrix}.\]
Because $v_1$ is a unit, $\mu_1 = 0$, so that $\vec w = 0$. Since $a$ is an invertible $(s-1) \times (s-1)$-matrix, it follows that $v_2 = \dots =
v_s = 0$. That is, we have deduced that  $\vec{e}_1$ is a unit multiple of $\vec v$.

We may now suppose $\vec{e}_1$ is an eigenvalue of each matrix. As in the field case, looking at the first entry of $a_i \vec e_1 = \lambda_i
\vec e_1$ for
$i=1,\dots, r$ implies that the associated eigenvalues are $0$. Therefore $x_{ij} = 0$ for all $i = 2, \ldots, r$ and $j = 2, \ldots, s$.
This proves that $q$ is uniquely determined, so that $X_1 \cap Y$ is reduced, as claimed.
\end{proof}

\smallskip
We are now ready to complete the proof of Proposition~\ref{prop.XcapY}. To show that $Y \cap T_2 = \emptyset$, observe that 
$T_2 \subset X_1$; see Lemma~\ref{lem.invariant}(b). By Lemma~\ref{lem.YmeetX1}, $Y \cap X_1$ is a single point $p = (a_1, a_2, \ldots, a_r)$,
where 
\[ a_1 = \begin{bmatrix}
 0& 0 & \dots & 0 \\ 0 & & & \\ \vdots & & a & \\ 0 & & &
\end{bmatrix} \quad \text{and} \quad
a_2 = \ldots = a_r = \begin{bmatrix}
 0 & 0 & \dots & 0 \\ 0& \lambda_2 & & \\ \vdots & & \ddots & \\ 0 & & & \lambda_s
\end{bmatrix}. \]
Hence, it suffices to show that
$p \not \in T_2$.  Assume the contrary: there exists a subspace $V \subset k^s$ of dimension $\geqslant 2$ which is invariant under
$a_1$ and $a_2$ and such that 
$a_1$ and $a_2$ commute when restricted to $V$. Note that $\Span_k(\vec{e}_2, \ldots, \vec{e}_s)$ is invariant under both $a_1$ and $a_2$.
Hence so is $V' =  V \cap \Span_k(\vec{e}_2, \ldots, \vec{e}_n)$. Since $\dim V \geqslant 2$, we conclude that $\dim V' \geqslant 1$.
Since $a_1|_{V'}$ and $a_2|_{V'}$ commute, $a_1$ and $a_2$ must therefore share an eigenvector in $V'$. But the only eigenvectors
$a_1$ and $a_2$ share in $k^s$ are scalar multiples of $\vec{e}_1$, a contradiction. We conclude that $p \not \in T_2$.

It remains to show that $X_1$ and $Y$ intersect transversely at $p$. Since $p \not \in T_2$, the variety $X_1$ is smooth at $p$ by
Proposition~\ref{prop.Zsmooth2}. Since $Y$ is an affine space, $Y$ is also smooth at $p$. By definition $Y$ is of dimension
$(s-1)(r-1)$, and by Proposition~\ref{prop.Z}(a), $X_1$ is of dimension $rn^2 - (s-1)(r-1)$.
Their intersection is a single reduced point $p$, by Lemma~\ref{lem.YmeetX1}. Since 
  \[ \dim T_p (V_r) = rn^2 = \dim T_p (X_1) +  \dim T_p (Y), \]
$X_1$ and $Y$ intersect transversely in $V_r$.
\qed

\section{Proof of Theorem~\ref{thm.azumaya}(b)}
\label{sect.azumaya(b)}

In this section, unless otherwise stated, $k$ denotes a subfield of $\CC$. We will show that the natural map
\[ \Hoh^{2(s-1)(r-1)}(B\PGL_s) \to \Hoh^{2(s-1)(r-1)}_{\PGL_s}(U_r)\]
is not injective.

Consider the $1$-parameter subgroup $\Gm \hookrightarrow \PGL_s$ given by $\lambda \mapsto \begin{bmatrix} \lambda^{-1} & 0 \\ 0 & I_{s-1} \end{bmatrix}$.
The group $\Gm$ acts on $\Mat_s$ by 
 \[ \lambda \cdot \begin{bmatrix}
  a_{11} & a_{12} & \dots & a_{1n} \\ a_{21} & a_{22} & \dots & a_{2n} \\ \vdots & \vdots & \ddots & \vdots \\ a_{n1} &
  a_{n2} & \dots & a_{nn}
 \end{bmatrix} = \begin{bmatrix}
  a_{11} & \lambda^{-1} a_{12} & \dots & \lambda^{-1} a_{1n} \\ \lambda a_{21} & a_{22} & \dots & a_{2n} \\ \vdots & \vdots & \ddots & \vdots \\ \lambda a_{n1} &
  a_{n2} & \dots & a_{nn}
 \end{bmatrix}\]
 Recall that the variety $Y$ from Proposition \ref{prop.XcapY} is isomorphic to $\A^{(s-1)(r-1)}$, the affine space with
 linear coordinates $(x_{22}, \dots, x_{rn})$. The variety $Y$ is $\Gm$-invariant as a subvariety of $V_r$, and the isomorphism $Y \iso
 \A^{(s-1)(r-1)}$ is $\Gm$-equivariant, where $\Gm$ acts on the linear coordinates by
 \begin{equation}
   \label{eq:1}
    \lambda \colon (x_{22}, \dots, x_{rn}) \mapsto (\lambda x_{22}, \dots, \lambda x_{rn}).
 \end{equation}

 The normal bundle $N$ of ${\Spec k} \iso X_1 \cap Y$ in
 $Y \iso \A^{(s-1)(r-1)}$ is trivial of rank $(s-1)(r-1)$, and the total space may be identified with $Y$ itself. From
 \eqref{eq:1}, $N$ is isomorphic to the direct sum of $(s-1)(r-1)$ copies of the standard representation of $\Gm$ on
 $\A^1$. The top Chern class of this representation, $c_{(s-1)(r-1)}^{\Gm}(N)$, is 
 $\theta^{(s-1)(r-1)} \in \Hoh^{2(s-1)(r-1)}_{\Gm}({\Spec k})$,
 where $\theta=c^{\Gm}_1(\mathbb{A}^1)$ is a generator of
 $\Hoh^2_{\Gm}(\Spec k)=\Hoh^2(B \Gm)\cong \ZZ$.

 \begin{remark} \label{not:identifications}
   We describe four identifications, which we use for the rest of the section.

   First, because $X_1 \cap Y = {\Spec k}$, we may identify $\Hoh^*_\Gm(X_1 \cap Y)$ with $\ZZ [\theta]$, where $|\theta|
   =2$.  \benw{There is a question of $\theta$ versus $-\theta$. Is it necessary to specify which generator $\theta$ is?}

  Second, we use the isomorphism $i^*: \Hoh_\Gm^*(Y) \to \Hoh_\Gm^*(X_1 \cap Y)$ to identify $\Hoh_\Gm^*(Y)$ with $\ZZ[\theta]$ as well.

  Third, because of the isomorphism $V_r \iso \A_k^{rs^2}$, we may also identify $\Hoh_\Gm^*(V_r)$ with $\ZZ[\theta]$.

  Finally, recall from Proposition~\ref{prop.Z}(a) that $X_2$ is a closed irreducible subvariety of $V_r$ of codimension $2(s-2)(r-1)$.
  By Remark~\ref{rm:codimT}, the codimension of $T_2$ in $V_r$ is at least $2(s-2)(r-1)+1$. We deduce (using e.g.,
  Lemma \ref{lem:localizationIsoEquiv}) that for values of $j< 4(s-2)(r-1)+1$, the natural inclusion $(V_r - T_2) \hookrightarrow V_r$ 
  induces an isomorphism
  \[ \Hoh_\Gm^{j}(V_r - T_2 ) \isomto \Hoh_\Gm^{j}(V_r).\]
  Using this, we identify $\Hoh_\Gm^j(V_r - T_2)$ with $\Hoh_\Gm^j(V_r)=\Hoh^j(B\Gm)$ for every $j \leqslant 4(s-2)(r-1)$.
\end{remark}

\begin{proposition} \label{pr:Pushforward1}
  The Gysin map
  $\displaystyle  j_*: \Hoh^0_{\Gm}(X_1
    \cap Y) \to \Hoh^{2(s-1)(r-1)}_{\Gm}(Y)$
  on $\Gm$-equivariant cohomology, induced by the inclusion $(X_1 \cap Y) \hookrightarrow Y$, satisfies $j_*(1)=\theta^{(s-1)(r-1)}$.
\end{proposition}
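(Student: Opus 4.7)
The plan is to recognize the pushforward as a self-intersection computation that reduces to the equivariant Euler class of the normal bundle. Since $X_1 \cap Y$ is the single reduced point at the origin of $Y \cong \mathbb{A}^{(s-1)(r-1)}$, and since $\Gm$ acts linearly on $Y$ via~\eqref{eq:1}, the inclusion $j \colon X_1 \cap Y \hookrightarrow Y$ is a $\Gm$-equivariant regular closed embedding whose normal bundle $N$ is canonically identified with the tangent representation of $Y$ at the origin, namely the direct sum of $(s-1)(r-1)$ copies of the standard $\Gm$-representation on $\mathbb{A}^1$.

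First I would invoke the equivariant self-intersection formula: for a regular closed embedding $j$ of codimension $c$, the composition $j^* \circ j_*$ on $\Hoh^0$ is multiplication by the equivariant Euler (top Chern) class of the normal bundle, so
\[
j^* j_*(1) = c^{\Gm}_{(s-1)(r-1)}(N) \in \Hoh^{2(s-1)(r-1)}_{\Gm}(X_1 \cap Y).
\]
Second, using that $N$ splits $\Gm$-equivariantly as $(s-1)(r-1)$ copies of the standard representation, Whitney sum gives
\[
c^{\Gm}_{(s-1)(r-1)}(N) = c^{\Gm}_1(\mathbb{A}^1)^{(s-1)(r-1)} = \theta^{(s-1)(r-1)}.
\]

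The final step is to transport this equality from $\Hoh^{2(s-1)(r-1)}_{\Gm}(X_1 \cap Y)$ to $\Hoh^{2(s-1)(r-1)}_{\Gm}(Y)$ under the identification $i^* = j^*$ of Remark~\ref{not:identifications}. Since $Y$ is a $\Gm$-equivariant vector bundle over the point $X_1 \cap Y$, homotopy invariance of Borel equivariant cohomology makes $j^* \colon \Hoh^*_{\Gm}(Y) \to \Hoh^*_{\Gm}(X_1 \cap Y)$ an isomorphism, so the formula $j^* j_*(1) = \theta^{(s-1)(r-1)}$ is equivalent to $j_*(1) = \theta^{(s-1)(r-1)}$ under the identification chosen in Remark~\ref{not:identifications}.

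The only substantive thing to verify is the self-intersection formula in the equivariant setting; this is standard and follows from the Thom isomorphism applied to the $\Gm$-equivariant normal bundle $N \to X_1 \cap Y$, combined with the projection formula. All other ingredients (the explicit description of the $\Gm$-action on $Y$, the identification of $N$ with the standard representation of weight $1$ summed $(s-1)(r-1)$ times, and the fact that $j^*$ is an isomorphism) are already recorded in Remark~\ref{not:identifications} and the paragraph preceding the proposition.
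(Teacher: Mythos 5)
Your proposal is correct and is essentially the paper's argument: both reduce $j_*(1)$ to the $\Gm$-equivariant Euler class of the normal bundle of the point in $Y$, identify that bundle with $(s-1)(r-1)$ copies of the standard representation, and conclude by multiplicativity of Euler classes, using that $j^*$ is an isomorphism to transport the answer. The only difference is presentational: where you invoke the equivariant self-intersection formula abstractly, the paper instantiates it concretely via the finite-dimensional Borel model, realizing the map as the zero-section $\P^N_k \hookrightarrow \sh O(1)^{\oplus (s-1)(r-1)}$ and applying Remark~\ref{rem:chern}.
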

\begin{proof}
  The inclusion $(X_1 \cap Y) \hookrightarrow Y$ is isomorphic as a map of varieties with $\Gm$-action to the inclusion $\{0\} \to
  \A^{(s-1)(r-1)}_k$, where the affine space is given the standard $\Gm$-action.

  In order to calculate the $\Gm$-equivariant cohomology groups of the spaces concerned, we compare the map with
  \[\{0\} \times (\A^{N+1}_k - \{0 \}) \to \A^{(s-1)(r-1)}_k \times (\A^{N+1} - \{0\})\]
  where $N > (s-1)(r-1)$ and the $\Gm$ action on $\A_k^{N+1}$ is the standard one. Now we take quotients by the $\Gm$-action
  to arrive at 
  \[ z: \P^{N}_k \to \P^{N+(s-1)(r-1)}_k - \P^{(s-1)(r-1)-1}_k,\]
  which is the inclusion of $\P^N_k$ as the zero-section of the bundle $E=\sh O(1)^{(s-1)(r-1)}$.

  The Gysin map $j_*$ above fits in a commutative diagram:
  \[ \xymatrix{ \Hoh^0_\Gm(X_1 \cap Y) \ar^{j_*}[r] \ar^=[d] & \Hoh^{2(s-1)(r-1)}_\Gm(Y) \ar^=[d] \ar^=[dr]\\
      \Hoh^0(\P^N_k) \ar^{z_* \quad \quad}[r] & \Hoh^{2(s-1)(r-1)}(E) \ar_=^{z^*}[r] & \Hoh^{2(s-1)(r-1)}(\P^N_k).} \]
  The arrows marked ``$=$'' represent identifications. 

  The composite $z^* \circ z_*$ sends $1$ to the Euler class of $E$ (Remark \ref{rem:chern}), which is
  $\theta^{(s-1)(r-1)}$ by the multiplicativity of Euler classes \cite[p.~156]{Milnor1974}.
\end{proof}

\begin{lemma} \label{pr:Pushforward2}
  The Gysin map
  \[ \Hoh^*_{\Gm}(X_1 - T_2) \to \Hoh^{* + 2(s-1)(r-1)}_{\Gm}(V_r - T_2) \] on $\Gm$-equivariant
  cohomology, induced by the inclusion $(X_1 - T_2) \hookrightarrow (V_r - T_2)$, sends $1_{X_1 - T_2}$ to $\theta^{(s-1)(r-1)}$. In particular,
  $\Hoh^0_{\Gm}(X_1 - T_2) \to \Hoh^{2(s-1)(r-1)}_{\Gm}(V_r - T_2)$ is nonzero.
\end{lemma}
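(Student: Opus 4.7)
The plan is to reduce this push-forward computation on $V_r - T_2$ to the analogous computation on $Y$ handled in Proposition~\ref{pr:Pushforward1}, by exploiting the transverse intersection $Y \cap X_1$ provided by Proposition~\ref{prop.XcapY}.

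First I set up the geometry. By Proposition~\ref{prop.Zsmooth2}, $X_1$ is smooth away from $T_2$, so $j \colon X_1 - T_2 \hookrightarrow V_r - T_2$ is a closed immersion of smooth $\Gm$-varieties of complex codimension $(s-1)(r-1)$ by Proposition~\ref{prop.Z}(a). Since $Y \cap T_2 = \emptyset$ by Proposition~\ref{prop.XcapY}, the inclusion $g \colon Y \hookrightarrow V_r$ factors through $V_r - T_2$, and by Lemma~\ref{lem.YmeetX1} we have $Y \cap (X_1 - T_2) = X_1 \cap Y = \Spec k$. Moreover, Proposition~\ref{prop.XcapY} guarantees that this intersection is transverse inside $V_r$, and hence also inside $V_r - T_2$.

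Next I invoke the base-change formula for the Gysin map along transverse Cartesian squares. Applied to
\[
\begin{tikzcd}
X_1 \cap Y \arrow[r, "j'"] \arrow[d, "f"'] & Y \arrow[d, "g"] \\
X_1 - T_2 \arrow[r, "j"] & V_r - T_2,
\end{tikzcd}
\]
this gives the identity $g^* \circ j_* = j'_* \circ f^*$ in $\Gm$-equivariant cohomology. Evaluating on the unit class $1_{X_1 - T_2}$ and using $f^*(1_{X_1 - T_2}) = 1_{X_1 \cap Y}$ together with Proposition~\ref{pr:Pushforward1} yields
\[
g^*\!\bigl(j_*(1_{X_1 - T_2})\bigr) \,=\, j'_*(1_{X_1 \cap Y}) \,=\, \theta^{(s-1)(r-1)}.
\]
Under the identifications of Remark~\ref{not:identifications} in degree $2(s-1)(r-1)$ (which is well below the threshold $4(s-2)(r-1)+1$ for $s\ge 3$), both $\Hoh_{\Gm}^{2(s-1)(r-1)}(V_r - T_2)$ and $\Hoh_{\Gm}^{2(s-1)(r-1)}(Y)$ are identified with $\Z \cdot \theta^{(s-1)(r-1)}$ via pullback along the structural morphism to $\Spec k$. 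Because $g$ is a $\Gm$-equivariant map over $\Spec k$, the restriction $g^*$ intertwines these two identifications and is therefore the identity on $\Z\cdot \theta^{(s-1)(r-1)}$. The displayed equation then forces $j_*(1_{X_1 - T_2}) = \theta^{(s-1)(r-1)}$, proving the lemma. The nonvanishing claim is then immediate.

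The main technical point is the base-change formula for equivariant Gysin maps along a transverse square. One way to establish it is to model $\Hoh^*_{\Gm}$ by the Borel construction applied to a finite-dimensional approximation $E_N\Gm \to B_N\Gm$; transversality of $X_1 - T_2$ with $Y$ in $V_r - T_2$ is preserved upon taking the product with $E_N\Gm$ and quotienting by $\Gm$, so the classical base-change formula for Gysin maps of smooth manifolds (which in turn follows from compatibility of Thom classes with pullback of the normal bundle) applies to the finite-dimensional approximations and passes to the limit.
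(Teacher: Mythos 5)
Your proposal is correct and follows essentially the same route as the paper: the same transverse square from Proposition~\ref{prop.XcapY}, the same base-change commutativity for equivariant Gysin maps (the paper's Lemma~\ref{lem:GysinTransversality}, proved there exactly by the finite-dimensional approximation argument you sketch), the input from Proposition~\ref{pr:Pushforward1}, and the same observation that restriction $\Hoh^{2(s-1)(r-1)}_{\Gm}(V_r - T_2) \to \Hoh^{2(s-1)(r-1)}_{\Gm}(Y)$ is an isomorphism compatible with the identifications of Remark~\ref{not:identifications} (using $s \ge 3$ for the codimension bound). No gaps.
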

\begin{proof}
  Consider a square in which the vertical maps are Gysin maps associated to closed embeddings, and in which
  the horizontal maps are induced by the inclusions $Y \cap X_1 \to X_1 \sm T_2$ and $Y \to V_r \sm T_2$:
 \begin{equation*}
   \xymatrix{\ZZ[\theta] \iso \Hoh_{\Gm}^{0}(Y \cap X_1)\ar[d] & \Hoh_{\Gm}^{0}(X_1 - T_2) \iso \ZZ[\theta] \ar[d] \ar[l]
     \\ \ZZ[\theta] \iso \Hoh_{\Gm}^{2(s-1)(r-1)}(Y) &
     \Hoh_{\Gm}^{2(s-1)(r-1)}(V_r - T_2) \ar[l] }
\end{equation*}
This square commutes by Lemma \ref{lem:GysinTransversality}, which applies because $Y$ and $X_1 - T_2$ intersect transversely in
$V_r\sm T_2$ according to Proposition~\ref{prop.XcapY}.

Lemma \ref{pr:Pushforward1} says the left vertical map sends $1_{Y \cap X_1}$ to $\theta^{(s-1)(r-1)}$. The upper
horizontal map is an isomorphism since the varieties in the source and target are connected. Consider the maps induced
by inclusions
  \[
    \begin{tikzcd}
      \Hoh^j_\Gm(V_r) \rar \arrow[bend right]{rr}& \Hoh^j_\Gm(V_r \sm T_2) \rar & \Hoh^j_\Gm(Y).
    \end{tikzcd}
  \]
  Here Remark \ref{not:identifications} tells us that the left arrow is an isomorphism provided $j < 4(s-2)(r-1)+1$ and
  the curved arrow is an isomorphism for all $j$. We deduce that inclusion induces an isomorphism
  $\Hoh^{2(s-1)(r-1)}_\Gm(V_r \sm T_2)\isomto \Hoh^{2(s-1)(r-1)}_\Gm(Y)$.

  The assertion of the lemma is now proved by chasing the diagram.
\end{proof}

\begin{lemma} \label{pr:Pushforward3} The Gysin map
  \[ \Hoh^0_{\PGL_n}(X_1 - T_2) \to \Hoh^{2(s-1)(r-1)}_{\PGL_n}(V_r - T_2) \] on $\PGL_n$-equivariant
  cohomology, induced by the inclusion $(X_1 - T_2) \hookrightarrow (V_r - T_2)$, is nonzero.
\end{lemma}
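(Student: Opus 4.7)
The plan is to reduce the $\PGL_n$-equivariant assertion to the $\Gm$-equivariant assertion already established in Lemma~\ref{pr:Pushforward2}, using the fixed inclusion $\Gm \hookrightarrow \PGL_s$ from the start of this section. The closed embedding $j \colon (X_1 - T_2) \hookrightarrow (V_r - T_2)$ is $\PGL_s$-equivariant, and a fortiori $\Gm$-equivariant via this subgroup.

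First, I would invoke the naturality of Gysin maps under restriction of structure group: the inclusion $\Gm \hookrightarrow \PGL_s$ induces, for every $\PGL_s$-space $X$, a restriction map $\Hoh^{*}_{\PGL_s}(X) \to \Hoh^{*}_{\Gm}(X)$ (obtained from the map $B\Gm \to B\PGL_s$ on classifying spaces), and these restriction maps commute with Gysin maps associated to equivariant closed embeddings of smooth varieties. Applying this to $j$ yields a commutative square
\[
\begin{tikzcd}
\Hoh^{0}_{\PGL_s}(X_1 - T_2) \arrow{r}{j_*} \arrow{d}{r_1} & \Hoh^{2(s-1)(r-1)}_{\PGL_s}(V_r - T_2) \arrow{d}{r_2} \\
\Hoh^{0}_{\Gm}(X_1 - T_2) \arrow{r}{j_*} & \Hoh^{2(s-1)(r-1)}_{\Gm}(V_r - T_2).
\end{tikzcd}
\]
Since $X_1 - T_2$ is irreducible (being a dense open subvariety of the irreducible $X_1$ by Proposition~\ref{prop.Z}(a) and Remark~\ref{rm:codimT}), both $\Hoh^0$ groups are isomorphic to $\Z$ and the left-hand vertical map sends $1$ to $1$.

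Next, I would chase the element $1 \in \Hoh^{0}_{\PGL_s}(X_1 - T_2)$ around this diagram. Going down then right gives $j_*(1) = \theta^{(s-1)(r-1)}$ in $\Hoh^{2(s-1)(r-1)}_{\Gm}(V_r - T_2)$ by Lemma~\ref{pr:Pushforward2}, and this element is nonzero in view of the identification $\Hoh^{2(s-1)(r-1)}_{\Gm}(V_r - T_2) \cong \Z[\theta]$ from Remark~\ref{not:identifications}. By commutativity, this equals $r_2(j_*(1))$, so $j_*(1) \in \Hoh^{2(s-1)(r-1)}_{\PGL_s}(V_r - T_2)$ must itself be nonzero, which is precisely the desired conclusion.

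The one point that requires a little care is the naturality of Gysin maps with respect to the change-of-group map $\Gm \hookrightarrow \PGL_s$; this is a standard fact, following for example from the Totaro--Edidin--Graham approximation recalled in Section~\ref{s:equivCohoUr}, where equivariant cohomology is represented by ordinary cohomology of Borel constructions $(X \times U)/G$, and where the Gysin map is the usual one for the transverse (indeed smooth over the base) closed embedding $((X_1 - T_2) \times U)/G \hookrightarrow ((V_r - T_2) \times U)/G$. Restricting the structure group amounts to pulling back along a fibration of Borel constructions, under which Gysin maps are natural. Apart from this verification, the proof is essentially a one-line diagram chase.
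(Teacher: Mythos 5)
Your proposal is correct and follows essentially the same route as the paper: the paper also deduces the result by restricting along $\Gm \hookrightarrow \PGL_s$, invoking the compatibility of Gysin maps with change of group (its Lemma~\ref{lem:GysinSubgroup}, proved by the same Borel-construction transversality argument you sketch), and then chasing $1$ through the square using Lemma~\ref{pr:Pushforward2}.
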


\begin{proof}
  Lemma \ref{lem:GysinSubgroup} says there is a commutative diagram
 \begin{equation*}
 \xymatrix{\ZZ \iso \Hoh_{\Gm}^{0}(X_1 - T_2)\ar[d] & \Hoh_{\PGL_s}^0(X_1 - T_2) \iso \ZZ \ar[d] \ar[l] \\ \Hoh_{\Gm}^{2(s-1)(r-1)}(V_r - T_2) &
 \Hoh_{\PGL_s}^{2(s-1)(r-1)}(V_r - T_2) \ar[l]}
 \end{equation*}
 in which the upper horizontal arrow takes $1$ to $1$. Lemma \ref{pr:Pushforward2} and a diagram-chase now establishes the result.
\end{proof}

\begin{lemma} \label{pr:MorNotInj}
 The morphism $\Hoh_{\PGL_s}^{2(s-1)(r-1)}( V_r ) \to \Hoh_{\PGL_s}^{2(s-1)(r-1)}(U_r)$ induced by the open immersion $U_r\hookrightarrow V_r$ is not injective.
\end{lemma}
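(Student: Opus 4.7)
The plan is to assemble the pieces already established in Lemmas \ref{pr:Pushforward1}--\ref{pr:Pushforward3} into a localization/Gysin argument. The class constructed in Lemma \ref{pr:Pushforward3} lives in $\Hoh^{2(s-1)(r-1)}_{\PGL_s}(V_r - T_2)$ and comes from the Gysin pushforward along the closed embedding $X_1 - T_2 \hookrightarrow V_r - T_2$; the strategy is to show this class pulls back to zero on $U_r$ but is nonzero on $V_r$.

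First I would observe that $T_2 \subseteq X_2 \subseteq Z_r$ since any $r$-tuple with a common invariant subspace of dimension $\geq 2$ certainly fails to generate $A$ (when $s \geq 3$ it lies in $X_2$, and always in $X_1$). Consequently we have the chain of open inclusions
\[ U_r \;\subseteq\; V_r - X_1 \;\subseteq\; V_r - T_2 \;\subseteq\; V_r, \]
which yields a corresponding chain of restriction maps on equivariant cohomology. Next, from Remark \ref{not:identifications} (whose codimension bound uses $s \geq 3$ and $r \geq 2$), the restriction $\Hoh^{2(s-1)(r-1)}_{\PGL_s}(V_r) \to \Hoh^{2(s-1)(r-1)}_{\PGL_s}(V_r - T_2)$ is an isomorphism.

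Next I would invoke the Gysin long exact sequence for the closed embedding $X_1 - T_2 \hookrightarrow V_r - T_2$ with open complement $(V_r - T_2) - (X_1 - T_2) = V_r - X_1$:
\[ \cdots \to \Hoh^{0}_{\PGL_s}(X_1 - T_2) \xrightarrow{\, j_*\,} \Hoh^{2(s-1)(r-1)}_{\PGL_s}(V_r - T_2) \to \Hoh^{2(s-1)(r-1)}_{\PGL_s}(V_r - X_1) \to \cdots. \]
By Lemma \ref{pr:Pushforward3}, the map $j_*$ is nonzero, so there is a nonzero class $\alpha \in \Hoh^{2(s-1)(r-1)}_{\PGL_s}(V_r - T_2)$ in its image, and exactness forces $\alpha$ to restrict to zero in $\Hoh^{2(s-1)(r-1)}_{\PGL_s}(V_r - X_1)$. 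Lift $\alpha$ via the isomorphism of the preceding paragraph to a nonzero class $\tilde\alpha \in \Hoh^{2(s-1)(r-1)}_{\PGL_s}(V_r)$.

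Finally, the restriction $U_r \hookrightarrow V_r$ factors through $V_r - X_1$, so by functoriality the image of $\tilde\alpha$ in $\Hoh^{2(s-1)(r-1)}_{\PGL_s}(U_r)$ factors through the zero element in $\Hoh^{2(s-1)(r-1)}_{\PGL_s}(V_r - X_1)$, and hence vanishes. Thus $\tilde\alpha$ is a nonzero class in the kernel, and the map in question is not injective. The ``main obstacle''—producing a Gysin class that survives the passage from $\Gm$-equivariant to $\PGL_s$-equivariant cohomology—was handled in the preceding lemmas; the present statement is essentially a bookkeeping exercise about how the class dies after removing $X_1$ but does not die after only removing $T_2$, which is made possible by the codimension estimate $\codim_{V_r} T_2 > \codim_{V_r} X_1$.
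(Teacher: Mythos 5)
Your argument is correct and is essentially the paper's own proof: both factor $U_r\hookrightarrow V_r$ through $V_r - X_1 = V_r - (T_2\cup X_1)$, use Remark~\ref{not:identifications} to identify $\Hoh^{2(s-1)(r-1)}_{\PGL_s}(V_r)$ with $\Hoh^{2(s-1)(r-1)}_{\PGL_s}(V_r-T_2)$, and then combine Lemma~\ref{pr:Pushforward3} with the equivariant Thom--Gysin sequence for $X_1-T_2\hookrightarrow V_r-T_2$ to produce a nonzero class killed by restriction. No substantive differences.
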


\begin{proof}
 There is a factorization of the inclusion $U_r \hookrightarrow V_r$ as
 \begin{equation*}
 U_r \hookrightarrow V_r - (T_2 \cup X_1) \hookrightarrow V_r - T_2 \hookrightarrow V_r,
 \end{equation*}
 and each of these inclusions is $\PGL_s$-equivariant. By functoriality, it suffices to show
 \[\Hoh_{\PGL_s}^{2(s-1)(r-1)}(V_r )\to \Hoh_{\PGL_s}^{2(s-1)(r-1)}(V_r - (T_2 \cup X_1))\]
 is not injective. Clearly, since $T_2  \subsetneq X_1$, the
 codimension of $T_2$ in $V_r$ is strictly greater than the codimension of $X_1$, i.e., is strictly greater than
 $(r-1)(s-1)$; see Proposition~\ref{prop.Z}(a). By Remark~\ref{not:identifications},
 \[\Hoh_{\PGL_s}^{2(s-1)(r-1)}(V_r) \to \Hoh_{\PGL_s}^{2(s-1)(r-1)}(V_r - T_2) \] is an
 isomorphism. It is therefore sufficient to show that \[ \Hoh_{\PGL_s}^{2(s-1)(r-1)}(V_r - T_2) \to
 \Hoh_{\PGL_s}^{2(s-1)(r-1)}(V_r - (T_2 \cup X_1))\] is not injective. This follows from Proposition~\ref{pr:Pushforward3}
 in combination with the Thom--Gysin sequence 
 \begin{equation*}
 \Hoh^{0}_{\PGL_s}(X_1 - T_2) \to \Hoh^{2(s-1)(r-1)}_{\PGL_s}(V_r - T_2) \to
 \Hoh^{2(s-1)(r-1)}_{\PGL_s}(V_r - (T_2 \cup X_1)). \qedhere
 \end{equation*}
\end{proof}

\begin{remark} \label{rem:prMorNotInjNeeds3} 
(i)  Lemma \ref{pr:MorNotInj} can fail when $s=2$. For instance, if $s=2$ and $r=2$,
  then one can show that $\Hoh_{\PGL_2}^2 (V_2)  \iso \Hoh^2(B\mathrm{PU}_2) = 0$ by
  exploiting the isomorphism of Lie groups between $\SO_3$
  and the projective unitary group $\mathrm{PU}_2$ and
  \cite[Theorem 1.5]{Brown1982}. With some additional effort, we have checked by direct computation that the morphism 
  \[ \Hoh_{\PGL_2}^{2(r-1)}( V_r) \to \Hoh_{\PGL_2}^{2(r-1)}(U_r)\]
  is injective for a few other small even values of $r$. 
  \benw{It's actually a lot of effort}

\smallskip
(ii) The argument in Lemma \ref{pr:MorNotInj} actually shows the slightly stronger result that
  \[ \Hoh_{\PGL_s}^{2(s-1)(r-1)}( V_r ; \QQ ) \to \Hoh_{\PGL_s}^{2(s-1)(r-1)}(U_r; \QQ)\] is not injective, if $s \geqslant 3$.
  By contrast, when $s=2$ and $r$ is even, there is an isomorphism $\Hoh_{\PGL_2}^{2(r-1)}( V_r ; \QQ ) \iso \Hoh^{2(r-1)}(B \SO_3; \QQ)
  = 0$, using \cite[Theorem 1.5]{Brown1982}. 
  
\smallskip
Taken together, (i) and (ii) suggest that the case where $s=2$ and $r$ is even is substantially different from the case where $s \geqslant 3$ considered in~Lemma \ref{pr:MorNotInj}. On the other hand, Gant has shown that this lemma continues to hold when $s=2$ and $r \ge 3$ 
is odd; see \cite[Theorem 5.2]{Gant2020}.
\end{remark}

\begin{proof}[Proof of Theorem \ref{thm.azumaya}(b)]  First, let us suppose the field $k$ is $\QQ$. 
Set $r$ to be the largest integer satisfying $d \geqslant 2(s-1)(r-1)$. In other words, set
\begin{equation} \label{e.pf-azumaya} r = \Floor{ \frac{d}{2(s-1)} } + 1.
\end{equation}
We claim that the natural map $\Hoh^{2(s-1)(r-1)}(B \PGL_s) \to \Hoh^{2(s-1)(r-1)}_{\PGL_s}(U_r)$
  is not injective. This follows from the commutative diagram
  \[ \xymatrix{ & \Hoh^{2(s-1)(r-1)}(B \PGL_s) \ar_{\iso}[dl] \ar[dr] & \\ \Hoh_{\PGL_s}^{2(s-1)(r-1)}( V_r )
      \ar[rr] & &   \Hoh^{2(s-1)(r-1)}_{\PGL_s}(U_r) } \]
  along with Lemma \ref{pr:MorNotInj}.

  We now apply Theorem \ref{thm:produceCounterexample} with $i = 2(s-1)(r-1)$ 
  to produce a $\QQ$-ring $R'$ and an Azumaya $R'$-algebra $B'$ such that $\Kdim R' = i = 2(s-1)(r-1)$ and
  $\gen_{R'_{\CC}}(B'_{\CC}) \geqslant r + 1$. Recall that by our choice of $r$, we have $i \leqslant d$. After replacing $R'$ by the polynomial ring
  $R = R'[x_1, \ldots, x_{d-i}]$ and $B'$ by $B = B' \otimes_{R'} R$,
  we arrive an an Azumaya algebra $B$ over $R$ such that $\Kdim R = d$ and $\gen_{R_{\CC}}(B_{\CC}) = \gen_{R'_{\CC}}(B'_{\CC}) \ge r + 1$. 
  In particular, $\gen_{R}(B) \geqslant r + 1$; see Lemma~\ref{lem:requireForLefschetz}. This completes
  the proof of Theorem~\ref{thm.azumaya}(b) in the case, where $k = \QQ$.
  
  In the general case, where $k$ is an arbitrary field of characteristic $0$ and not necessarily embedded in $\CC$, we invoke the Lefschetz principle of 
  Lemma~\ref{lem:requireForLefschetz}. Consider the ring $R$
  and Azumaya algebra $B$ produced in the case of $\QQ$. Set $R_k = R \otimes_{\QQ} k$ and $B_k = R \otimes_{\QQ} k$.
  Clearly $\Kdim R_k = \Kdim R = d$. Moreover,
  \[ \gen_{R_k}(B_k) \ge \gen_{R_\CC}(B_\CC) \ge r+1 =\Floor{ \frac{d}{2(s-1)} } + 2  \] 
  by Lemma \ref{lem:requireForLefschetz}(c). 
\end{proof}

\appendix
\section{The equivariant Thom--Gysin sequence for varieties}

\label{sect.topological-preliminaries}

This appendix proves a number of results about the Borel-equivariant singular cohomology of a $k$-variety,
$\Hoh^*_G(X, \ZZ)$, when $k \subset \CC$. These are used in Sections \ref{s:equivCohoUr}, \ref{sect.main3}
and \ref{sect.azumaya(b)}. Many of them are well known, and we observe in passing some of the places in the literature
where they are used, but clear statements and proofs are not easy to find.

We have made use of the following technical lemma in a few places in the text, and will use it again in this appendix.
\begin{lemma} \label{lem:localizationIso}
  Let $X$ be a smooth variety. If $Z \to X$ is a possibly-nonsmooth closed subvariety of codimension
  $d$---that is, if each irreducible component of $Z$ is of codimension at least $d$ in $X$---and if $U = X \sm Z$, then
  the pullback map $\Hoh^i(X) \to \Hoh^i(U)$ is an isomorphism when $i<2d-1$ and is injective when $i=2d-1$.
\end{lemma}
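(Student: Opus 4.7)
The plan is to reduce the lemma to a vanishing statement for cohomology with supports and then to handle the singularities of $Z$ by induction on dimension. First, the long exact sequence of the pair $(X,U)$ in singular cohomology,
\[ \ldots \to \Hoh^i(X,U) \to \Hoh^i(X) \to \Hoh^i(U) \to \Hoh^{i+1}(X,U) \to \ldots, \]
shows that it suffices to prove $\Hoh^i(X,U)\iso\Hoh^i_{Z(\CC)}(X(\CC)) = 0$ whenever $i\le 2d-1$, since this gives both the injectivity and surjectivity claims in the stated range.

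The heart of the argument is the following assertion, which I would prove by induction on $\dim Z$: if $Z\hookrightarrow X$ is a closed subvariety of a smooth variety $X$, each of whose irreducible components has codimension at least $d$ in $X$, then $\Hoh^i_Z(X)=0$ for $i<2d$. The base case is when $Z$ is smooth (in particular, when $\dim Z = 0$). In that case each component $Z_j$ of $Z$ admits an open tubular neighbourhood in $X(\CC)$ (by the tubular neighbourhood theorem applied to the closed smooth submanifold $Z_j(\CC)\hookrightarrow X(\CC)$), and the classical Thom isomorphism gives $\Hoh^i_{Z_j}(X)\iso\Hoh^{i-2d_j}(Z_j)$, where $d_j\ge d$ is the (real-)codimension of $Z_j(\CC)$ in $X(\CC)$, divided by two. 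Hence $\Hoh^i_Z(X)=\bigoplus_j\Hoh^i_{Z_j}(X)$ vanishes whenever $i<2d$.

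For the inductive step, let $Z^{\mathrm{sing}}$ be the singular locus of $Z$, which is a closed subvariety of $X$ strictly smaller in dimension than $Z$; in particular, each irreducible component of $Z^{\mathrm{sing}}$ has codimension at least $d+1$ in $X$. The open complement $Z^{\mathrm{sm}} := Z\sm Z^{\mathrm{sing}}$ is smooth and closed in the smooth variety $X\sm Z^{\mathrm{sing}}$, with each irreducible component of codimension at least $d$. The localization long exact sequence for the pair of supports $Z^{\mathrm{sing}}\subseteq Z$ in $X$,
\[ \Hoh^i_{Z^{\mathrm{sing}}}(X) \to \Hoh^i_Z(X) \to \Hoh^i_{Z^{\mathrm{sm}}}(X\sm Z^{\mathrm{sing}}) \to \Hoh^{i+1}_{Z^{\mathrm{sing}}}(X), \]
together with the inductive hypothesis (which kills the outer terms for $i<2(d+1)$, in particular for $i\le 2d$) and the smooth case applied to $Z^{\mathrm{sm}}\hookrightarrow X\sm Z^{\mathrm{sing}}$ (which kills the middle term for $i<2d$), forces $\Hoh^i_Z(X)=0$ for $i<2d$, completing the induction.

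The main obstacle is verifying the smooth base case in the required generality, i.e., that one genuinely has the Thom isomorphism for a possibly disconnected, possibly non-compact smooth closed submanifold $Z(\CC)\hookrightarrow X(\CC)$ with components of varying codimensions. Once the tubular neighbourhood is in hand, this reduces to the oriented Thom isomorphism for the normal bundle, using that complex manifolds are canonically oriented. Everything else in the argument is formal manipulation of long exact sequences.
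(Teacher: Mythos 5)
Your argument is correct, but it follows a genuinely different route from the paper's. The paper disposes of the lemma via Borel--Moore homology: the localization sequence relating $\Hoh^{\mathrm{BM}}_*(Z)$, $\Hoh^{\mathrm{BM}}_*(X)$ and $\Hoh^{\mathrm{BM}}_*(U)$, the vanishing of $\Hoh^{\mathrm{BM}}_i(Z)$ for $i$ above the real dimension of $Z$ (which holds with no smoothness hypothesis on $Z$ whatsoever), and Poincar\'e duality $\Hoh^{\mathrm{BM}}_i(X)\iso \Hoh^{2\dim X-i}(X)$ on the smooth varieties $X$ and $U$; the singularities of $Z$ are thus absorbed entirely by a crude dimension bound, and no stratification is needed. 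You instead stay in relative singular cohomology, reduce to the vanishing of $\Hoh^i_Z(X)$ for $i<2d$, prove that by purity (tubular neighbourhood plus the Thom isomorphism for the canonically oriented normal bundle) when $Z$ is smooth, and handle general $Z$ by induction on $\dim Z$ via the long exact sequence of the triple $(X,\,X\sm Z^{\mathrm{sing}},\,X\sm Z)$. The inductive step is sound: since subvarieties are reduced and $\operatorname{char} k=0$, every irreducible component of $Z^{\mathrm{sing}}$ is a proper closed subset of a component of $Z$ and hence has codimension at least $d+1$ in $X$, which is exactly what kills the outer terms of your localization sequence in the range $i\le 2d$. Your version buys self-containedness --- only excision, the Thom isomorphism and long exact sequences, with no appeal to duality or Borel--Moore theory --- at the cost of the stratification argument; the paper's version buys brevity.
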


This is standard using Borel--Moore homology. Specifically, the exact sequence \cite[IX.2 (2.1)]{Iversen1986}
relates $\Hoh^{\rm BM}_\ast(X)$, $\Hoh^{\rm BM}_\ast(U)$ and $\Hoh^{\rm BM}_\ast(Z)$; the groups $\Hoh^{\rm BM}_i(Z)$ vanish unless $0
\le i \le \dim Z$; and there are isomorphisms $\Hoh_i^{BM}(X) \iso \Hoh^{\dim X - i}(X)$ and similarly for $U$.

\subsection*{The Thom--Gysin sequence}

If $i \colon Z \to X$ is the inclusion of a closed smooth subvariety of constant codimension $c$ and if
$j \colon X \sm Z \to X$ is the inclusion of the complement, then there is a long exact \textit{Thom--Gysin sequence}
\begin{equation}\label{eq:thgy}
  \xymatrix{ \cdots \ar[r] & \Hoh^{*}(X\sm Z) \ar@{->}[r]^{\bd} & \Hoh^{*-2c}(Z) \ar^{i_*}[r] & \Hoh^*(X) \ar@{->}[r]^{j^* \; \; } & \Hoh^*(X \sm Z) \ar@{->}[r]^{\; \; \bd}
    & \cdots. } 
\end{equation}
This sequence is widely known, appearing in ~\cite{Gordon1975}, \cite[p.~167]{Mumford1994} for instance. The special case where $X$ is the
total space of a vector bundle and $Z$ the zero section is the Gysin sequence of \cite[Theorem
12.2]{Milnor1974}. In the general case, one can find tubular neighbourhood $\nu$ of $Z$ in $X$, and deduce the
Thom--Gysin sequence for $Z$ in $X$ from the case of $Z$ in $\nu$ using the excision theorem.
Alternatively, one may derive the sequence \eqref{eq:thgy} from the long exact sequence of local cohomology
\cite[Prop.~II.9.2]{Iversen1986} along with the isomorphism $\Hoh^{\ast}_Z(X; \ZZ) \to \Hoh^{\ast-2c}(Z; \ZZ)$ implied
by \cite[Theorem IX.4.7]{Iversen1986}.

In order to articulate the naturality of the Thom--Gysin sequence, we recall the concept of a transversal square. Let
  \begin{equation} \label{diag:transversal}
    \xymatrix{  Z' \ar@{->}[r]^{i'} \ar@{->}[d]_f &  X' \ar@{->}[d]^g \\ Z  \ar@{->}[r]^i & X}
  \end{equation}
 be a cartesian square of smooth varieties, where $i$ is a closed embedding. Let $N'$ denote the normal bundle of
 $Z'$ in $X'$ and $N$ the normal bundle of $Z$ in $X$. Recall from \cite[\S
 17.13]{Grothendieck1967} that the square is \textit{transversal} if the induced map
 $N' \to f^* N$ is an isomorphism.

The most important case is when $X' \to X$ and $Z \to X$ are both closed
embeddings. In this case, $X'$ and $Z$ are said to \textit{intersect transversely}, and $Z'$ is
their transverse intersection.

Intersecting transversely is a condition on normal bundles. Using the normal exact sequence
\cite[p.~182]{HartshorneAlgebraicGeometry1977}, we can say the intersection is transverse if
\[ \dim T_p X = \dim T_p X' + \dim T_p Z - \dim T_p Z' \]
for all closed points $p$ of the smooth variety $Z'$.

\begin{remark} \label{rem:transverseFFP} A square is transversal if a certain map of locally free sheaves is an
  isomorphism. This holds for the square defined by $Z\subseteq X$ and $X' \to X$ if and only if it holds for the square
  obtained by pulling back along a faithfully flat map $U \to X$.
\end{remark}

If we are given a transversal square, then there is an induced natural transformation of Thom--Gysin sequences.
\begin{equation} \label{eq:transversalLadder}
  \xymatrix{ \cdots \ar[r] & \Hoh^{*}(X\sm Z) \ar[d] \ar^\bd[r] & \Hoh^{*-2c}(Z) \ar^{i_*}[r] \ar^{f^*}[d]  & \Hoh^*(X) \ar^{j^*}[r]  \ar^{g^*}[d] & \Hoh^*(X
    \sm Z) \ar^{\bd}[r] \ar[d] 
    & \cdots \\ \cdots
    \ar[r] & \Hoh^*( X' \sm Z') \ar^\bd[r] & \Hoh^{* - 2c}(Z') \ar^{i'_*}[r] & \Hoh^*(X') \ar[r] & \Hoh^*(X' \sm Z') \ar[r] & \cdots} 
\end{equation}
This and other useful properties of Thom--Gysin sequence are listed in \cite[Section 2.2]{Panin2009}. 

\begin{remark} \label{rem:chern}
  If $s: X \to V$ is the zero-section of a rank-$m$ vector bundle on a smooth variety, then the composite of the Gysin map $s_*$ with $s^*$
  \[\Hoh^0(X) \overset{s_*}\to \Hoh^{2m}(V) \overset{s^*}\to \Hoh^{2m}(X)\]
  sends $1_X$ to the Euler class $e(V)$. We sketch the argument: In the case of a zero-section of a bundle, the definition in \cite[Section 2.2]{Panin2009} of the
  Gysin map coincides with a composite of taking a cup-product with the Thom class of the bundle, followed by extension
  of support:
  \[ \xymatrix{ \Hoh^{*-2m}(X) \ar^{\smile \tau}[r] & \Hoh^*_X(V) \ar[r] & \Hoh^*(V)}. \]
  Composing further with $s^*$ gives the definition of Euler class in \cite[Definition 12.1]{Bredon1993}.

  It is also the case that $\Hoh^*(\PP^n) \iso \ZZ[\theta]/(\theta^{n+1})$, where $\theta$ is the first
  Chern class of $\sh O(1)$, so that $|\theta| = 2$.
\end{remark}

\subsection*{Equivariant Thom--Gysin sequence}

  There exists an equivariant Thom--Gysin sequence. Versions of this appear in \cite[p.~170]{Kirwan1984} and
  \cite{Crooks2016}, for instance. In its Borel--Moore homology incarnation, it is implied by the remarks
  on \cite[Sec.~2.8]{EdidinEquivariantintersectiontheory1998}. In fact, this appendix consists of an explication of some of what is implicit in \cite[Sec.~2.8]{EdidinEquivariantintersectiontheory1998}.

  Let $G$ be an algebraic group.
  If $Z \to X$ is a closed $G$-invariant embedding of constant codimension $c$ where $Z$ and $X$ are both smooth varieties, then
  in any range of dimensions we may find some representation $V$ of $G$ and open subspace $U \subset V$ in order to
  calculate both $\Hoh^i_G(Z) = \Hoh^i(Z \times^G U)$ and $\Hoh^*_G(X) =\Hoh^i(X \times^G U)$. The inclusion
  $(Z \times U)/G \to (X \times U)/G$ is a closed embedding of smooth varieties, again of codimension $c$. This means
  there is a Gysin map
  \[ \Hoh^{*-2c}_G(Z) \to \Hoh^*_G(X) \]
  exactly as in the non-equivariant case. 

\begin{lemma} \label{lem:transverseEquivariant} Let $G$ be an algebraic group acting freely on a smooth variety $X$ in
  such a way that $X \to X/G$ is a $G$-torsor in the category of smooth varieties. Suppose $Y$ and $Z$ are $G$-invariant
  closed smooth subvarieties of $X$ that intersect transversely. Then $Y/G$ and $Z/G$ intersect transversely in $X/G$.
\end{lemma}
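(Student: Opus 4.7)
The plan is to reduce the statement to the given hypothesis by applying Remark~\ref{rem:transverseFFP} along the faithfully flat map $X \to X/G$.

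First I would observe that since $G$ acts freely on the smooth variety $X$ with $X \to X/G$ a $G$-torsor (in particular smooth and faithfully flat), and since $Y, Z \subseteq X$ are $G$-invariant and smooth, the quotients $Y/G, Z/G$ are smooth closed subvarieties of the smooth variety $X/G$; moreover the squares
\[
\begin{tikzcd}
Y \arrow[r] \arrow[d] & X \arrow[d] \\
Y/G \arrow[r] & X/G
\end{tikzcd}
\qquad
\begin{tikzcd}
Z \arrow[r] \arrow[d] & X \arrow[d] \\
Z/G \arrow[r] & X/G
\end{tikzcd}
\]
are cartesian (a $G$-invariant closed subvariety of $X$ is recovered as the pullback of its image in $X/G$).

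Next I would consider the square of interest, namely
\begin{equation*}
\begin{tikzcd}
(Y/G)\cap (Z/G) \arrow[r] \arrow[d] & Z/G \arrow[d] \\
Y/G \arrow[r] & X/G,
\end{tikzcd}
\end{equation*}
and pull it back along the faithfully flat morphism $\pi\colon X \to X/G$. Using the two cartesian squares above, the pullback of $Y/G \to X/G$ along $\pi$ is $Y \to X$, the pullback of $Z/G \to X/G$ is $Z \to X$, and consequently the pullback of the intersection $(Y/G)\cap(Z/G) \to X/G$ is $Y \cap Z \to X$. So the pulled-back square is precisely
\[
\begin{tikzcd}
Y \cap Z \arrow[r] \arrow[d] & Z \arrow[d] \\
Y \arrow[r] & X,
\end{tikzcd}
\]
which is transversal by hypothesis. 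By Remark~\ref{rem:transverseFFP}, since transversality of a square (which amounts to the vanishing/isomorphism condition on a certain map of locally free sheaves) can be tested after faithfully flat base change, the original square is transversal as well; equivalently, $Y/G$ and $Z/G$ intersect transversely in $X/G$.

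The only potential subtlety is ensuring that Remark~\ref{rem:transverseFFP} genuinely applies here: it requires that we identify the pullback of the square defined by the two closed embeddings $Y/G \hookrightarrow X/G$ and $Z/G \hookrightarrow X/G$ with the corresponding square for $Y, Z$ in $X$, which is immediate from the cartesian squares above. Nothing in the argument is technical beyond this identification, so no step should pose serious difficulty.
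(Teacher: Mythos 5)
Your argument is correct and is essentially the paper's own proof: the paper likewise checks transversality of $Y/G$ and $Z/G$ after faithfully flat pullback along $X \to X/G$, identifies the pulled-back square with the square for $Y$ and $Z$ in $X$, and invokes Remark~\ref{rem:transverseFFP}. You have simply spelled out the cartesian-square identifications that the paper leaves implicit.
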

\begin{proof}
  By using the observations of Remark \ref{rem:transverseFFP}, we can check for the transversality of $Y/G$ and $Z/G$
  after faithfully flat pullback to $X$, but these pullbacks are $Y$ and $Z$, which intersect transversely.
\end{proof}

\begin{lemma} \label{lem:GysinTransversality}
  Let $G$ be an algebraic group acting on a smooth variety $X$. Suppose $Y$ and $Z$ are smooth closed $G$-subvarieties
  intersecting transversely, and $Y$ is of constant codimension $c$. Then there is a natural transformation of equivariant Thom--Gysin sequences
  \begin{equation*}
  \xymatrix{  \ar[r] & \Hoh_G^{*}(X\sm Y) \ar[d] \ar^\bd[r] & \Hoh_G^{*-2c}(Y) \ar[r] \ar[d]  & \Hoh_G^*(X) \ar^-{j^*}[r]  \ar[d] & \Hoh_G^*(X
    \sm Y) \ar^-{\bd}[r] \ar[d] 
    &  \\ 
  \ar[r] & \Hoh_G^*(Z \sm (Y \cap Z)) \ar^\bd[r] & \Hoh_G^{*-2c}(Y \cap Z) \ar[r] & \Hoh_G^*(Z) \ar[r] & \Hoh_G^*(Z \sm (Y \cap Z)) \ar[r] & } 
\end{equation*}
\end{lemma}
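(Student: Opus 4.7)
The plan is to reduce Lemma~\ref{lem:GysinTransversality} to the non-equivariant naturality statement \eqref{eq:transversalLadder} by means of the Totaro--Edidin--Graham approximation of $BG$, together with Lemma~\ref{lem:transverseEquivariant} to carry transversality through the quotient.

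First, fix a degree $n$ in which we wish to establish the ladder. Using Remark~\ref{rem.approximation}, choose a linear representation $G \to \GL(V)$ and a $G$-invariant open $U \subseteq V$ such that $G$ acts freely on $U$, the quotient $U \to U/G$ is a $G$-torsor in the category of schemes, and $V \setminus U$ has codimension exceeding $n/2 + c$. Forming products, the $G$-actions on $X \times U$, $Y \times U$, $Z \times U$, and $(Y \cap Z) \times U$ are all free with quotients that are smooth varieties (smoothness of $G$ being ensured in char $0$; in general one replaces $(\cdot \times U)/G$ by its coarse quotient and notes that the Borel construction only uses the homotopy type of $U(\C)$). By \eqref{eq:11}, the cohomology of each of these quotients agrees with the corresponding equivariant cohomology group in degrees $\leqslant n$.

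Next, I would verify the hypotheses of the non-equivariant Thom--Gysin ladder for the quotients. The inclusion $(Y \times U)/G \hookrightarrow (X \times U)/G$ is a closed embedding of smooth varieties of the same constant codimension $c$, since codimension is preserved under the faithfully flat pullback $X \times U \to (X \times U)/G$. By Lemma~\ref{lem:transverseEquivariant} applied to the $G$-torsor $X \times U \to (X \times U)/G$ and to the $G$-invariant subvarieties $Y \times U$ and $Z \times U$ (which intersect transversely in $X \times U$ because $Y$ and $Z$ do in $X$ and $U$ contributes trivially to the normal bundles), the subvarieties $(Y \times U)/G$ and $(Z \times U)/G$ intersect transversely in $(X \times U)/G$, with transverse intersection $((Y \cap Z) \times U)/G$.

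We may therefore invoke \eqref{eq:transversalLadder} for the transversal square of smooth varieties
\[
\begin{tikzcd}
((Y \cap Z) \times U)/G \rar \dar & (Z \times U)/G \dar \\
(Y \times U)/G \rar & (X \times U)/G
\end{tikzcd}
\]
to obtain a natural map of non-equivariant Thom--Gysin long exact sequences. By our choice of $U$, in degrees $\leqslant n$ this ladder is exactly the desired ladder of equivariant Thom--Gysin sequences.

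Finally, I would check that the ladder so produced is independent of the choice of $(V, U)$: given two such approximations, one embeds both into a common larger one (e.g.\ the direct sum with a third suitable representation) and uses that the induced maps on cohomology are isomorphisms in the relevant range, so the ladders match up. Letting $n \to \infty$ through a cofinal system of representations assembles the ladders into a single ladder of equivariant Thom--Gysin sequences, proving the lemma. The main point requiring care is the transversality step; but this is exactly what Lemma~\ref{lem:transverseEquivariant} is designed to supply, so the argument is essentially mechanical once the approximation has been set up.
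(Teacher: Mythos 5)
Your proposal is correct and follows essentially the same route as the paper's proof: approximate each equivariant cohomology group by the cohomology of $(\,\cdot\times U)/G$ for a suitable Totaro--Edidin--Graham pair $(V,U)$, transfer transversality to the quotients via Lemma~\ref{lem:transverseEquivariant}, and then invoke the non-equivariant naturality of the Thom--Gysin sequence. You supply some bookkeeping (the codimension bound on $V\setminus U$ and the independence of the choice of $(V,U)$) that the paper leaves implicit, but the argument is the same.
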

\begin{proof}
  The groups $\Hoh_G^i$ may be calculated by choosing a $G$-representation $V$ and an open subset $U$ where $U \to U/G$ is a
  $G$-torsor and $V \sm U$ has sufficiently high codimension. Then $\Hoh^i_G(X)$  is the cohomology of $(X \times U)/G$ and
  similarly for the other varieties. It
  suffices therefore to prove that $(Y \times U)/G$ and $(Z \times U)/G$ intersect transversely in $(X \times U)/G$, but
  this is implied by Lemma \ref{lem:transverseEquivariant}.
\end{proof}

\begin{lemma} \label{lem:GysinSubgroup}
  Let $G$ be an algebraic group and $H$ a subgroup. Suppose $Z$ is a $G$-invariant closed smooth subvariety of a smooth $G$-variety
  $X$ of constant codimension $c$. Then there is a natural transformation of equivariant Thom--Gysin sequences:
   \begin{equation*}
  \xymatrix{ \cdots \ar[r] & \Hoh_G^{*}(X\sm Z) \ar[d] \ar^\bd[r] & \Hoh_G^{*-2c}(Z) \ar[r] \ar[d]  & \Hoh_G^*(X) \ar^-{j^*}[r]  \ar[d] & \Hoh_G^*(X
    \sm Z) \ar^-{\bd}[r] \ar[d] 
    & \cdots \\ \cdots
  \ar[r] & \Hoh_H^*(X \sm Z) \ar^\bd[r] & \Hoh_H^{*-2c}(Z) \ar[r] & \Hoh_H^*(X) \ar[r]^-{j^*} & \Hoh_H^*(X-Z) \ar[r]^-{\partial} & \cdots} 
\end{equation*}
\end{lemma}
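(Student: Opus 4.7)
The plan is to realize the desired ladder as the naturality square \eqref{eq:transversalLadder} attached to a transversal square of smooth varieties that is obtained by applying a common algebraic Borel construction to both $X$ and $Z$.

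First, I would fix an integer $n$ and work in cohomological degrees $\leqslant n$; the general assertion follows by letting $n$ grow. Using the Totaro--Edidin--Graham construction recalled in Remark~\ref{rem.approximation}, I would choose a $G$-representation $W$ and a $G$-invariant open subvariety $P \subseteq W$ so that $P \to P/G$ is a $G$-torsor in smooth varieties and $W \setminus P$ has codimension exceeding $n + c$. Since $H \subseteq G$, the same $P$ gives an $H$-torsor $P \to P/H$, and $W \setminus P$ still has large codimension as an $H$-subvariety. In the range of degrees $\leqslant n$ one then has
\[ \Hoh^{*}_G(Y) = \Hoh^{*}((Y \times P)/G) \qquad \text{and} \qquad \Hoh^{*}_H(Y) = \Hoh^{*}((Y \times P)/H) \]
for $Y \in \{X,\, Z,\, X \setminus Z\}$, together with the corresponding Thom--Gysin sequences attached to the codimension-$c$ closed embeddings $(Z \times P)/G \hookrightarrow (X \times P)/G$ and $(Z \times P)/H \hookrightarrow (X \times P)/H$.

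Next, I would consider the commutative square of smooth varieties
\[
\xymatrix{(Z \times P)/H \ar@{^{(}->}[r] \ar[d]_{\pi_Z} & (X \times P)/H \ar[d]^{\pi_X} \\ (Z \times P)/G \ar@{^{(}->}[r] & (X \times P)/G.}
\]
The vertical maps $\pi_Z$ and $\pi_X$ are $G/H$-bundles, hence smooth, and the square is Cartesian because $(Z \times P)/H$ is identified with the pullback of $(X \times P)/H \to (X \times P)/G$ along the lower horizontal inclusion (both are the same $G/H$-bundle over $(Z \times P)/G$). Since smooth pullback preserves normal bundles, the induced map from the normal bundle of $(Z\times P)/H \hookrightarrow (X\times P)/H$ to the pullback of the normal bundle of $(Z \times P)/G \hookrightarrow (X \times P)/G$ is an isomorphism, so the square is transversal in the sense of \eqref{diag:transversal}.

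Applying the naturality of the Thom--Gysin sequence for transversal squares, i.e.\ diagram \eqref{eq:transversalLadder}, to this square produces the desired ladder of long exact sequences in degrees $\leqslant n$. The main (and essentially only) technical point is to verify the smoothness of the $G/H$-bundle projections $\pi_X$ and $\pi_Z$, after which transversality is automatic from the normal bundle formalism. Letting $n$ grow, and noting that enlarging $W$ and $P$ to a larger Borel approximation produces compatible ladders, one obtains the natural transformation of Thom--Gysin sequences in all degrees, as claimed.
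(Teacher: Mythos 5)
Your proposal is correct and follows essentially the same route as the paper: both reduce the statement to the transversality of the square of Borel-construction quotients $(Z\times P)/H \to (X\times P)/H$ over $(Z\times P)/G \to (X\times P)/G$ and then invoke the naturality ladder \eqref{eq:transversalLadder}. The only (cosmetic) difference is in how transversality is verified --- the paper base-changes faithfully flatly along $(X\times P)\to (X\times P)/G$ so that the square becomes a product with $G/H$ and is ``obviously'' transversal, whereas you observe directly that the square is Cartesian with smooth vertical $G/H$-bundle projections, so the normal bundles pull back isomorphically.
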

\begin{proof}
  Letting $U$ be as in the proof of
  Lemma~\ref{lem:GysinTransversality},
  it suffices to prove that
  \begin{equation*}
    \xymatrix{ (Z \times U)/H \ar[r] \ar[d] & (X\times U)/H \ar[d] \\ (Z \times U)/G \ar[r] & (X \times U)/G }
  \end{equation*}
  is transversal. This can be verified after faithfully flat base change along $(X \times U) \to (X \times U)/G$,
  whereupon it is isomorphic to 
\begin{equation*}
    \xymatrix{ Z \times U \times (G/H) \ar[r] \ar[d] & X\times U \times (G/H) \ar[d] \\ Z \times U \ar[r] & X \times U. }
  \end{equation*}
  The maps are either projections or closed embeddings. This is obviously transversal.
\end{proof}

\begin{lemma} \label{lem:localizationIsoEquiv}
  Let $X$ be a smooth variety equipped with a $G$-action. If $Z \to X$ is a possibly-nonsmooth closed $G$-subvariety of
  codimension $d$ and if $U = X - Z$, then the pullback map $\Hoh^i_G(X) \to \Hoh^i_G(U)$ is an isomorphism when
  $i < 2d-1$ and is injective when $i=2d-1$.  
\end{lemma}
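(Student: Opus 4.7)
The plan is to reduce Lemma \ref{lem:localizationIsoEquiv} to the non-equivariant Lemma \ref{lem:localizationIso} by means of the Totaro--Edidin--Graham approximation of the Borel construction, exactly as in the proofs of Lemmas \ref{lem:GysinTransversality} and \ref{lem:GysinSubgroup}.

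First, invoking Remark \ref{rem.approximation}, I would choose a finite-dimensional $G$-representation $V$ together with a $G$-invariant open subset $W \subseteq V$ such that $W \to W/G$ is a $G$-torsor (in particular, $G$ acts freely on $W$) and the complement $V \setminus W$ has codimension at least $d$ in $V$, which is strictly greater than $(2d-1)/2$. The standard comparison \eqref{eq:11} then ensures that for every $j \leqslant 2d-1$ the natural maps
\[ \Hoh^j_G(X) \isomto \Hoh^j\bigl((X \times W)/G\bigr) \quad \text{and} \quad \Hoh^j_G(U) \isomto \Hoh^j\bigl((U \times W)/G\bigr) \]
are isomorphisms.

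Next, since $G$ acts freely on $W$, it acts freely on $X \times W$ and on $Z \times W$ via the diagonal action. The quotient $(X \times W)/G$ is a smooth algebraic space, because a free action of a smooth group scheme on a smooth variety produces a smooth quotient. Meanwhile $(Z \times W)/G$ sits inside $(X \times W)/G$ as a closed subspace whose every irreducible component has codimension at least $d$: indeed $Z \times W$ has this property in $X \times W$, and the codimension is preserved under the smooth free quotient map. The open complement of $(Z \times W)/G$ in $(X \times W)/G$ is precisely $(U \times W)/G$.

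Finally, I would apply Lemma \ref{lem:localizationIso} to the pair $(Z \times W)/G \hookrightarrow (X \times W)/G$, which gives that the pullback $\Hoh^j((X \times W)/G) \to \Hoh^j((U \times W)/G)$ is an isomorphism when $j < 2d-1$ and an injection when $j = 2d-1$. Transporting this across the identifications of the first paragraph yields the stated behaviour of $\Hoh^j_G(X) \to \Hoh^j_G(U)$. The only technical point requiring care is verifying that Lemma \ref{lem:localizationIso} is genuinely applicable to $(X \times W)/G$: it requires only that the ambient space be smooth, which is guaranteed above, while the possible non-smoothness of $Z$ (and hence of $(Z \times W)/G$) is immaterial, since Lemma \ref{lem:localizationIso} imposes no smoothness condition on the closed subvariety.
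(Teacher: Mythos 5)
Your proposal is correct and follows essentially the same route as the paper: both approximate $\Hoh^*_G$ via a representation $V$ with a $G$-invariant open $W$ whose complement has large codimension, and then apply Lemma \ref{lem:localizationIso} to the closed embedding $(Z\times W)/G \hookrightarrow (X\times W)/G$. Your write-up simply makes explicit a few points (the degree range in \eqref{eq:11}, smoothness of the quotient, preservation of codimension) that the paper's two-sentence proof leaves implicit.
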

\begin{proof}
  Choose a $G$-representation $V$ and an open subset $U$ such that $U \to U/G$ is a $G$-torsor and $V\sm U \into V$ has
  codimension greater than $d$. The result now follows by applying Lemma \ref{lem:localizationIso} to $(Z \times U)/ G
  \to (X \times U)/ G$.
\end{proof}

{\setcounter{biburlnumpenalty}{100} \printbibliography}
\end{document}
